\documentclass[a4paper,11pt]{article}

%% Language and font encodings
\usepackage[english]{babel}
\usepackage[utf8x]{inputenc}
\usepackage[T1]{fontenc}
\usepackage[left, mathlines]{lineno}

\usepackage{placeins}
\usepackage{float}
\restylefloat{table}
\restylefloat{figure}

%% Sets page size and margins
\usepackage[a4paper,top=2cm,bottom=2cm,left=2cm,right=2cm,marginparwidth=1.75cm]{geometry}

%% Useful packages
\usepackage{lmodern}
\usepackage[skip = 0pt]{caption}
\usepackage[aboveskip = -2pt]{subcaption}
\usepackage{mathtools, amsthm, amssymb}
\usepackage[dvipsnames]{xcolor}
\usepackage{stackengine, graphicx}
\usepackage{array}
\usepackage{calc}
\usepackage{dsfont}
\usepackage[nottoc, notlof, notlot]{tocbibind}
\usepackage{stmaryrd}

\usepackage{amsmath}
\usepackage{amsthm}
\usepackage{subcaption}
\usepackage{graphicx}
\usepackage[colorinlistoftodos]{todonotes}
\usepackage[colorlinks=true, allcolors=blue]{hyperref}
\usepackage{float}

\usepackage{authblk} %Pour les auteurs

\allowdisplaybreaks[4] %pour que les equations occupent plusieurs pages
\binoppenalty=\maxdimen
\relpenalty=\maxdimen

\theoremstyle{plain}
\newtheorem{thm}{Theorem}
\newtheorem{lem}[thm]{Lemma}
\newtheorem{prop}[thm]{Proposition}

\newtheorem{definition}[thm]{Definition}

\newtheorem{rem}[thm]{Remark}

\theoremstyle{remark}

\numberwithin{thm}{section}
\numberwithin{equation}{section}

\newenvironment{AMS}{\textbf{\textit{MSC 2020 Subject Classification:}}}{}
\newenvironment{keywords}{\textbf{\textit{Keywords:}}}{}
\newenvironment{acknowledgements}{\textbf{Acknowledgements}}{}
\newenvironment{headline}{\textbf{\textit{Running headline:}}}{}

%Definition des macros utilisees dans ce document
\newcommand{\esp}{\mathbb{E}}
\newcommand{\proba}{\mathbb{P}}
\newcommand{\nmath}{\mathbb{N}}
\newcommand{\zmath}{\mathbb{Z}}

\newcommand{\rmath}{\mathbb{R}}

\newcommand{\bP}{\mathbf{P}}
\newcommand{\bE}{\mathbf{E}}

\newcommand{\rde}{\mathbb{R}^{2}}

\newcommand{\fcal}{\mathcal{F}}

\newcommand{\lcal}{\mathcal{L}}

\newcommand{\bcal}{\mathcal{B}}
\newcommand{\ecal}{\mathcal{E}}
\newcommand{\rcal}{\mathcal{R}}
\newcommand{\mcal}{\mathcal{M}}

\newcommand{\vcal}{\mathcal{V}}
\newcommand{\gcal}{\mathcal{G}}
\newcommand{\ccal}{\mathcal{C}}

\newcommand{\bfrak}{\mathfrak{B}}

\newcommand{\Vol}{\mathrm{Vol}}

\newcommand{\un}[1]{\mathds{1}_{\{#1\}}}

\newcommand{\dhor}{\sqrt{a^{2}\cos^{2}(\gamma)+b^{2}\sin^{2}(\gamma)}}

\newcommand{\tabg}{\mathbf{E}[D_{1}]}
\newcommand{\xte}{X_{t}^{express}}
\newcommand{\nte}{N_{t}^{express}}

\newcommand{\rtwo}{\mathbb{R}^{2}}
\newcommand{\ml}{\mcal_{\lambda}}
\newcommand{\setzeroun}{\{0,1\}}
\newcommand{\statespace}{D_{\ml}[0,+\infty)}
\newcommand{\lk}{\lcal_{\mu}^{k}}
\newcommand{\linfty}{\lcal_{\mu}^{\infty}}
\newcommand{\hp}{H}
\newcommand{\limeps}{\lim\limits_{\epsilon \to 0}}
\newcommand{\Jm}{J_{\mu}}

\title{Measure-valued growth processes in continuous space and growth properties starting from an infinite interface}

\author[1]{Apolline Louvet}
\author[2]{Amandine V{\'e}ber\footnote{Corresponding author}}

\affil[1]{University of Bath, Department of Mathematical Sciences, Bath BA2 7AY, UK}
\affil[2]{Universit\'e Paris Cit\'e, CNRS, MAP5, 75006 Paris, France
\vspace{0.5cm}

\begin{flushleft}
\footnotesize{Email addresses : apolline.louvet@polytechnique.edu, amandine.veber@parisdescartes.fr}
\end{flushleft}}

\begin{document}
\maketitle

\begin{abstract}
The $k$-parent and infinite-parent spatial Lambda-Fleming Viot processes (or SLFV), introduced in~\cite{louvet2023}, form a family of stochastic models for spatially expanding populations. These processes are akin to a continuous-space version of the classical Eden growth model (but with local backtracking of the occupied area allowed when $k$ is finite), while being associated with a dual process encoding ancestry.

In this article, we focus on the growth properties of the area occupied by individuals of type~$1$ (type $0$ encoding units of empty space). To do so, we first define the quantities that we shall use to quantify the speed of growth of the occupied area. Using the associated dual process and a comparison with a first-passage percolation problem, we show that the growth of the occupied region in the infinite-parent SLFV is linear in time. Because of the possibility of local backtracking of the occupied area, the result we obtain for the $k$-parent SLFV is slightly weaker. It gives an upper bound on the probability that a given location is occupied at time $t$, which also shows that growth in the $k$-parent SLFV is linear in time. We use numerical simulations to approximate the growth speed for the infinite-parent SLFV, and we observe that the actual speed may be higher than the speed expected from simple first-moment calculations due to the characteristic front dynamics.
\end{abstract}

\begin{headline}
Measure-valued growth processes
\end{headline}

\begin{keywords}
spatial Lambda-Fleming-Viot processes, stochastic growth process, duality, sub-additivity, growth speed
\end{keywords}

\begin{AMS}
  \textit{Primary:} 60G55, 60G57, 82C43, 92D25
  \textit{Secondary:} 60J25, 60G50, 60K20, 82C41
\end{AMS}

\tableofcontents

\section{Introduction}\label{sec:intro}

Our aim in this work is to connect two worlds: measure-valued stochastic models of population dynamics, and set-valued stochastic growth models. More precisely, we shall be interested in the relation between a family of models introduced in \cite{louvet2023}, called the $k$-parent and $\infty$-parent spatial $\Lambda$-Fleming-Viot processes (SLFV), and the processes describing the dynamics of the region occupied by a given type of individuals in such models, seen as a stochastic growth model in continuous space.

Our motivation for this is twofold. First, we want to show that techniques from (discrete-space) first-passage percolation theory can be used to understand the long term dynamics of the \textit{occupied region} in the $k$-parent and $\infty$-parent SLFV. Second, we want to point out that measure-valued formulations of stochastic growth models provide a natural way of defining these models with initially occupied areas of infinite volume, such as a half-plane, and then studying the impact on the expansion of having an interface of infinite length (and therefore nontrivial fluctuations after some time) between the occupied and unoccupied regions.

\subsection{A family of measure-valued models of population expansion}\label{subsec:intro1}
Measure-valued Markov processes are now classical tools to model and analyse the dynamics of spatially structured populations \cite{bansaye2015,barton2010new,etheridge2000,fournier2004}, in particular when individuals have different genetic or phenotypic characters potentially impacting their reproduction and migration rates. In \cite{barton2010new}, a model called the spatial $\Lambda$-Fleming Viot process was introduced, in which individuals are uniformly spread over some finite dimensional space ($\mathbb{R}^d$, say) and take their (genetic) types from a compact set $K$. At each time $t\geq 0$, the local type distribution at each $z\in \mathbb{R}^d$ is given by a probability distribution $\rho_t(z,d\kappa)$ on $K$, and the state of the whole population is described by the measure
\begin{equation}\label{eqn:M}
M_t(dz,d\kappa) = \rho_t(z,d\kappa)dz
\end{equation}
on $\mathbb{R}^d\times K$. The state of the process $(M_t)_{t\geq 0}$ is locally updated at the times of a Poisson point process of ``reproduction'' events specifying which region in $\mathbb{R}^d$ is affected, and what is the fraction of the local population which is updated during the event (with the interpretation that a fraction of the individuals in the area is replaced by offspring of an individual chosen to reproduce). This model comes with a set of tools which are extensions of classical tools from population genetics and which make it amenable to analysis. In particular, it is possible to find a \textit{dual} process, interpreted as the genealogical process of a sample of individuals, whose law is linked to the law of $(M_t)_{t\geq 0}$ through a duality relation which is of great use for the analysis of the long term behaviour of $(M_t)_{t\geq 0}$ itself. We refer to Section~4 in \cite{barton2010new} for more details, and to Section~\ref{sec:defn_slfv} below for the precise form of the dual processes and of the duality relations that will be of interest in the present work. While \cite{barton2010new} focuses on the \textit{neutral} case where the single type chosen to ``reproduce'' is sampled uniformly at random in the area of the region affected by the reproduction event, in \cite{etheridge2020rescaling} a bias in favour of the reproduction of one type (out of the two possible types $0$ and $1$ in that work) is introduced by sampling \textit{two} types during the events called \textit{selective}, and deciding that the ``offspring'' are of type $0$ if and only if both sampled types (or ``potential parents'') are 0. The corresponding process is called the \textit{SLFV with fecundity selection}, and its dual process can again be considered as tracing back all the potential parents which may have influenced the type of a finite number of individuals sampled in the population at time $t$ (see Section~1.2 in \cite{etheridge2020rescaling}). Observe from this informal description that if we start an SLFV process from a configuration where, at any location in space, we find only one type (that is, each $\rho_0(z,d\kappa)$ is a Dirac mass at some $\kappa(z)\in K$), and if we assume that the fraction of local population replaced in all events is $1$, then at any time $t\geq 0$ we find only one type at any given location. Thereby, we obtain a model in which space is partitioned into regions occupied by different types, and the boundaries of these regions evolve in time due to the Poisson point process of events. This observation yields a natural connection with models of stochastic geometry and set-valued processes, and strongly suggests that methods from these fields can be used to obtain results on the dynamics of the regions encoded by the measures $M_t$.

Focusing again on the case $K=\{0,1\}$, assuming that the fraction of local population replaced during all events is $1$ and replacing the sampling of $2$ types by the sampling of $k\geq 2$ types during each reproduction event, we obtain the $k$-parent SLFV introduced in \cite{louvet2023} to model the dynamics of an expanding population (type~1 individuals being ``existing individuals'' and type~0 individuals being considered as ``non-existing'', or ``ghosts''). From now on, we restrict our attention to the most biologically relevant dimension $d=2$. The alternative constructions of the process $(M^{(k)})_{t\geq 0}$ are recalled in Sections~\ref{subsec:defn_martingale_problem} and~\ref{subsec:poisson_construction}; here we satisfy ourselves with the following informal construction. Since $K=\{0,1\}$ and since we anticipate that only one type of individuals is present at every location in space at any time, let us write (in the notation of~\eqref{eqn:M})
\begin{equation}\label{eqn:w}
w_t^{(k)}(z) := \rho_t^{(k)}(z,\{1\})\in \{0,1\}, \qquad z\in \mathbb{R}^2,\ t\geq 0.
\end{equation}
For each given $t$, the function $w_t^{(k)}:z\mapsto w_t^{(k)}(z)$ can be thought of as the indicator function of the area in $\mathbb{R}^2$ occupied by individuals of type~1. However, as in all variants of the spatial $\Lambda$-Fleming-Viot process, the kernel $z\mapsto \rho_t^{(k)}(z,d\kappa)$ (and consequently $w_t^{(k)}$) is defined up to Lebesgue null sets (that is, changing the value of $\rho_t^{(k)}$ at a Lebesgue null set of points does not change the value of the measure $M_t^{(k)}$, which is the real object of interest). With this observation, we shall abuse the language and refer to $w_t^{(k)}$ as a \textit{density} for $M_t^{(k)}$. Motivated by experimental work on expanding populations of microorganisms (see in particular Figure~4 in \cite{hallatschek2007genetic}, in which the geometric properties of the reproduction events impact the diversity observed in the population), we suppose that each event has an elliptical shape encoded by a triplet $(a,b,\gamma)\in (0,+\infty)^2\times (-\pi/2,\pi/2)$ giving its width, height and angle with respect to the $x$-axis (see Figure~\ref{fig:illustr_ellipse}). We fix a finite measure $\mu$ on $(0,+\infty)^2\times (-\pi/2,\pi/2)$ with compact support $S$, and we let $\Pi$ be a Poisson point process on $\mathbb{R}_+\times \mathbb{R}^2\times S$ with intensity measure $dt\otimes dz\otimes \mu(da,db,d\gamma)$. We start from $M_0$ of the form~\eqref{eqn:M}, and for every $(t,z,a,b,\gamma)\in \Pi$:
\begin{itemize}
\item We sample $k$ types $\kappa_1,\ldots,\kappa_k\in\{0,1\}$ independently according to the type distribution $$
    \frac{1}{\mathrm{Vol}(\bfrak_{a,b,\gamma}(z))}\int_{\bfrak_{a,b,\gamma}(z)} M_{t-}^{(k)}(dy,d\kappa) $$ at time $t-$ in the ellipse $\bfrak_{a,b,\gamma}(z)$ with parameters $(a,b,\gamma)$ and centred at $z$.
\item If at least one $\kappa_i$ is equal to $1$, then for every $z'\in \bfrak_{a,b,\gamma}(z)$ we set $w_t^{(k)}(z'):=1$. That is, we fill the area $\bfrak_{a,b,\gamma}(z)$ with individuals that are all of type~1.
\item If all $\kappa_i$, $1\leq i\leq k$, are equal to $0$, then for every $z'\in \bfrak_{a,b,\gamma}(z)$ we set $w_t^{(k)}(z'):=0$. That is, we fill the area $\bfrak_{a,b,\gamma}(z)$ with individuals that are all of type~0.
\item Outside $\bfrak_{a,b,\gamma}(z)$, nothing changes, and $M_t^{(k)}$ is set to be the measure with density $w_t^{(k)}$.
\end{itemize}
In the case where all event areas are balls (\textit{i.e.}, $a=b$), it was proved in \cite{louvet2023} that the process $(M_t^{(k)})_{t\geq 0}$ is well-defined, Markovian and that it can be characterised by means of a well-posed martingale problem. These properties extend to the case where reproduction events are elliptical that we consider here. In the interest of completeness (and to be able to refer to them in future work), we state these results in Section~\ref{sec:defn_slfv}.
\begin{rem} The assumption that $\mu$ should have finite support is not necessary to define the $k$-parent SLFV (see \cite{louvet2023}, where a less strict condition is formulated whose analogue for elliptical events applies here too). However, when we study the asymptotic growth of the region occupied by type~1 individuals, which is the main question in this paper, this assumption ensures that ``expansion'' events cover a bounded area, included in a box of large but fixed size. This fact then allows us to compare the growth process with an appropriate discrete space first-passage percolation model and to derive a lower bound on the expected time needed for the area occupied by type~$1$ individuals to reach a given point in space. See Section~\ref{sec:upper_bound} for more details.
\end{rem}

Thinking in terms of the area occupied by individuals of type~1 (which we shall call the \textit{occupied area} from now on), we see that this random set has a tendency to expand, but for a fixed value of $k$ it can also backtrack during an event in which the region affected contains both types of individuals (and hence overlaps the boundary of the occupied region) but, by chance, only type~0 individuals are sampled. However, the larger $k$, the more likely it is that at least one of the $k$ types $\kappa_i$ is equal to $1$ whenever the frequency of type~$1$ in the area is strictly positive. In the limit as $k\rightarrow \infty$, we therefore obtain a pure growth process, called the $\infty$-parent SLFV and denoted by $(M_t^{(\infty)})_{t\geq 0}$, whose dynamics are as follows. Let $\Pi$ be the same Poisson point process as earlier. We start from a measure $M_0$ of the form~\eqref{eqn:M} and for every $(t,z,a,b,\gamma)\in \Pi$:
\begin{itemize}
\item If $\mathrm{Vol}(\bfrak_{a,b,\gamma}(z)\cap \{y\in \mathbb{R}^2:\, w_{t-}^{(\infty)}(y)=1\})> 0$, then for every $z'\in \bfrak_{a,b,\gamma}(z)$ we set $w_t^{(\infty)}(z'):=1$. That is, we fill the area $\bfrak_{a,b,\gamma}(z)$ with individuals that are all of type~1.
\item Otherwise, for every $z'\in \bfrak_{a,b,\gamma}(z)$ we set $w_t^{(\infty)}(z'):=0$ (which modifies the value of the density at most in a set of locations of volume $0$).
\item Outside $\bfrak_{a,b,\gamma}(z)$, nothing changes, and $M_t^{(\infty)}$ is set to be the measure with density $w_t^{(\infty)}$.
\end{itemize}
Again, the process $(M_t^{\infty})_{t\geq 0}$ with ball-shaped events was proved to be well-defined, Markovian and solution to a well-posed martingale problem in \cite{louvet2023}. For completeness, in Section~\ref{sec:defn_slfv} we state the analogues of these results in the case of elliptical events.

\begin{rem}
Until now, the $k$-parent and $\infty$-parent SLFV have been defined with only two types of individuals, existing and non-existing. In this case, it is tempting to conclude that the set-valued dynamics of the occupied region is a sufficient description of the population process. However, to address questions raised by experimental and theoretical work on expanding colonies of microorganisms and \textit{gene surfing} \cite{gracia2013surfing,hallatschek2007genetic,hallatschek2010life}, in future work we shall define these models with a general, compact, type space $K$ and individuals of different types competing to invade space. In this case, the measure-valued formulation is a more natural and flexible framework than the formulation in terms of a random partitioning of space describing the regions occupied by the different subpopulations.
\end{rem}

\begin{rem} Although our focus here is on models of expanding populations, let us remark that the $k$-parent and $\infty$-parent SLFV can also be interpreted as modelling spatially homogeneous populations with strong selection in favour of type 1 individuals (the larger $k$, the stronger the selective advantage of type~1 over type~0 individuals). In this perspective, our study follows on from a long series of work focusing mainly on weak selection regimes (see, \textit{e.g.},
\cite{etheridge2017branchingbrownian,etheridge2017browniannet,etheridge2020rescaling}
for selection against one type,
\cite{forien2017central}
for general forms of selection,
\cite{biswas2018spatial,chetwynd2019rare,klimek2020spatial}
for fluctuating selection and
\cite{etheridge2017branching}
for selection against heterozygosity). Note however that in our regime of strong selection, no scaling of time or space is necessary.
\end{rem}

\subsection{Growth models starting from an occupied area of infinite volume}
Formulated as above, the $\infty$-parent SLFV corresponds to a continuous space version of the Eden model \cite{eden1961two} and is very similar to models of continuous-space first-passage percolation, and in particular to the model introduced by Deijfen in \cite{deijfen2003asymptotic}. Apart from their state spaces (Deijfen's process being set-valued), the main difference between the $\infty$-parent SLFV, and Deijfen's process and its multitype extensions studied in \cite{deijfen2003asymptotic,deijfen2004coexistence,deijfen2004stochastic}, is that in the latter, a reproduction event results in the expansion of the area occupied by type~1 individuals only when the centre of the event is already occupied (whereas in the $\infty$-parent SLFV, expansion occurs whenever there is a positive fraction of type~1 individuals in the area of the event). It is not clear how this subtle difference impacts the qualitative behaviour of the random growth, as a significant number of additional events occur in the $\infty$-parent SLFV. Note however that the expansion of the occupied region in the $\infty$-parent SLFV is obviously faster than in Deijfen's model.

A second, and perhaps more important, difference is that the $\infty$-parent SLFV is defined for very general initial conditions, which are not restricted to occupied regions of finite volume. In particular, this is why the definition provided in the previous section is only informal: when
$$
\mathrm{Vol}(\{z\in \mathbb{R}^2:\, w_t^{(\infty)}(z)=1\}) = + \infty,
$$
an infinite number of reproduction events intersect the occupied area in any time interval, and the global jump rate of the process is infinite. This difficulty is handled by endowing the set of measures of the form~\eqref{eqn:M} with the topology of vague convergence. In this way, only the changes in compact areas of $\mathbb{R}^2$ are used to define the dynamics of $(M_t^{(\infty)})_{t\geq 0}$, and since these changes occur at finite rate, it gives rise to a well-defined process characterised as the solution to a well-posed martingale problem. This construction is detailed in Sections~\ref{subsec:state_space} and~\ref{subsec:defn_martingale_problem}. In \cite{gouere2007,gouere2008}, an alternative construction of Deijfen's model is proposed, in which the time and centre coordinates of the points in the Poisson point process $\Pi$ are used to allocate ``contamination times'' to all other points than the centre in the area of the events (representing the time needed for an infection to travel from the centre to any other point in the area of the event). A point $y\in \mathbb{R}^d$ then lies in the occupied area at time $t$ if there is a finite path allowing to travel from the initial occupied area to $y$ before time $t$. This construction, very reminiscent of the formulation of discrete-space first-passage percolation models in terms of edge-passing times, allows arbitrary initially occupied areas. It has many merits, in particular it enables us to use the fine techniques from percolation theory to analyse the growth dynamics. However, it is specific to Deijfen's model and does not seem easily generalisable to other event-based expansion rules (including our own), especially when we want to allow local backtracking of the occupied area as in the $k$-parent SLFV.

In \cite{deijfen2003asymptotic,deijfen2004stochastic,gouere2008}, a shape theorem is proved for Deijfen's process $(E_t)_{t\geq 0}$ starting from a ball centred at $0$. More precisely, it is shown that when the distribution of the radii of the events has compact support \cite{deijfen2003asymptotic}, or has an exponential moment \cite{deijfen2004stochastic}, the scaled occupied area $t^{-1}E_t$ converges almost surely as $t$ tends to infinity to an asymptotic shape, which by isotropy is a ball. In \cite{gouere2008}, the authors obtain a necessary and sufficient condition on the distribution of event radii for such a shape theorem to hold true. We shall instead focus on growth starting from a half-plane $H$ and on the expansion of the occupied region in the direction transverse to the boundary of $H$. We shall similarly be interested in the asymptotic behaviour of the $k$-parent SLFV, in which the occupied area can also backtrack (which is not the case in the $\infty$-parent SLFV).

To place our work in the more global context already hinted at, first-passage percolation processes are stochastic growth models originally defined on a lattice (generally $\mathbb{Z}^{d}, \, d \geq 2$). In these models, each vertex is either occupied, or empty. If vertex $x \in \mathbb{Z}^{d}$ becomes occupied at time~$t$, and if it is connected to a vertex $y \in \mathbb{Z}^{d}$ by an edge (denoted $e$), then vertex~$y$ becomes occupied at time~$t + \tau_{e}$, where $\tau_{e}$ is independent from one edge to another (but not necessarily identically distributed).
Whether the expansion is linear in time depends on the distribution of the time needed to pass through any given edge of the grid (see, \textit{e.g.}, \cite{auffinger201750}) and whether one considers short-range percolation, such as nearest neighbour percolation, or long-range percolation, in which two vertices $x,y \in \mathbb{Z}^{d}$ are connected by an edge no matter the distance between them \cite{chatterjee2016multiple,cox1981some,richardson1973random}. In particular, when edge passing times are distributed as in the Eden growth model, growth is linear in time for short-range percolation and potentially faster for long-range percolation, depending on the relation between the distribution of $\tau_{e}$ and the distance between the two vertices it connects \cite{auffinger201750,chatterjee2016multiple}. Part of the proof of our main results will make use of the linear expansion of the occupied area in an appropriate first-passage percolation model, thanks to a natural coupling argument (see Section~\ref{sec:upper_bound}). Works like \cite{deijfen2003asymptotic,deijfen2004stochastic,howard1997euclidean}
are interesting examples of extensions of first-passage percolation models to a continuous setting.

When the growth is linear in time, in general it is possible to obtain lower and upper bounds on the speed of growth (see, \textit{e.g.}, \cite{alm2002lower,van1993inequalities}), or to use simulations to approximate it \cite{alm2015first}. For other stochastic growth models, such as the corner growth model \cite{seppalainen2009lecture}, which belongs to the family of \textit{last-passage percolation models}, it is possible to obtain an explicit speed of growth for specific passage time distributions \cite{rost1981non}. As many other growth models, the Eden model is conjectured to belong to the universality class of the Kardar-Parisi-Zhang (KPZ) equation \cite{kardar1986dynamic}. This equation generates rough fronts, whose characteristics are similar to the ones of fronts observed in some expanding biological populations (see, \textit{e.g.}, \cite{huergo2010morphology}). Such a conjecture is notably difficult to establish; to our knowledge, it has only been demonstrated in the case of the solid-on-solid growth model in \cite{bertini1997stochastic}.

\subsection{Linear growth of the occupied region in the $k$-parent and $\infty$-parent SLFV}\label{subsec:main results}
As we already explained, in what follows we suppose that, initially, the occupied region is the half-plane
\begin{equation}\label{eqn:def H}
H= \{(x,y) \in \rtwo : x < 0\}.
\end{equation}
That is, the initial value of each $k$-parent SLFV ($k\in \{2,3,\ldots,\infty\}$) is taken to be the measure $M^H$ defined as follows:
\begin{equation}\label{eqn:def MH}
M^{H}(dz,d\kappa) := \big(\mathds{1}_{H}(z)\delta_{1}(d\kappa) + \mathds{1}_{H^c}(z)\delta_{0}(d\kappa)\big)dz,
\end{equation}
and we write $w^H=\mathds{1}_{H}$ for its density. We let $(M_t^{(k)})_{t\geq 0}$ be the $k$-parent SLFV starting from $M^H$, and let $(w_t^{(k)})_{t\geq0}$ be its density process.

We are interested in the expansion transverse to the interface $\{(0,y),\, y\in \mathbb{R}\}$ of the area initially occupied by type~1 individuals and, to measure it, we consider the first time at which the point $(x,0)$ is occupied for $x>0$. Because we work with measures, whose densities are defined up to Lebesgue null sets, we need a definition of occupancy of $(x,0)$ which does not depend on the choice of density for $M_t^{(k)}$. To this end, let $\bcal_\epsilon(z)$ be the closed ball of radius $\epsilon$ centred at $z$, and $V_\epsilon$ be the volume of this ball.
\begin{definition}\label{defn:occupied_area}
Let $M$ be a measure of the form~\eqref{eqn:M}, with density $w_M$. We say that $z \in \rtwo$ is \textit{occupied in $M$} if
\begin{equation*}
\lim\limits_{\epsilon \to 0} V_{\epsilon}^{-1} \int_{\bcal_{\epsilon}(z)} w_{M}(z')dz' > 0,
\end{equation*}
and that $z$ is \textit{empty in $M$} otherwise.
\end{definition}
The fact that the limit in Definition \ref{defn:occupied_area} exists is a consequence of the fact that $w_{M}(z')dz'$ is absolutely continuous with respect to Lebesgue measure, and so the Lebesgue differentiation theorem applies.

First, we focus on the growth of the occupied region in the $\infty$-parent SLFV. To do so, let us introduce the following random times.
\begin{definition}\label{defn:time_location_reached}
For all $x > 0$, let
\begin{equation*}
\overrightarrow{\tau}\!_{x}^{\,(\infty)} := \min\left\{
t \geq 0 : \limeps V_{\epsilon}^{-1} \int_{\bcal_{\epsilon}((x,0))} w_{t}^{(\infty)}(z)dz > 0
\right\}.
\end{equation*}
\end{definition}
The fact that $\overrightarrow{\tau}\!_{x}^{\,(\infty)}$ can be defined as a minimum rather than an infimum is a consequence of Lemma~\ref{lem:property_1_density_slfv} below (which states that in all $k$-parent SLFV, the value of $M^{(k)}$ in a given compact area in $\mathbb{R}^2$ is updated at a bounded rate), and of the fact that the trajectories of $M^{(\infty)}$ are c\`adl\`ag by construction. Since a chain of events leading to $(x,0)$ does not need to pass through all points $(x',0)$ with $x'<x$, the function $x \to \overrightarrow{\tau}\!_{x}^{\,(\infty)}$ is not necessarily increasing. See Figure~\ref{fig:taux_forwards} for an illustration. Our first main result is the following.

\begin{figure}[t]
\centering
\includegraphics[width = 0.7\linewidth]{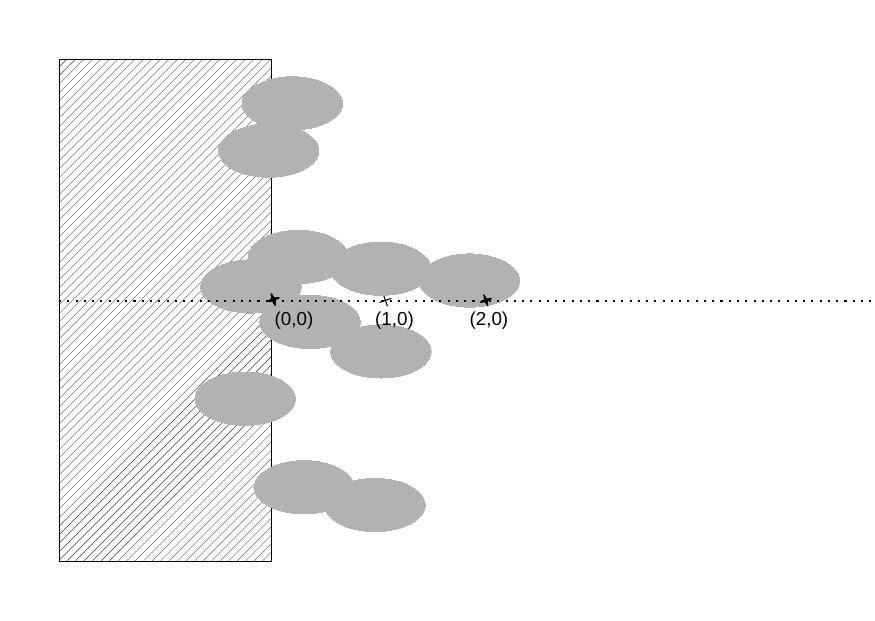}
\caption{State at some time $t$ of the $\infty$-parent SLFV. Initially, type~$1$ individuals cover the half-plane $H$, corresponding to the hatched area. Each grey ellipse represents a reproduction event occurring during the time interval $[0,t]$ which overlaps an area initially empty, and resulting in the corresponding area being completely filled with type~$1$ individuals. In this configuration, the point $(2,0)$ is already occupied at time $t$ while the point $(1,0)$ is not.}
\label{fig:taux_forwards}
\end{figure}

\begin{thm}\label{thm:speed_growth_infty_slfv}
There exists $\nu^{(\infty)} > 0$ such that
\begin{equation*}
\lim\limits_{x \to + \infty} \frac{\esp\big[\overrightarrow{\tau}\!_{x}^{\,(\infty)}\big]}{x} = \nu^{(\infty)}\quad \text{and} \quad
\lim\limits_{x \to + \infty} \frac{\overrightarrow{\tau}\!_{x}^{\,(\infty)}}{x} = \nu^{(\infty)} \quad\text{in probability.}
\end{equation*}
\end{thm}
The proof of Theorem~\ref{thm:speed_growth_infty_slfv} requires a series of comparison arguments and estimates that are presented in Sections~\ref{sec:reformulation}, \ref{sec:lower_bound} and \ref{sec:upper_bound}. It is completed at the end of Section~\ref{sec:upper_bound}. The proof also provides a lower bound on the speed of growth $(\nu^{(\infty)})^{-1}$ in terms of the speed of the ``express chain'' defined in Section~\ref{subsec:express chain}, see Remark~\ref{rem:speed in example}. However, the numerical exploration of the growth dynamics of the $\infty$-parent SLFV performed in Section~\ref{sec:numerical_simulations} shows that this lower bound can be highly suboptimal.

Let us now consider the dynamics of the occupied region in the $k$-parent SLFV. Due to the possible local backtracking of the boundary of the occupied region, the dynamics of the $k$-parent SLFV is significantly different from models of continuous first-passage percolation. In particular, we are no longer able to use the fine subadditivity argument of~\cite{liggett1985improved} to show convergence of the normalised hitting time of $(x,0)$. Instead, we obtain the following result.
\begin{thm}\label{thm:speed_growth_k_slfv}
Let $k\geq 2$. There exists $\nu^{(k)} \geq \nu^{(\infty)}$ such that for all $\varepsilon> 0$, there exists $x(\varepsilon)>0$ satisfying that for every $x\geq x(\varepsilon)$ and every $t>0$,
\begin{equation*}
\proba\left(
\limeps V_{\epsilon}^{-1} \int_{\bcal_{\epsilon}((x,0))} w_{t}^{(k)}(z)dz\, =\, 0
\right) \leq \frac{\big(\nu^{(k)}+\varepsilon\big)x}{t}.
\end{equation*}
\end{thm}
Note that the result of Theorem~\ref{thm:speed_growth_k_slfv} is weaker than that of Theorem~\ref{thm:speed_growth_infty_slfv} but essentially says that for large $x$, the point $(x,0)$ belongs to the occupied area after a time at most of the order of $\nu^{(k)} x$.

The rest of the paper is laid out as follows. In Section~\ref{sec:defn_slfv}, we define the $k$-parent and $\infty$-parent SLFV with elliptical reproduction events rigorously, introduce their dual processes, state the duality property which relates the SLFV to its dual process, and finally we propose an alternative construction of the SLFV processes which allows us to couple all $k$-parent SLFV ($k\in \{2,3,\ldots,+\infty\}$). In Section~\ref{sec:reformulation}, we use the duality relation obtained in Section~\ref{subsubsection:duality_relation} to reformulate Theorems~\ref{thm:speed_growth_infty_slfv} and \ref{thm:speed_growth_k_slfv} in terms of the dual processes of the $k$-parent SLFV. The proof of Theorem~\ref{thm:speed_growth_infty_slfv}, based on this reformulation, spans two sections. In Section~\ref{sec:lower_bound}, we show that the growth is \textit{at least} linear in time, and provide a lower bound on the speed of growth of the process. In Section~\ref{sec:upper_bound}, we use a comparison between the dual process and a first-passage percolation process to show that the growth is \textit{at most} linear in time. Combining the results from both sections yields Theorem~\ref{thm:speed_growth_infty_slfv}. In Section~\ref{sec:k-parent}, we use an analogous strategy to prove Theorem~\ref{thm:speed_growth_k_slfv}. Finally, in Section~\ref{sec:numerical_simulations} we use numerical simulations to approximate the growth speed of the area occupied by type~$1$ individuals, and we compare it to the lower bound obtained in Section~\ref{sec:lower_bound}. This bound was initially conjectured to be equal to (or at least a good approximation for) the growth speed, but the simulations show that the actual speed can be significantly higher. Moreover, they suggest that the growth is driven by ``spikes'' which occur at the front and then thicken in all directions. Providing a precise description of the impact of these fluctuations of the interface on the growth speed remains an open question.

\section{Spatial $\Lambda$-Fleming Viot processes with elliptical events}\label{sec:defn_slfv}

In this section, we rigorously define the $k$-parent and $\infty$-parent SLFV with elliptical events and their dual processes, adapting the different constructions carried out in \cite{louvet2023} in the case where all events are ball-shaped.

\subsection{State space and notation for ellipses}\label{subsec:state_space}
Let us first formalise the set of measures of the form~\eqref{eqn:M} in which our SLFV processes will take their values. Let $\mcal$ be the set of all measures on $\mathbb{R}^2\times \{0,1\}$ whose marginal distribution over $\mathbb{R}^2$ is Lebesgue measure. Such measures can be decomposed as in~\eqref{eqn:M}:
\begin{equation}\label{decomposition}
M(dz,d\kappa)= \rho_M(z,d\kappa)dz = \big(w_M(z)\delta_1(d\kappa) + (1-w_M(z))\delta_0(d\kappa)\big)dz,
\end{equation}
where for almost all $z\in \mathbb{R}^2$, $\rho_M(z,d\kappa)$ is a probability distribution on $\{0,1\}$ and $w_M(z)=\rho_M(z,\{1\})$. In this decomposition, the mapping $z\mapsto \rho_M(z,d\kappa)$ is defined up to Lebesgue null sets: $\rho_M$ and $\rho'_M$ are equivalent if and only if they give rise to the same measure $M$ on $\mathbb{R}^2\times \{0,1\}$, and so are the corresponding \textit{densities} $w_M=\rho_M(\cdot,\{1\})$ and $w'_M=\rho'_M(\cdot,\{1\})$. In general, $w_M$ takes its values in the interval $[0,1]$. The state space of our SLFV processes will be the subset of $\mcal$ of all measures admitting a density $w_M$ taking values in $\{0,1\}$ (that is, for every $z\in \mathbb{R}^2$, $\rho_M(z,d\kappa)$ is either a Dirac mass at $0$ or a Dirac mass at $1$):
\begin{equation}\label{eqn:Mlambda}
\mcal_\lambda:= \big\{M\in \mcal\,:\, \exists w_M:\mathbb{R}^2\rightarrow \{0,1\} \hbox{ satisfying }\eqref{decomposition}\big\}.
\end{equation}
We endow~$\ml$ with the topology of vague convergence. The space of all c\`adl\`ag $\mcal_\lambda$-valued trajectories is denoted by $\statespace$ and we equip this space with the standard Skorokhod topology.

\begin{rem}
In~\cite{louvet2023}, $w_M$ was defined as the local proportion of \textit{type~0} individuals, so that the occupied region was the set of locations at which $w_M(z)=0$. To work with more natural notation, here we define $w_M$ as the local proportion of type~1 individuals (corresponding to $1-w_M$ in \cite{louvet2023}) and the occupied region is the set of locations at which $w_M(z)=1$.
\end{rem}

Let us also set the notation regarding ellipses.
\begin{definition}\label{defn:notation_ellipse}
Let $z_{c} = (x_{c},y_{c}) \in \rde$, $(a,b) \in (0,+\infty)^{2}$ and $\gamma \in (-\pi/2,\pi/2)$. The ellipse with centre $z_{c}$ and parameters $(a,b,\gamma)$, denoted by $\bfrak_{a,b,\gamma}(z_{c})$, is defined by:
\begin{equation*}
\bfrak_{a,b,\gamma}(z_{c}) = \left\{\begin{pmatrix}
x_{c} \\
y_{c}
\end{pmatrix}
+ A_{\gamma} \begin{pmatrix}
ar\cos(\theta) \\
br \sin(\theta)
\end{pmatrix} : r \in [0,1], \theta \in [0,2\pi)
\right\},
\end{equation*}
where
\begin{equation*}
A_{\gamma} = \begin{pmatrix}
\cos(\gamma) & - \sin(\gamma) \\
\sin(\gamma) & \cos(\gamma)
\end{pmatrix}.
\end{equation*}
\end{definition}
See Figure~\ref{fig:illustr_ellipse} for an illustration. We denote the volume of an ellipse with parameters $(a,b,\gamma)$ by
\begin{equation}\label{defn:volume ellipse}
V_{a,b} := \text{Vol}(\bfrak_{a,b,\gamma}(0)).
\end{equation}
Note that this volume is independent of the angle $\gamma$.

\begin{figure}[t]
\centering
\includegraphics[width = 0.5\linewidth]{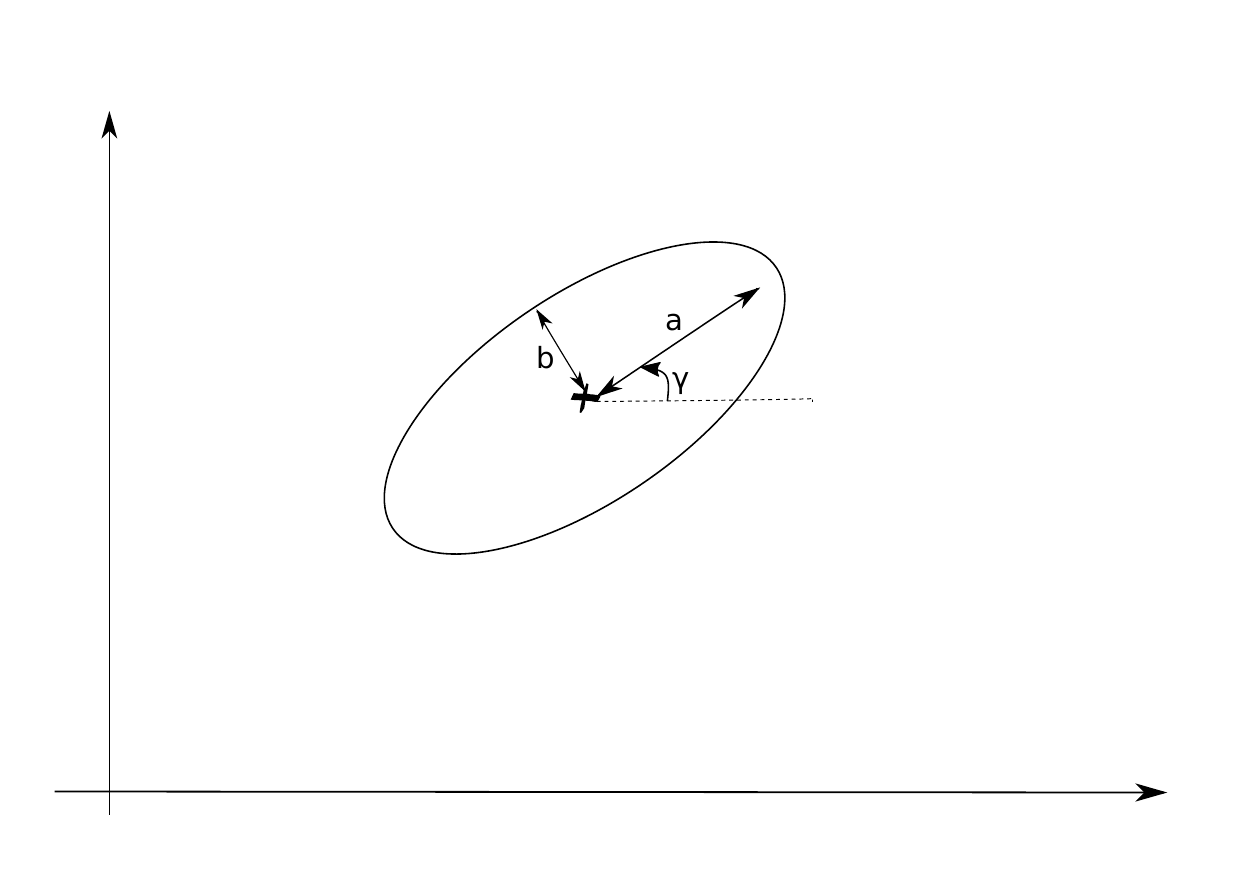}
\caption{Ellipse with parameters $(a,b,\gamma)$. }\label{fig:illustr_ellipse}
\end{figure}

\subsection{Definition as the unique solution to a martingale problem}\label{subsec:defn_martingale_problem}

\subsubsection{Martingale problem}\label{subsubsec:martingale_problem}
All the random objects we consider in this section are defined on some probability space $(\Omega, \fcal, \proba)$ and we let~$\esp$ denote the expectation with respect to~$\proba$.

First, let us introduce the test functions on which the martingale problems characterizing the $k$-parent and $\infty$-parent SLFV will be defined.
Let $C^{1}(\rmath)$ be the space of all continuously differentiable functions $F : \rmath \to \rmath$, and let $C_{c}(\rtwo)$ be the space of all continuous and compactly supported functions $f: \rtwo \to \mathbb{R}$. For all $f \in C_{c}(\rde)$ and $F \in C^{1}(\rmath)$, we define $\Psi_{F,f} : \mcal_{\lambda} \to  \rmath$ by
\begin{equation}\label{defn:test function}
\forall\, M \in \mcal_{\lambda},\qquad  \Psi_{F,f}(M) := F\left(\langle w_{M},f \rangle \right),
\end{equation}
where we recall that $w_{M}$ denotes a density of $M$ (fixed in an arbitrary way), and
\begin{equation*}
\langle w_M, f \rangle := \int_{\rde} f(z)w_M(z)dz.
\end{equation*}
Note that the value of $\Psi_{F,f}(M)$ does not depend on the choice of the density $w_M$. The set of functions that we shall use in the formulation of the martingale problems is then the set of functions of the form $\Psi_{F,f}$ with $F \in C^{1}(\mathbb{R})$ and $f \in C_{c}(\rtwo)$.

For all $(a,b) \in (0,+\infty)^{2}$,  $\gamma \in (-\pi/2,\pi/2)$ and $f \in C_{c}(\rtwo)$, let
\begin{equation}\label{defn:intersect}
\text{Supp}^{a,b,\gamma}(f) := \left\{
z \in \rde : \text{Vol}\left(\bfrak_{a,b,\gamma}(z) \cap \text{Supp}(f)\right) \neq 0
\right\},
\end{equation}
where $\text{Supp}(f)$ stands for the support of $f$. The set $\text{Supp}^{a,b,\gamma}(f)$ can be interpreted as the set of all potential centres $z \in \rde$ for ellipses with parameters $(a,b,\gamma)$ overlapping the support of $f$.

For all $z \in \rde$ and $w : \rde \to \{0,1\}$, let $\Theta_{z}^{a,b,\gamma}(w) : \rde \to \{0,1\}$ and $\overline{\Theta}_{z}^{a,b,\gamma}(w) : \rtwo \to \setzeroun$ be the functions defined by
\begin{align}
\Theta_{z}^{a,b,\gamma}(w) &:= \mathds{1}_{(\bfrak_{a,b,\gamma}(z))^{c}}w + \mathds{1}_{\bfrak_{a,b,\gamma}(z)}, \qquad \hbox{and}\nonumber \\
\text{and} \quad \overline{\Theta}_{z}^{a,b,\gamma}(w) &:= \mathds{1}_{(\bfrak_{a,b,\gamma}(z))^{c}}w. \label{defn:Theta}
\end{align}
That is, if $w$ is the indicator function of a set $E$ (representing the occupied area in our application), then $\Theta_{z}^{a,b,\gamma}(w)$ is the indicator function of the set $E\cup \bfrak_{a,b,\gamma}(z)$ and $\overline{\Theta}_{z}^{a,b,\gamma}(w)$ is the indicator function of the set $E \setminus \bfrak_{a,b,\gamma}(z)$ (which is equal to $E$ if $E\cap \bfrak_{a,b,\gamma}(z)=\emptyset$).

Let $\mu$ be a finite measure on $(0,+\infty)^{2} \times (-\pi/2,\pi/2)$ such that there exists $\rcal > 0$ satisfying
\begin{equation*}
\mu\Big(\big((0,\rcal] \times (0,\rcal]\big)^{c} \times (-\pi/2,\pi/2)\Big) = 0.
\end{equation*}
Let $\rcal_{\mu}$ be the smallest $\rcal > 0$ such that the above condition is satisfied, and set
\begin{equation}
S_{\mu} := (0,\rcal_{\mu}] \times (0,\rcal_{\mu}] \times (-\pi/2,\pi/2). \label{defn:support}
\end{equation}

We are now ready to introduce the operators $\lk$, $k \geq 2$, and $\linfty$ that we shall use to formulate the martingale problems. For every $F \in C^{1}(\rmath)$, $f \in C_{c}(\rde)$ and $M \in \ml$, let us define
\begin{align*}
\lk \Psi_{F,f}(M)
:= \int_{S_{\mu}}\int_{\text{Supp}^{a,b,\gamma}(f)}\int_{\bfrak_{a,b,\gamma}(z)^{k}} \frac{1}{V_{a,b}^{k}} \,  &\left[\left(\prod_{j = 1}^{k}\left(1-w_{M}(z_{j})\right)\right) \, F(\langle\overline{\Theta}_{z}^{a,b,\gamma}(w_{M}),f\rangle) \right.\\
&+ \left(1-\prod_{j = 1}^{k}(1-w_{M}(z_{j}))\right) \, F(\langle\Theta_{z}^{a,b,\gamma}(w_{M}),f\rangle)  \\
&- \left. F(\langle w_{M},f\rangle) \vphantom{\prod_{j = 1}^{k}}
\right]
dz_{1}\ldots dz_{k}dz\mu(da,db,d\gamma)
\end{align*}
and
\begin{align*}
\linfty \Psi_{F,f}(M):= \int_{S_{\mu}}\int_{\text{Supp}^{a,b,\gamma}(f)} & \left(
1 - \un{0}\left(
\int_{\bfrak_{a,b,\gamma}(z)}w_{M}(z')dz'
\right)\right) \\
&\times \left(
F\left(\langle \Theta_{z}^{a,b,\gamma}(w_{M}),f \rangle\right)
- F\left(\langle w_{M},f \rangle
\right)\right)dz\mu(da,db,d\gamma).
\end{align*}
These operators exactly encode the dynamics described informally in the introduction. Indeed, $\lk$ (\textit{resp.}, $\linfty$) is the generator of a jump process with rates given by the intensity measure of the Poisson point process used in the informal description of the $k$-parent SLFV (\textit{resp.}, the $\infty$-parent SLFV). Moreover, the product $\prod_j (1-w_M(z_j))$ in the formulation of $\lk$ is the indicator function that none of the $k$ `parental' locations sampled uniformly at random over $\bfrak_{a,b,\gamma}(z)$ during the event is already occupied, in which case the whole area of the event is subtracted from the occupied region (an operation encoded by $\overline{\Theta}_{z}^{a,b,\gamma}(w_M)$); otherwise, the area of the event is added to the occupied area, as described by $\Theta_{z}^{a,b,\gamma}(w_M)$. Likewise, in the formulation of $\linfty$, if the volume of occupied points in $\bfrak_{a,b,\gamma}(z)$ is zero then no changes occur; otherwise, the area of the reproduction event is added to the occupied area.

The following result enables us to give a rigorous definition of the $k$-parent and $\infty$-parent SLFV.

\begin{thm}\label{thm:definition_SLFV}
Let $M^{0}\in\ml$.
\begin{itemize}
\item[(i)] For all $k \geq 2$, there exists a unique $D_{\mcal_{\lambda}} [0,+\infty)$-valued process $(M_{t}^{(k)})_{t \geq 0}$ such that $M_{0}^{(k)} = M^{0}$ and, for all $F \in C^{1}(\rmath)$ and $f \in C_{c}(\rde)$,
\begin{equation}\label{MPk}
\left(\Psi_{F,f}\big(M_{t}^{(k)}\big) - \Psi_{F,f}\big(M_{0}^{(k)}\big)
- \int_{0}^{t} \lcal_{\mu}^{k} \Psi_{F,f}\big(M_{s}^{(k)}\big)ds
\right)_{t \geq 0}
\end{equation}
is a martingale. Moreover, the process $(M^{(k)}_t)_{t\geq 0}$ is Markovian.
\item[(ii)] There exists a unique $D_{\mcal_{\lambda}} [0,+\infty)$-valued process $(M_{t}^{(\infty)})_{t \geq 0}$ such that $M_{0}^{(\infty)} = M^{0}$ and, for all $F \in C^{1}(\rmath)$ and $f \in C_{c}(\rde)$,
\begin{equation}\label{MPinfty}
\left(\Psi_{F,f}\big(M_{t}^{(\infty)}\big) - \Psi_{F,f}\big(M_{0}^{(\infty)}\big)
- \int_{0}^{t} \lcal_{\mu}^{\infty} \Psi_{F,f}\big(M_{s}^{(\infty)}\big)ds
\right)_{t \geq 0}
\end{equation}
is a martingale. Moreover, the process $(M^{(\infty)}_t)_{t\geq 0}$ is Markovian.
\end{itemize}
\end{thm}
We then define the $k$-parent SLFV (\textit{resp.}, $\infty$-parent SLFV) associated with~$\mu$ and with initial condition~$M^{0}$ as the unique solution to the martingale problem $(\lk, \delta_{M^{0}})$ (\textit{resp.}, $(\linfty, \delta_{M^{0}})$).

The proof of Theorem~\ref{thm:definition_SLFV}~$(i)$ is identical to the proof of Theorem~2.1 in~\cite{louvet2023}, and therefore we omit it. The key observation is that the shapes of the reproduction events play no critical role in the definition of the process as long as their radii are controlled. It is the case here because we require that $\mu$ should be a finite measure with compact support. Theorem~\ref{thm:definition_SLFV}~$(ii)$ then follows from exactly the same construction of the process of densities $(w_{M_t^{(\infty)}})_{t\geq 0}$ via the sequential coupling of densities of $M^{(k)}$, $k\geq 2$, as in \cite{louvet2023} (during each event, we use the same $k$-sample of locations as for $w_{M^{(k)}}$, plus a $(k+1)$-th independent sample, to construct $w_{M^{(k+1)}}$ -- see Theorem~2.9 in \cite{louvet2023} and Section~\ref{subsec:poisson_construction} below). Again, because the proof in the case of ball-shaped events is long and technical and its adaptation to the case with elliptical events does not require new arguments, we do not repeat it.

\subsubsection{Duality relations}\label{subsubsection:duality_relation}
As in the case of ball-shaped events studied in \cite{louvet2023}, a key property of the $k$-parent and $\infty$-parent SLFV with elliptical events is the existence of a dual process, which is not only at the core of the strategy to prove uniqueness in law of the $D_{\mcal_\lambda}[0,\infty)$-valued solutions to the martingale problems stated in Theorem~\ref{thm:definition_SLFV}, but also provides a way to investigate properties of the expansion dynamics. These duality relations are stated in Propositions~\ref{prop:duality_relation_k_parent_SLFV} and \ref{prop:duality_relation} below. As often in such systems, the dual processes consist in tracing back in time the origins of the types contained in some compact area at a given time $t$, using the fact that the time-reversal of a Poisson point process with intensity measure $dt \otimes dz \otimes \mu(da,db,d\gamma)$ is again a Poisson point process with the same intensity measure. The duality formulae essentially state that the probability that a given set $E$ of points in $\rde$ contains no type~$1$ (``existing'') individuals at time $t$ is equal to the probability that, at time~$0$, there are no type~$1$ individuals in the whole set of locations which, at some time $s\in [0,t]$, have influenced the type composition in $E$ at time $t$. To see where this interpretation comes from, take $E=\{z_1,\ldots,z_l\}$ in Proposition~\ref{prop:duality_relation_k_parent_SLFV}, and $E=E^0$ (a finite union of measurable connected subsets of $\rde$ of finite volume) in Proposition~\ref{prop:duality_relation}.

Let us now proceed with the rigorous definition of the dual processes and the statement of the duality relations. These processes will be defined on a new probability space $(\mathbf{\Omega},\mathbf{\fcal},\bP)$, and $\bE$ denotes the expectation with respect to~$\bP$. Recall that $\mu$ is the finite measure with compact support on $(0,\infty)^2\times (-\pi/2,\pi/2)$ introduced just before \eqref{defn:support}.

\paragraph{The $k$-parent ancestral process.} The dual process associated with the $k$-parent SLFV is defined on the set $\mcal_{p}(\rmath^{2})$ of all finite counting measures on $\rtwo$. We equip this state space with the topology of weak convergence. For all $\Xi = \sum_{i = 1}^{l} \delta_{\xi_{i}} \in \mcal_{p}(\rtwo)$, for all $z \in \rtwo$, $(a,b) \in (0,+\infty)^{2}$ and $\gamma \in (-\pi/2,\pi/2)$, let us define
\begin{equation*}
I_{z,a,b,\gamma}(\Xi) := \big\{
i \in \{ 1,\ldots,l\}: \xi_{i} \in \bfrak_{a,b,\gamma}(z)
\big\}.
\end{equation*}
In words, $I_{z,a,b,\gamma}(\Xi)$ is the set of all indices of the points in $\Xi$ which lie in $\bfrak_{a,b,\gamma}(z)$.

\begin{definition}\label{defn:k dual process}
($k$-parent ancestral process) Let $k\in \mathbb{N}\setminus \{0,1\}$ and $\Xi^{0} \in \mcal_{p}(\rtwo)$. The $k$-parent ancestral process $(\Xi_{t}^{k})_{t \geq 0}$ associated with $\mu$ and with initial condition $\Xi^{0}$ is the $\mcal_{p}(\rtwo)$-valued Markov jump process defined as follows. Let $\overleftarrow{\Pi}$ be a Poisson point process on $\rmath_{+} \times \rtwo \times S_{\mu}$ with intensity measure $dt \otimes dz \otimes \mu(da,db,d\gamma)$ (defined on $(\mathbf{\Omega},\mathbf{\fcal},\bP)$), and let $\Xi_{0}^{k} = \Xi^{0}$. Then, for every $(t,z,a,b,\gamma) \in \overleftarrow{\Pi}$, writing
\begin{equation*}
\Xi_{t-}^{k} = \sum_{i = 1}^{N_{t-}^{k}} \delta_{\xi_{t-}^{k,i}}\,,
\end{equation*}
(where $N_{t-}^k$ denotes the number of atoms in $\Xi_{t-}^{k}$ and $\xi_{t-}^{k,i}$ the location in $\mathbb{R}^2$ of the $i$-th atom):
\begin{itemize}
\item If $I_{z,a,b,\gamma}(\Xi_{t-}^{k}) \neq \emptyset$, we sample $k$ points $z_{1},...,z_{k}$ independently and uniformly at random in $\bfrak_{a,b,\gamma}(z)$, and we set
\begin{equation*}
\Xi_{t}^{k} := \sum_{i = 1}^{N_{t-}^{k}}\delta_{\xi_{t-}^{k,i}} - \sum_{i \in I_{z,a,b,\gamma}(\Xi_{t-}^{k})} \delta_{\xi_{t-}^{k,i}} + \sum_{j = 1}^{k}\delta_{z_{j}}.
\end{equation*}
That is, we remove all the atoms of $\Xi_{t-}^{k}$ sitting in $\bfrak_{a,b,\gamma}(z)$, and we add $k$ atoms at locations that are i.i.d and uniformly distributed over the ellipse $\bfrak_{a,b,\gamma}(z)$.
\item If $I_{z,a,b,\gamma}(\Xi_{t-}^{k}) = \emptyset$, then no jump occurs and $\Xi_t^k=\Xi_{t-}^k$.
\end{itemize}
\end{definition}
This definition is analogous to Definition~2.6 in \cite{louvet2023} in the case of ball-shaped events, and the fact that it gives rise to a well-defined process follows again from the property that the number of atoms in $\Xi^k$ can be stochastically bounded by a Yule process with $k$ offspring and finite reproduction rate.

The $k$-parent SLFV $M^{(k)}$ and the $k$-parent ancestral process $\Xi^k$ associated with $\mu$ then satisfy the following duality relation. For $z_1,\ldots,z_l\in \rde$, we write $\Xi[z_{1},...z_{l}] := \sum_{i=1}^{l} \delta_{z_{i}}$.
\begin{prop}\label{prop:duality_relation_k_parent_SLFV}
Let $k \geq 2$. Let $M^{0} \in \ml$. Any solution $(M^{(k)}_t)_{t\geq 0}$ to the martingale problem~\eqref{MPk} satisfies that for every $l \in \nmath\setminus \{0\}$ and every integrable function $\psi$ on $(\rtwo)^{l}$, we have for all $t\geq 0$
\begin{align*}
\esp_{M^{0}}\Bigg[&
\int_{(\rtwo)^{l}} \psi(z_{1},...,z_{l})\left\{
\prod_{j = 1}^{l}\left(1-w_{M_{t}^{(k)}}(z_{j})\right)
\right\} dz_{1}...dz_{l}
\Bigg] \\
&= \int_{(\rtwo)^{l}} \psi(z_{1},...,z_{l}) \bE_{\Xi[z_{1},...,z_{l}]}\Bigg[
\prod_{j = 1}^{N_{t}^{k}}\left(
1-w_{M^{0}}(\xi_{t}^{k,j})
\right)\Bigg]dz_{1}...dz_{l}\,,
\end{align*}
where $N_{t}^{k}$ and $\xi_{t}^{k,j}$, $1 \leq j \leq N_{t}^{k}$, are such that $\Xi_{t}^{k} = \sum_{j = 1}^{N_{t}^{k}} \delta_{\xi_{t}^{k,j}}$.
\end{prop}
\begin{rem} Note that the quantities appearing on both sides of the equality in Proposition~\ref{prop:duality_relation_k_parent_SLFV} can be expressed as integrals with respect to $M_t^{(k)}$ and $M^0$, and so they do not depend on the choice of the densities $w_{M_{t}^{(k)}}$ and $w_{M^0}$.\end{rem}

This result can be seen as the exact analogue of Proposition~2.8 in \cite{louvet2023} once we remember that in~\cite{louvet2023}, $w_{M^{(k)}}$ denotes the frequency of \textit{type 0} (or ``non-existing'') individuals, which is $1-w_{M^{(k)}}$ in our notation. Again, we do not repeat the proof here.

\paragraph{The $\infty$-parent ancestral process.} Unlike the processes $\Xi^k$ taking their values in $\mcal_{p}(\rtwo)$, the dual process of the $\infty$-parent SLFV is defined on the set $\ecal^{cf}$ of all finite unions of measurable connected subsets of $\rtwo$ with positive and finite Lebesgue measure, that we endow with the topology induced by the Hausdorff distance. Recall the definition of the Poisson point process $\overleftarrow{\Pi}$ introduced in Definition~\ref{defn:k dual process}.

\begin{definition}\label{defn:dual_process} ($\infty$-parent ancestral process) Let $E^{0} \in \ecal^{cf}$. The $\ecal^{cf}$-valued $\infty$-parent ancestral process $(E_{t}^{\infty})_{t \geq 0}$ associated with $\mu$ and with initial condition $E^{0}$ is defined as follows. Let $E_{0}^{\infty} = E^{0}$. Then for every $(t,z,a,b,\gamma) \in \overleftarrow{\Pi}$,
\begin{itemize}
\item if $E_{t-}^{\infty} \cap \bfrak_{a,b,\gamma}(z)$ has non-zero Lebesgue measure, we set
\begin{equation*}
E_{t}^{\infty} = E_{t-}^{\infty} \cup \bfrak_{a,b,\gamma}(z);
\end{equation*}
\item otherwise, no jump occurs and $E_t^{\infty}=E_{t-}^{\infty}$.
\end{itemize}
\end{definition}

\begin{lem}
The process $(E_{t}^{\infty})_{t \geq 0}$ of Definition \ref{defn:dual_process} is well-defined and Markovian.
\end{lem}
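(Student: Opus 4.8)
The plan is to verify three things in turn: that the jump rule keeps the process inside the state space $\ecal^{cf}$, that almost surely only finitely many atoms of $\overleftarrow{\Pi}$ trigger a jump on any bounded time interval (so that the path is a genuine càdlàg function with well-ordered jumps), and that the resulting pure-jump process is Markov. The closure of the state space is immediate from the definitions: for any $(a,b,\gamma)\in S_{\tilde{\mu}}$ and $z\in\rde$, the ellipse $\bfrak_{a,b,\gamma}(z)$ is convex, hence connected and Lebesgue-measurable, with volume $V_{a,b,\gamma}\in(0,\pi\rcal_{\tilde{\mu}}^{2}]$, so $\bfrak_{a,b,\gamma}(z)\in\ecal^{c}$. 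Since $\ecal^{cf}$ is by definition the set of all finite unions of elements of $\ecal^{c}$, if $E_{t-}^{\infty}\in\ecal^{cf}$ then $E_{t}^{\infty}=E_{t-}^{\infty}\cup\bfrak_{a,b,\gamma}(z)\in\ecal^{cf}$ as well; every admissible jump therefore preserves the state space.

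The main obstacle is ruling out explosion, since $\overleftarrow{\Pi}$ has infinitely many atoms on any $[0,T]$ and I must show that only finitely many are \emph{effective}, i.e. satisfy $\Vol(E_{t-}^{\infty}\cap\bfrak_{a,b,\gamma}(z))>0$. I would construct the process iteratively: set $T_{0}=0$, and having built $E^{\infty}$ up to the $k$-th effective event at time $T_{k}$, let $T_{k+1}$ be the first atom of $\overleftarrow{\Pi}$ after $T_{k}$ whose ellipse overlaps $E_{T_{k}}^{\infty}$ in positive measure. Given $E_{T_{k}}^{\infty}$, the total rate of effective atoms is
\[
\lambda_{k}=\int_{S_{\tilde{\mu}}}\Vol\big(\{z:\Vol(\bfrak_{a,b,\gamma}(z)\cap E_{T_{k}}^{\infty})>0\}\big)\,\tilde{\mu}(da,db,d\gamma),
\]
and the inner set of admissible centres is contained in the Minkowski enlargement of $E_{T_{k}}^{\infty}$ by the (centrally symmetric) ellipse $\bfrak_{a,b,\gamma}(0)$. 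The crucial observation is that each effective event adds a single bounded ellipse, so by subadditivity of Lebesgue measure under Minkowski enlargement the admissible-centre volume increases by at most a universal constant $C'\leq\pi(2\rcal_{\tilde{\mu}})^{2}$ per jump. Hence $\lambda_{k}\leq\tilde{\mu}(S_{\tilde{\mu}})\,(c_{0}+C'k)$ grows only linearly in $k$, where $c_{0}$ is the admissible-centre volume for $E^{0}$ (finite whenever $E^{0}$ is bounded, which covers the compactly-supported initial conditions relevant to the dual). Since the inter-jump times stochastically dominate independent exponentials of parameter $\lambda_{k}$ and $\sum_{k}(c_{0}+C'k)^{-1}=+\infty$, the explosion time $\lim_{k}T_{k}$ is infinite almost surely, so only finitely many effective events occur in each $[0,T]$ and $(E_{t}^{\infty})_{t\geq0}$ is a well-defined càdlàg $\ecal^{cf}$-valued path. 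I expect this linear bound to be the delicate point: enclosing $E_{T_{k}}^{\infty}$ in a ball whose radius grows like $k$ would yield a spurious quadratic rate $\lambda_{k}\sim ck^{2}$ and hence false explosion, so it is essential to control the admissible-centre volume through the \emph{measure} of the occupied region rather than its diameter.

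For the Markov property, by construction $E_{t}^{\infty}$ is a measurable functional of $E^{0}$ and of the restriction $\overleftarrow{\Pi}|_{[0,t]}$. Fixing $s\geq0$, the future increments are generated by $\overleftarrow{\Pi}|_{(s,\infty)}$, which by the independence of a Poisson point process over disjoint regions and its stationarity in time is independent of $\overleftarrow{\Pi}|_{[0,s]}$ and distributed as a fresh copy of the driving noise. As the jump rule depends on the past only through the current state $E_{s}^{\infty}$, the conditional law of $(E_{s+u}^{\infty})_{u\geq0}$ given the history up to time $s$ depends on that history only through $E_{s}^{\infty}$, which is precisely the Markov property and identifies $(E_{t}^{\infty})_{t\geq0}$ as the pure-jump Markov process whose rates and jump kernel are read off directly from Definition~\ref{defn:dual_process}.
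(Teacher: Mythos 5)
Your proof is correct, but it takes a genuinely different route from the paper. The paper's argument is a two-line domination: since $(a,b)\in(0,\rcal_{\tilde{\mu}}]^{2}$ forces $\bfrak_{a,b,\gamma}(z)\subseteq\bcal_{\rcal_{\tilde{\mu}}}(z)$, the jump rate of $(E_{t}^{\infty})_{t\geq 0}$ is bounded by that of the $\infty$-parent ancestral process driven by $\tilde{\mu}(S_{\tilde{\mu}})\delta_{(\rcal_{\tilde{\mu}},\rcal_{\tilde{\mu}},0)}$, whose finiteness (and, implicitly, whose well-posedness and Markovianity) is imported from Section~4 of the cited prior work on the ball case. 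You instead give a self-contained construction: the state-space closure check, an explicit non-explosion estimate, and the standard Poisson-increment argument for the Markov property, none of which the paper spells out. Your key estimate is sound: the admissible-centre set created by a newly added ellipse $\bfrak_{i}$ sits inside the Minkowski sum $\bfrak_{i}\oplus\bfrak_{a,b,\gamma}(0)\subseteq\bcal_{2\rcal_{\tilde{\mu}}}(z_{i})$, so each effective jump raises the total effective rate by at most $\tilde{\mu}(S_{\tilde{\mu}})\,\pi(2\rcal_{\tilde{\mu}})^{2}$, giving $\lambda_{k}\leq\tilde{\mu}(S_{\tilde{\mu}})(c_{0}+C'k)$ and hence $\sum_{k}\lambda_{k}^{-1}=+\infty$, which rules out explosion; your remark that a diameter-based enclosure would give a useless quadratic rate is exactly right, since the divergence criterion would then fail. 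What each approach buys: the paper's is shorter and delegates the analytic work to an existing result; yours is quantitative (a linear-in-$k$ rate bound) and independent of the external reference. One caveat you correctly flagged yourself: your argument needs $c_{0}<\infty$, hence $E^{0}$ bounded, whereas $\ecal^{cf}$ as defined in the paper only requires finite measure, and a connected set of finite measure can be unbounded with an $\rcal_{\tilde{\mu}}$-enlargement of infinite volume, making the initial jump rate infinite. This is a genuine restriction relative to the lemma's literal statement, although it covers every initial condition actually used in the paper (points and small balls), and the paper's own proof faces the same issue, merely displaced into the citation; to match the lemma in full generality you would either restrict the state space to bounded sets or verify that the cited ball-case construction accommodates unbounded finite-measure initial conditions.
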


\begin{proof}
Recall that for all $(t,z,a,b,\gamma) \in \overleftarrow{\Pi}$, we have $(a,b) \in (0,\rcal_{\mu}]^{2}$ (by definition of $\rcal_\mu$). Therefore,
\begin{equation*}
\bfrak_{a,b,\gamma}(z) \subseteq \bcal_{\rcal_{\mu}}(z) \text{ a.s.,}
\end{equation*}
where $\bcal_{\rcal_{\mu}}(z)$ is the ball of radius $\rcal_{\mu}$ centred at $z$,
and so we can bound the jump rate of $(E_{t}^{\infty})_{t \geq 0}$ from above by that of a $\infty$-parent ancestral process with the same initial condition and associated with $\mu(S_{\mu})\delta_{(\rcal_{\mu},\rcal_{\mu},0)}(da,db,d\gamma)$, which is finite (see Section~$4$ in \cite{louvet2023}).
\end{proof}

The $\infty$-parent SLFV and the $\infty$-parent ancestral process associated with the measure $\mu$ then satisfy the following duality relation.

\begin{prop}\label{prop:duality_relation}
Let $M^{0} \in \mcal_{\lambda}$. Any solution $(M^{(\infty)}_t)_{t\geq 0}$ to the martingale problem~\eqref{MPinfty} satisfies that for every $E^{0} \in \ecal^{cf}$, we have for all $t \geq 0$
\begin{equation}\label{eqn:duality}
\proba_{M^{0}}\bigg(\int_{E^{0}}w_{M_{t}^{(\infty)}}(z)dz = 0\bigg) = \bP_{E^{0}}\bigg( \int_{E_{t}^{\infty}}w_{M^{0}}(z) dz =0\bigg).
\end{equation}
\end{prop}
As mentioned at the beginning of the section, this duality relation can be interpreted as follows. Whenever a reproduction event affects one area, all the individuals in the area can be considered as \textit{potential parents}. If $\overleftarrow{\Pi}$ encodes the reproduction events affecting the population when going \textit{backwards in time}, then $E_{t}^{\infty}$ encodes the locations of the potential ancestors at time~$0$ of the individuals living in $E^{0}$ at time $t$. The duality relation then states that a given area $E^{0}$ contains only type 0 individuals at time $t$ (\textit{i.e.}, is ``empty'' at time~$t$) if and only if all their potential ancestors at time $0$ (located in $E_{t}^{\infty}$) are type 0 individuals. Note however that the duality relation only relates the \textit{laws} of the Markov processes $M^{(\infty)}$ and $E^\infty$, as the two processes for now are not even constructed on the same probability space.

\begin{rem}The careful reader will note that in \cite{louvet2023}, the dual process of the $\infty$-parent SLFV is formulated as an $\mcal_\lambda$-valued process instead of a set-valued process: instead of considering $(E_t^\infty)_{t\geq 0}$, the dual process is taken to be $(m(E_t^\infty))_{t\geq 0}$, where $m(E)=\mathds{1}_E(x)dx$. This choice resulted from the fact that one was interested in the convergence of the $k$-parent processes to their $\infty$-parent analogues as $k$ tended to infinity, for which it was more convenient to have all dual processes defined as finite measures on $\rde$. The two choices of encoding of the dual processes are equivalent for our purposes, as all formulae we exploit are integrals over the support of $E^\infty_t$. In this paper, we rather use the set-valued encoding, which alleviates the notation when we study properties of $E_t^\infty$.
\end{rem}

\subsection{Poisson point process-based construction}\label{subsec:poisson_construction}
As discussed right after the statement of Theorem~\ref{thm:definition_SLFV}~$(ii)$ in Section~\ref{subsec:defn_martingale_problem}, a solution to the martingale problem~\eqref{MPinfty} (and therefore a construction of the $\infty$-parent SLFV itself) was first obtained as the limit as $k\rightarrow \infty$ of a sequence of coupled $k$-parent SLFV. The coupling relies on a formalisation of the Poisson point process-based construction of SLFV described in the introduction, which was initially introduced in \cite{veber2015spatial} (for the ``classical'' SLFV process).

We present this construction here as it will allow us to show \textit{a priori} intuitive properties of the $k$-parent and $\infty$-parent SLFV which cannot be shown as easily with the martingale problem characterisation, namely, that:
\begin{enumerate}
\item The value taken by the process on a compact subset of $\mathbb{R}^{2}$ is updated at a finite rate.
\item Each (local) update of the process is done in an area shaped as an ellipse.
\item The occupied area in the $\infty$-parent SLFV can only increase as time goes on.
\end{enumerate}

For all $(a,b,\gamma) \in S_{\mu}$, let $\tilde{u}_{(a,b,\gamma)}$ be the law of a sequence of i.i.d. random variables $(\mathcal{P}_{n})_{n \geq 1}$ uniformly distributed over $\bfrak_{a,b,\gamma}(0)$.

Let $\Pi$ be a Poisson point process on $\mathbb{R}_{+} \times \mathbb{R}^{2} \times S_{\mu}$ with intensity measure $dt \otimes dz \otimes \mu(da,db,d\gamma)$, defined on the probability space $(\Omega, \mathcal{F}, \proba)$. We extend $\Pi$ by assigning to each point $(t,z,a,b,\gamma) \in \Pi$ a $\bfrak_{a,b,\gamma}(0)$-valued sequence $(p_{n})_{n \geq 1}$ sampled according to $\tilde{u}_{(a,b,\gamma)}$. The resulting point process will be denoted as $\Pi^{+}$, and will be referred to as the \textit{extended Poisson point process}. This Poisson point process encodes both the stochasticity in the location of reproduction events and in the choice of (the locations of) potential parents. Therefore, we can use it to provide a \textit{quenched} construction of the $k$-parent ancestral process.

\begin{definition}[Quenched $k$-parent ancestral process]\label{defn:quenched k-parent}
Let $k \geq 2$, let $\Xi^{0} \in \mcal_{p}(\mathbb{R}^{2})$, and let $\tilde{t} \geq 0$. The $k$-parent ancestral process $(\Xi_{k,t}^{\Pi^{+},\tilde{t},\Xi^{0}})_{0\leq t\leq \tilde{t}}$ associated with $\Pi^{+}$, started at time $\tilde{t}$ and with initial condition $\Xi^{0}$ is the $\mcal_{p}(\mathbb{R}^{2})$-valued Markov jump process defined as follows.
\begin{itemize}
\item First, we set $\Xi_{k,0}^{\Pi^{+},\tilde{t},\Xi^{0}} = \Xi^{0}$.
\item Then, for all $(t,z,a,b,\gamma,(p_{n})_{n \geq 1}) \in \Pi^{+}$ such that $t \leq \tilde{t}$, recalling that for $\Xi = \sum_{i = 1}^{l} \delta_{\xi_{i}} \in \mcal_{p}(\mathbb{R}^{2})$, $I_{z,a,b,\gamma}(\Xi) = \{i \in \{ 1,\ldots,l\}: \xi_{i} \in \bfrak_{a,b,\gamma}(z)\}$, if
\begin{equation*}
I_{z,a,b,\gamma}\left(\Xi_{k,(\tilde{t}-t)-}^{\Pi^{+},\tilde{t},\Xi^{0}}\right) \neq \emptyset,
\end{equation*}
then for all $1 \leq l \leq k$, we set
\begin{align*}
z_{l} &:= z + p_{l}\\
\text{and \quad\quad   }
\Xi_{k,\tilde{t}-t}^{\Pi^{+},\tilde{t},\Xi^{0}}
&:= \Xi_{k,(\tilde{t}-t)-}^{\Pi^{+},\tilde{t},\Xi^{0}}
- \sum_{z' \in I_{z,a,b,\gamma}\big(\Xi_{k,(\tilde{t}-t)-}^{\Pi^{+},\tilde{t},\Xi^{0}}\big)} \delta_{z'}
+ \sum_{l = 1}^{k} \delta_{z_{l}}.
\end{align*}
\end{itemize}
\end{definition}
For all $\Xi = \sum_{i = 1}^{l} \delta_{\xi_{i}} \in \mcal_{p}(\mathbb{R}^{2})$, we denote the set of atoms of $\Xi$ by
\begin{equation*}
A(\Xi) := \{
\xi_{i} : i \in \{ 1,\ldots,l\}
\}.
\end{equation*}
We can then construct the $k$-parent SLFV associated with $\mu$ and with initial condition $M^{0} \in \mcal_{\lambda}$. Let $\texttt{w} : \mathbb{R}^{2} \to \{0,1\}$ be a density of $M^{0}$ and let us set $w_{k,0}^{\Pi^{+},\texttt{w}} = \texttt{w}$. Then, for all $t>0$ and for all $z \in \rde$, let us set
\begin{equation}\label{eqn:densite_quenched_k_SLFV}
w_{k,t}^{\Pi^{+},\texttt{w}}(z) := 1-\left(\prod_{z' \in A\big(\Xi_{k,t}^{\Pi^{+},t,\delta_{z}}\big)} \left(1-\texttt{w}(z')\right)\right).
\end{equation}
Finally, for all $t \geq 0$, let us define
\begin{equation}\label{eqn:quenched SLFV}
M_{k,t}^{\Pi^{+},\texttt{w}}
:= \Big(w_{k,t}^{\Pi^{+},\texttt{w}}(z) \delta_{1}(d\kappa)
+ \big(1-w_{k,t}^{\Pi^{+},\texttt{w}}(z)\big) \delta_{0}(d\kappa)\Big)dz.
\end{equation}
Although each $w_{k,t}^{\Pi^{+},\texttt{w}}$ is a random \textit{function}, we shall call it the \textit{density} at time $t$ of the $k$-parent SLFV associated with $\Pi^{+}$ and with initial condition~$\texttt{w}$.

We have the two following results, whose proofs are identical respectively to the proofs of Lemma~3.5 and Theorem~2.14 in \cite{louvet2023}.
\begin{lem}\label{lem:poisson_constr_k_slfv}
The $D_{\mcal_{\lambda}}[0,+\infty)$-valued process \textit{$(M_{k,t}^{\Pi^{+},\texttt{w}})_{t \geq 0}$} defined above is solution to the martingale problem~\eqref{MPk}. Therefore, it is distributed like the $k$-parent SLFV.
\end{lem}

Since, as $k$ increases, we use an increasing number of locations during each event to decide the value of each $w_{k,t}^{\Pi^{+},\texttt{w}}(z)$, this sequence is almost surely nondecreasing for all $t \geq 0$ and $z \in \mathbb{R}^{2}$ and we can set
\begin{equation}\label{defn:w infty}
w_{\infty,t}^{\Pi^{+},\texttt{w}}(z) = \lim \limits_{k \to + \infty} w_{k,t}^{\Pi^{+},\texttt{w}}(z),
\end{equation}
In fact, since the only source of randomness in the construction of these densities is the common sequence of events associated with $\Pi^+$, we even have that with probability~1, such a limit exists simultaneously for all $t$ and $z$. Then, for every $t \geq 0$ we define
\begin{equation}\label{defn:M infty}
M_{\infty,t}^{\Pi^{+},\texttt{w}} := \Big(w_{\infty,t}^{\Pi^{+},\texttt{w}}(z)\delta_{1}(d\kappa)
+ \big(1-w_{\infty,t}^{\Pi^{+},\texttt{w}}(z)\big)\delta_{0}(d\kappa)\Big)dz.
\end{equation}

\begin{lem}\label{lem:poisson_constr_infty_slfv}
The $D_{\mcal_{\lambda}}[0,+\infty)$-valued process $(M_{\infty,t}^{\Pi^{+},\texttt{w}}(z))_{t\geq 0}$ defined above is solution to the martingale problem~\eqref{MPinfty}.
\end{lem}

Additionally, the densities $(w_{k,t}^{\Pi^{+},\texttt{w}})_{t \geq 0}$, $k \geq 2$ and $(w_{\infty,t}^{\Pi^{+},\texttt{w}})_{t \geq 0}$ satisfy the following properties, which have straightforward implications for the corresponding measure-valued evolutions.

\begin{lem}\label{lem:property_1_density_slfv}
Let $E \subseteq \rtwo$ be a bounded subset of $\rtwo$. For any $k\in \{2,3,\ldots,+\infty\}$, the value of \textit{$(w_{k,t}^{\Pi^{+},\texttt{w}})_{t \geq 0}$} over $E$ is updated at a bounded rate.

Moreover, if $E$ has non-zero Lebesgue measure and $(M_{t}^{(k)})_{t \geq 0}$ is the $k$-parent SLFV associated with $\mu$ and with initial condition $M^{0}$, then the value of $(\int_{E} w_{M_{t}^{(k)}}(z)dz)_{t \geq 0}$ is updated at a finite rate.
\end{lem}

\begin{proof}
The first part of Lemma~\ref{lem:property_1_density_slfv} is a consequence of the fact that $E$ is affected by reproduction events in $\Pi^{+}$ at a finite rate which is independent of the time interval we consider. The second part is a consequence of Lemmas~\ref{lem:poisson_constr_k_slfv} and~\ref{lem:poisson_constr_infty_slfv} along with uniqueness of the solution to the martingale problems~\eqref{MPk} and \eqref{MPinfty} which ensures that $M_{k,\cdot}^{\Pi^{+},\texttt{w}} $ and $M^{(k)}$ have the same distribution.
\end{proof}

\begin{lem}\label{lem:property_2_density_slfv}
Let $k\in \{2,3,\ldots,+\infty\}$. Let $z \in \rtwo$ and suppose that there exists $\epsilon_{0} > 0$ such that for all $z' \in \bcal_{\epsilon_{0}}(z)$, \textit{$\texttt{w}(z') = \texttt{w}(z)$}. Then, for every $t \geq 0$, with probability~$1$ there exists $\epsilon(z,t) > 0$ such that for all $z' \in \bcal_{\epsilon(z,t)}(z)$, we have \textit{$w_{k,t}^{\Pi^{+},\texttt{w}}(z') = w_{k,t}^{\Pi^{+},\texttt{w}}(z)$}.

As a consequence, with probability~1 there exists $\tilde{\epsilon}(z,t)>0$ such that the quantity
\begin{equation*}
V_{\epsilon}^{-1}  \int_{\bcal_{\epsilon}(z)} w_{M_{t}^{(k)}}(z')dz'
\end{equation*}
seen as a function of $\epsilon$ is constant over $(0,\tilde{\epsilon}(z,t)]$, equal to $0$ or $1$. (Recall the $V_{\epsilon}$ is the volume of a ball of radius $\epsilon$.)
\end{lem}

\begin{proof}
By Lemma~\ref{lem:property_1_density_slfv}, for a given $\delta\in (0,\epsilon_0)$, the value of $(w_{k,s}^{\Pi^{+},\texttt{w}})_{s \geq 0}$ over the ball $\bcal_\delta(z)$ changes finitely many times over the time interval $[0,t]$ with probability $1$. Let $U$ be the union of the areas of these events which overlap $\bcal_\delta(z)$ but do not contain $z$ if such events exist, and $U=\emptyset$ otherwise. The probability that $z\in \partial U$ is $0$ (since event centres are sampled uniformly in space), and so with probability~1, $z$ belongs to the interior of $U^c$ and we can find $\epsilon(z,t)>0$ such that $\bcal_{\epsilon(z,t)}(z) \subset \bcal_{\delta}(z)\cap U^c$. By construction, each point $z'\in \bcal_{\epsilon(z,t)}(z)$ is affected by exactly the same events as $z$ during the time interval $[0,t]$ and since the values of the density at $z$ and $z'$ at time $0$ are equal, so are they at time $t$.

The second part of the lemma is a consequence of Lemmas~\ref{lem:poisson_constr_k_slfv} and~\ref{lem:poisson_constr_infty_slfv} giving equality in distribution of $M^{(k)}$ and $M_{k,t}^{\Pi^{+},\texttt{w}}$ (note that for each $t,z$, the variables $\epsilon(z,t)$ and $\tilde{\epsilon}(z,t)>0$ are only equal in distribution).
\end{proof}
\begin{rem}\label{rem:local constant}
Note from the proof of Lemma~\ref{lem:property_2_density_slfv} that if $z$ is overlapped by at least one reproduction event in $[0,t]$, then by construction the value of \textit{$w_{k,t}^{\Pi^{+},\texttt{w}}(z')$} is the same for all $z'\in \bcal_{\epsilon(z,t)}(z)$ (whatever the value of \textit{$\texttt{w}$}). The assumption that \textit{\texttt{w}} should be locally constant around $z$ in Lemma~\ref{lem:property_2_density_slfv} is thus only useful in the case (of positive probability) where no events overlap $z$ during the interval $[0,t]$, in which case \textit{$w_{k,t}^{\Pi^{+},\texttt{w}}$} and \textit{$\texttt{w}$} are equal over $\bcal_{\epsilon(z,t)}(z)$ (and so the former is a locally constant function only if the latter is).
\end{rem}

\subsection{The $\infty$-parent SLFV as a random growing set}\label{subsec:growing set}
The informal construction of the $\infty$-parent SLFV suggests that the occupied area is a random growing set. However, this is not a immediate consequence of any of the two possible constructions of the $\infty$-parent SLFV we just introduced. The goal of this section is to show how we can use these two complementary constructions to prove that the $\infty$-parent SLFV corresponds to a random growing set to some extent.

Recall the definition of occupancy of a point given in Definition~\ref{defn:occupied_area}. In the case of the $k$-parent and $\infty$-parent SLFV, by Lemma~\ref{lem:property_1_density_slfv}, the occupancy of a location in space is updated at a finite rate. Moreover, the limit appearing in Definition~\ref{defn:occupied_area} is generally $\{0,1\}$-valued, in the following sense.
\begin{lem}\label{lem:occupancy_0_1_valued}
Let $M^0 \in \ml$, and let $z \in \rtwo$. Assume that there exists $\epsilon_{0} > 0$ such that
\begin{equation}\label{eqn:lem_occupancy}
V_{\epsilon_{0}}^{-1}\int_{\bcal_{\epsilon_{0}}(z)} w_{M^{0}}(z')dz' \in \{0,1\}.
\end{equation}
For $k\in \{2,3,\ldots,+\infty\}$, let $(M_{t}^{(k)})_{t \geq 0}$ be the $k$-parent SLFV associated with $\mu$ and with initial condition $M^{0}$. Then, for all $t \geq 0$,
\begin{equation*}
\lim\limits_{\epsilon \to 0} V_{\epsilon}^{-1} \int_{\bcal_{\epsilon}(z)} w_{M_{t}^{(k)}}(z')dz' \in \{0,1\}
\quad
\text{a.s.}
\end{equation*}
\end{lem}

\begin{proof}
By (\ref{eqn:lem_occupancy}), we can choose $w_{M^{0}}$ in such a way that for all $z' \in \bcal_{\epsilon_{0}}(z)$, $w_{M^{0}}(z') = w_{M^{0}}(z)$. We can then conclude by using Lemma~\ref{lem:property_2_density_slfv}.
\end{proof}
Note that, here again, the assumption on $w_{M^0}$ is only used to ensure that the conclusion of the lemma holds true even if $z$ is not overlapped by an event during the interval $[0,t]$ (\textit{cf.} Remark~\ref{rem:local constant}).

Informally, the following lemma states that, once a location is reached by the expansion in the $\infty$-parent SLFV, it is still occupied at any later time with probability~1. Note that, as local backtracking of the area occupied by type~1 individuals is possible in the $k$-parent SLFV when $k<\infty$, we do not have an equivalent statement for $M^{(k)}$.

\begin{lem}\label{lem:random_growing_set}
Let $M^{0} \in \ml$. Let $(M_{t}^{(\infty)})_{t \geq 0}$ be the $\infty$-parent SLFV associated with $\mu$ and with initial condition $M^{0}$. For every $z \in \rtwo$ and $\epsilon > 0$, let
\begin{align*}
\tau_{z}^{(\epsilon)} &:= \min \left\{
s \geq 0 : V_{\epsilon}^{-1} \int_{\bcal_{\epsilon}(z)} w_{M_{s}^{(\infty)}}(z')dz' > 0
\right\} \\
\text{and } \quad \tau_{z} &:= \min \left\{
s \geq 0 : \lim\limits_{\epsilon ' \to 0} V_{\epsilon '}^{-1} \int_{\bcal_{\epsilon '}(z)} w_{M_{s}^{(\infty)}}(z')dz' > 0
\right\}.
\end{align*}
Then for all $z \in \rtwo$, $\epsilon > 0$ and $t \geq 0$, we have
\begin{align*}
\proba\left(\left.
V_{\epsilon}^{-1} \int_{\bcal_{\epsilon}(z)} w_{M_{t}^{(\infty)}}(z')dz' > 0\,
\right|
\tau_{z}^{(\epsilon)} \leq t
\right) &= 1 \\
\text{and} \quad \proba\left(\left.
\lim\limits_{\epsilon ' \to 0} V_{\epsilon '}^{-1} \int_{\bcal_{\epsilon '}(z)} w_{M_{t}^{(\infty)}}(z')dz' > 0\,
\right|
\tau_{z} \leq t
\right) &= 1.
\end{align*}
\end{lem}
Observe that $\tau_{z}^{(\epsilon)}$ and $\tau_z$ can be defined as minima and not infima thanks to the result of Lemma~\ref{lem:property_1_density_slfv} (by taking $E=\bcal_\epsilon(z)$ for instance) and the fact that the trajectories of $M^{(\infty)}$ are right-continuous by construction.

\begin{proof}
Let $\epsilon > 0$, $z \in \rtwo$ and $t \geq 0$. Let $\Pi^{+}$ be the extended Poisson point process defined in Section~\ref{subsec:poisson_construction},
let $\texttt{w} : \rtwo \to \{0,1\}$ be a density of $M^{0}$ and let $w_{\infty,t}^{\Pi^{+},\texttt{w}}$ be the density of $M_t^{(\infty)}$ given by (\ref{defn:w infty}). Recall that the densities $w_{\infty,t}^{\Pi^{+},\texttt{w}}$ are deterministic functions of $\Pi^+$ and $\texttt{w}$, and therefore we shall reason in terms of the probability that $\Pi^+$ generates an appropriate sequence of events (including the choice of the sequence of `parental' locations) for the results of Lemma~\ref{lem:random_growing_set} to hold true.

First, observe that since the value of the density in any given ball around $z$ is updated at finite rate by Lemma~\ref{lem:property_1_density_slfv}, the random times $\tau_{z}^{(\epsilon)}$ and $\tau_{z}$ necessarily belong to the set of times of events in $\Pi^+$ overlapping $\bcal_{\epsilon}(z)$, and the intersection of this set with a given interval $[0,t]$ is a.s. finite.

Let us work conditional on the event $\{\tau_{z}^{(\epsilon)} \leq t\}$. The probability that the intersection of $\bcal_{\epsilon}(z)$ and the area $\bfrak_{a,b,\gamma}$ of the event in $\Pi^+$ occurring at time $\tau_{z}^{(\epsilon)}$ is of positive volume is $1$, consequently we can a.s. find $\tilde{z}\in \bcal_{\epsilon}(z)$ and $\tilde{\epsilon}>0$ such that $\bcal_{\tilde{\epsilon}}(\tilde{z})\subset \bcal_{\epsilon}(z)\cap \bfrak_{a,b,\gamma}$ and $w_{\infty,\tau_{z}^{(\epsilon)}}^{\Pi^{+},\texttt{w}}$ is constant over $\bcal_{\tilde{\epsilon}}(\tilde{z})$.
Moreover, by definition of $\tau_{z}^{(\epsilon)}$, the event occurring at that time renders the fraction of occupied area in $\bcal_{\epsilon}(z)$ positive, and so necessarily it fills $\bcal_{\tilde{\epsilon}}(\tilde{z})$ with type~1 individuals. That is,
\begin{equation}\label{eqn:area_occupied}
w_{\infty,\tau_{z}^{(\epsilon)}}^{\Pi^{+},\texttt{w}}(z') = 1, \qquad \hbox{for all }z' \in \bcal_{\tilde{\epsilon}}(\tilde{z}).
\end{equation}
Furthermore, by construction of the sequence of densities $w_{k,\cdot}^{\Pi^{+},\texttt{w}}$ (considering one more `parental' location during each event to pass from $k$ to $k+1$) and their increasing limit $w_{\infty,\cdot}^{\Pi^{+},\texttt{w}}$, with probability~$1$ there exists $k_{0} \geq 2$ such that for all $z' \in \bcal_{\tilde{\epsilon}}(\tilde{z})$ and $k \geq k_{0}$,
\begin{equation}\label{eqn:approx}
w_{k,\tau_{z}^{(\epsilon)}}^{\Pi^{+},\texttt{w}}(z')
= w_{\infty,\tau_{z}^{(\epsilon)}}^{\Pi^{+},\texttt{w}}(z') = 1.
\end{equation}

Let us now consider the time interval $[\tau_{z}^{(\epsilon)},t]$. Again, the number of events in $\Pi^+$ whose area overlaps $\bcal_{\tilde{\epsilon}}(\tilde{z})$ occurring in this time interval is almost surely finite. During each of these events, parental locations are sampled in an i.i.d. manner, uniformly over the area of the event, and so a.s. there exists a finite $n$ (depending on the event considered) such that the $n$-th sampled location belongs to $\bcal_{\tilde{\epsilon}}(\tilde{z})$. Consequently, for each given realisation of $\Pi^+$ we can take the maximum of $k_0$ and of the finitely many integers $n$ given by each event overlapping $\bcal_{\tilde{\epsilon}}(\tilde{z})$. Calling $k^*$ this integer, we see that for every $k\geq k^*$, the ancestral process $(\Xi_{k,s}^{\Pi^{+},t,\delta_{z'}})_{s\in [0,t-\tau_{z}^{(\epsilon)}]}$ starting at time $t$ from any point $z'\in \bcal_{\tilde{\epsilon}}(\tilde{z})$ constantly contains at least one atom in $\bcal_{\tilde{\epsilon}}(\tilde{z})$ until $s=t-\tau_{z}^{(\epsilon)}$. Together with \eqref{eqn:approx} which ensures that we can find a sequence of atoms at ``occupied'' points in the ancestry of $z'$ when going back from (forward) time $\tau_{z}^{(\epsilon)}$ to $0$, this allows us to conclude that for such a realisation of $\Pi^+$, for every $k\geq k^*$ and $z'\in \bcal_{\tilde{\epsilon}}(\tilde{z})$, we have
$$
w_{k,t}^{\Pi^{+},\texttt{w}}(z') = w_{\infty,\tau_{z}^{(\epsilon)}}^{\Pi^{+},\texttt{w}}(z') = 1.
$$
As $\bcal_{\tilde{\epsilon}}(\tilde{z})\subset \bcal_{\epsilon}(z)$, we obtain that
$$
V_{\epsilon}^{-1} \int_{\bcal_{\epsilon}(z)} w_{M_{t}^{(\infty)}}(z')dz' > 0.
$$
Since this is true for any realisation of $\Pi^+$ such that $\tau_{z}^{(\epsilon)}\leq t$, the first part of Lemma~\ref{lem:random_growing_set} is shown.

The proof of the second property stated in Lemma~\ref{lem:random_growing_set} follows the same lines, with $\tilde{z}=z$ and the a.s. finite set of times of interest being now the times at which the point $z$ is overlapped by the area of an event in $\Pi^+$. We do not repeat it.
\end{proof}

\section{Growth dynamics at the front edge - A \textit{backwards in time} viewpoint}\label{sec:reformulation}

We now turn to our case study of growth from an infinite interface. Recall that $H$ is the half-plane
$$
H= \{(x,y) \in \rtwo : x < 0\}
$$
and that in all that follows, the initial condition for the different SLFV processes we consider is
\begin{equation*}
M^{H}(dz,d\kappa) := \big(\mathds{1}_H(z)\delta_{1}(d\kappa) + \mathds{1}_{H^c}(z)\delta_{0}(d\kappa)\big)dz.
\end{equation*}
We write $w^H=\mathds{1}_H$ for its density.

In this section, we reformulate the expected results of Theorems~\ref{thm:speed_growth_infty_slfv} and~\ref{thm:speed_growth_k_slfv} in terms of the dual processes $\Xi^k$ and $\Xi^\infty$, to prepare the ground for their proofs via standard tools from percolation theory in Sections~\ref{sec:lower_bound}, \ref{sec:upper_bound} and \ref{sec:k-parent}.

\subsection{A \textit{backwards in time} version of Theorem \ref{thm:speed_growth_infty_slfv}}\label{subsec:back infty}
First, we reformulate Theorem~\ref{thm:speed_growth_infty_slfv} in terms of the growth properties of the $\infty$-parent ancestral process. Recall that $\mathcal{E}^{cf}$ is the set of all finite unions of measurable connected subsets of $\rtwo$ with positive and finite Lebesgue measure. Because we shall need to start our ancestral process from single points $(x,0)$, which are \textit{not} an element of $\mathcal{E}^{cf}$ (since they do not have positive Lebesgue measure), let us introduce a slight generalisation of the $\infty$-parent ancestral process which allows us to take such points as initial conditions. This process is defined on the state space
\begin{equation*}
\widetilde{\mathcal{E}}^{cf} := \mathcal{E}^{cf} \cup \big\{\{z\} : z \in \mathbb{R}^{2}\big\}.
\end{equation*}

Recall also that $\overleftarrow{\Pi}$ is a Poisson point process with intensity measure $dt \otimes dz \otimes \mu(da, db, d\gamma)$ on $\mathbb{R}_{+} \times \mathbb{R}^{2} \times S_{\mu}$ (see Definition~\ref{defn:k dual process}).
\begin{definition}\label{defn:better_infty_dual}
Let $z_{0} \in \mathbb{R}^{2}$. The $\widetilde{\mathcal{E}}^{cf}$-valued $\infty$-parent ancestral process $(E_{t}^{\infty,z_0})_{t \geq 0}$ with initial condition $\{z_{0}\}$ and associated with $\mu$ is defined as follows.

First, we set $E_{0}^{\infty,z_{0}} = \{z_{0}\}$. Then, for all $(t,z,a,b,\gamma) \in \overleftarrow{\Pi}$:
\begin{itemize}
\item If $E_{t-}^{\infty,z_{0}} = \{z_{0}\}$ and $z_{0} \in \bfrak_{a,b,\gamma}(z)$, we set $E_{t}^{\infty,z_{0}} = \bfrak_{a,b,\gamma}(z)$.
\item If $E_{t-}^{\infty,z_{0}} \neq \{z_{0}\}$ and $E_{t-}^{\infty,z_{0}} \cap \bfrak_{a,b,\gamma}(z)$ has non-zero Lebesgue measure, we set
\begin{equation*}
E_{t}^{\infty,z_{0}} = E_{t-}^{\infty,z_{0}} \cup \bfrak_{a,b,\gamma}(z).
\end{equation*}
\end{itemize}
\end{definition}

As for the initial $\infty$-parent ancestral process, $(E_{t}^{\infty,z_{0}})_{t \geq 0}$ is well-defined and Markovian. Moreover, once it jumps for the first time, its behaviour is identical to that of the original $\infty$-parent ancestral process.

For all $x > 0$ and all $\epsilon > 0$, let $(E_{t}^{\epsilon,x})_{t \geq 0}$ be the $\infty$-parent ancestral processes with initial condition $\bcal_{\epsilon}((x,0))$ constructed by using the events of $\overleftarrow{\Pi}$. Moreover, let $(E_{t}^{x})_{t \geq 0}$ be the $\infty$-parent ancestral process with initial condition $\{(x,0)\}$ also constructed using $\overleftarrow{\Pi}$. That is, in the heavier notation of Definition~\ref{defn:better_infty_dual}, $E^x=E^{\infty,(x,0)}$. These coupled ancestral processes satisfy the following property.

\begin{lem}\label{lem:lem1}
For all $x > 0$, if $t_{0}^{x}$ is the first time at which $(x,0)$ is affected by a reproduction event (that is, if $t_{0}^{x}$ is the first time at which $(E_{t}^{x})_{t \geq 0}$ jumps), then a.s. there exists $\epsilon_{0}^{x} > 0$ such that for all $t \geq t_{0}^{x}$, we have
\begin{equation*}
\forall\, \epsilon < \epsilon_{0}^{x},\qquad E_{t}^{\epsilon,x} = E_{t}^{x}.
\end{equation*}
Furthermore, we have with probability~1
\begin{equation*}
\lim\limits_{\epsilon \to 0} \Vol\left(E_{t}^{\epsilon,x} \cap H\right) = \Vol\left(
E_{t}^{x} \cap H \right) \qquad \hbox{for all }t\geq 0.
\end{equation*}
\end{lem}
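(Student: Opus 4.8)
The statement has two parts, and I would prove them in the order given, since the second builds on the first. The overall strategy is to exploit the fact that all the processes $(E_t^{\epsilon,x})_{t\geq 0}$ and $(E_t^x)_{t\geq 0}$ are driven by the \emph{same} Poisson point process $\overleftarrow{\Pi}$, so they are coupled realisation-by-realisation. The key structural observation is that once the singleton $\{(x,0)\}$ is hit by its first reproduction event at time $t_0^x$ — say an event of centre $z$ and parameters $(a,b,\gamma)$ with $(x,0)\in\bfrak_{a,b,\gamma}(z)$ — the process $E^x$ jumps to the full ellipse $\bfrak_{a,b,\gamma}(z)$, which is an open set containing $(x,0)$ in its interior (a.s., since the set of parameters landing $(x,0)$ exactly on the boundary is $\overleftarrow{\Pi}$-null). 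Hence there is a small radius $\epsilon_0^x>0$ such that $\bcal_{\epsilon_0^x}((x,0))\subseteq\bfrak_{a,b,\gamma}(z)$, and for every $\epsilon<\epsilon_0^x$ the ball $\bcal_\epsilon((x,0))$ is \emph{already contained} in the first ellipse that hits $(x,0)$.

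\textbf{First part.} I would argue that up to time $t_0^x$, the ball process $E^{\epsilon,x}$ and the singleton process $E^x$ see exactly the same relevant events, in the following sense: for $\epsilon$ small enough, no event of $\overleftarrow{\Pi}$ occurring before $t_0^x$ intersects $\bcal_\epsilon((x,0))$ with positive measure without also being the event that creates $t_0^x$. This requires ruling out that some ellipse overlaps the tiny ball $\bcal_\epsilon((x,0))$ (triggering a jump of $E^{\epsilon,x}$) while missing the point $(x,0)$ itself (so $E^x$ does not jump). Because $\overleftarrow{\Pi}$ has only finitely many events intersecting any fixed bounded region in $[0,t_0^x]$, and each ellipse is either open-containing $(x,0)$ or stays a positive distance away from it, one can choose $\epsilon_0^x$ below that positive distance; then for $\epsilon<\epsilon_0^x$ the first event to give $E^{\epsilon,x}$ positive-measure overlap is precisely the event at $t_0^x$, at which both processes jump to a set containing $\bfrak_{a,b,\gamma}(z)$. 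After $t_0^x$, both $E^{\epsilon,x}$ and $E^x$ contain $\bfrak_{a,b,\gamma}(z)$ and are driven by the same $\overleftarrow{\Pi}$, so by induction on the (locally finite) sequence of subsequent events that intersect the common growing region, they remain equal: $E_t^{\epsilon,x}=E_t^x$ for all $t\geq t_0^x$ and all $\epsilon<\epsilon_0^x$. This is the clean, deterministic-given-$\overleftarrow{\Pi}$ coupling argument.

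\textbf{Second part.} For the volume limit, I would treat the two regimes separately. For $t\geq t_0^x$, the first part gives $E_t^{\epsilon,x}=E_t^x$ for all small $\epsilon$, so the intersected volumes are literally equal for $\epsilon<\epsilon_0^x$ and the limit is immediate. For $t<t_0^x$, the singleton process is still $E_t^x=\{(x,0)\}$, which has $\Vol(E_t^x\cap\overline{HP}^0)=0$; on the other side, $E_t^{\epsilon,x}$ is either still the small ball $\bcal_\epsilon((x,0))$ or a union of at most finitely many ellipses that it has triggered, all contained in a $\rcal_{\tilde\mu}$-neighbourhood of $\bcal_\epsilon((x,0))$. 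As $\epsilon\to 0$ this set shrinks (using that only finitely many events can be triggered and each has volume $\to 0$ of overlap contribution, or more carefully that the triggering events themselves disappear in the limit since they require positive overlap with $\bcal_\epsilon((x,0))$), so $\Vol(E_t^{\epsilon,x}\cap\overline{HP}^0)\to 0=\Vol(E_t^x\cap\overline{HP}^0)$. Combining the two regimes gives the claimed a.s.\ convergence for all $t\geq 0$.

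\textbf{Main obstacle.} The delicate point is the handling of $t<t_0^x$ in the second part together with the a.s.\ uniformity in $t$: I must ensure that, in the limit $\epsilon\to 0$, the extra ellipses that the ball $\bcal_\epsilon((x,0))$ can trigger but the point $(x,0)$ cannot do not contribute persistent volume. The cleanest route is to show that, a.s., for each fixed $t$ there are only finitely many events in $[0,t]$ meeting any fixed neighbourhood of $(x,0)$, and that the minimal positive distance from $(x,0)$ to the boundaries of those ellipses not containing $(x,0)$ is strictly positive; choosing $\epsilon$ below this distance kills all spurious triggers, so in fact $E_t^{\epsilon,x}=E_t^x=\{(x,0)\}$ (up to the single triggering ball) for small $\epsilon$ even when $t<t_0^x$. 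Establishing this ``no spurious near-misses'' statement uniformly enough to conclude the volume limit is the step that needs the most care, and it is where the almost-sure null-set arguments about ellipse boundaries must be made precise.
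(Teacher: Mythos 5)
Your proposal is correct and takes essentially the same route as the paper: the paper's proof likewise uses the shared Poisson point process $\overleftarrow{\Pi}$, the a.s.\ interior containment of $(x,0)$ in the first ellipse, the a.s.\ finiteness of events meeting a fixed neighbourhood of $(x,0)$ during $[0,t_{0}^{x})$, and a choice of $\epsilon_{0}^{x}$ below the minimal positive distance to those finitely many closed ellipses (this is exactly Condition~\eqref{eqn:equation_epsilon}), so that for $\epsilon < \epsilon_{0}^{x}$ and $t < t_{0}^{x}$ one has $E_{t}^{\epsilon,x} = \bcal_{\epsilon}((x,0))$ with empty intersection with $\overline{HP}^{0}$ once $\epsilon < x/2$. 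The ``cleaner route'' you settle on in your final paragraph is precisely the paper's argument (your first, vaguer alternative of bounding vanishing overlap volumes would not work as stated, since a triggered ellipse has fixed size), and the per-$t$ uniformity you flag is automatic: for $t < t_{0}^{x}$ only events in $[0,t_{0}^{x})$ are relevant, so the single random $\epsilon_{0}^{x}$ serves all $t \geq 0$ simultaneously.
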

Concretely, $\epsilon_{0}^{x}$ is the radius of a ball around $(x,0)$ small enough to be included in the area of the first event overlapping $(x,0)$, and not to be overlapped by any event in $\overleftarrow{\Pi}$ occurring before $t_{0}^{x}$.
\begin{proof}
Let $x > 0$. By the same arguments as in the proof of Lemma~\ref{lem:property_2_density_slfv}, with probability~1 there exists $\epsilon_{0}^{x} > 0$ such that
\begin{align}
E_{t}^{\epsilon,x} &= E_{0}^{\epsilon,x} \qquad \hbox{for all } 0 < \epsilon \leq \epsilon_{0}^{x} \hbox{ and all } 0 \leq t < t_{0}^{x}, \hbox{ and} \label{eqn:lem_2_13} \\
E_{t_{0}^{x}}^{\epsilon,x} &= E_{t_{0}^{x}}^{x} \qquad \ \hbox{for all } 0 < \epsilon \leq \epsilon_{0}^{x}.  \nonumber
\end{align}
The first part of the lemma is then a consequence of the strong Markov property of the $\infty$-parent ancestral process applied at time $t_0^x$.

Then, on the event of probability~1 on which $\epsilon_{0}^{x}$ exists, considering radii small enough that $\bcal_\epsilon((x,0))\cap H=\emptyset= \{(x,0)\}\cap H$, we can separate two cases according to whether $t<t_{0}^{x}$ or $t\geq t_{0}^{x}$.
\begin{enumerate}
\item For $t \in [0,t_{0}^{x})$, for all $\epsilon \in (0,\min\{\epsilon_{0}^{x},x\})$ we have by (\ref{eqn:lem_2_13}):
$$
\Vol\left(E_{t}^{\epsilon,x} \cap H\right)= \Vol\left(\bcal_{\epsilon}((x,0)) \cap H\right)= 0 = \Vol\left(\{(x,0)\} \cap H\right)= \Vol\left(
E_{t}^{x} \cap H \right). $$
\item For $t \geq t_{0}^{x}$, the first part of Lemma~\ref{lem:lem1} tells us that $E_{t}^{\epsilon,x} = E_{t}^{x}$ for all $\epsilon \in (0,\epsilon_{0}^{x})$. Therefore,
\begin{equation*}
\Vol\left(E_{t}^{\epsilon,x} \cap H \right) = \Vol\left(E_{t}^{x} \cap H \right).
\end{equation*}
\end{enumerate}
This allows us to conclude.
\end{proof}

We can now introduce an equivalent of $\overrightarrow{\tau}\!_{x}^{\,(\infty)}$ for the $\infty$-parent ancestral process.
\begin{definition}\label{defn:tilde_tau_x}
For all $x > 0$, let
\begin{equation*}
\tilde{\tau}_{x}^{(\infty)} :=
\left\lbrace
\begin{array}{ll}
\min \left\{t \geq 0 :  \lim\limits_{\epsilon \to 0} \Vol\left(
E_{t}^{\epsilon,x} \cap H\right) > 0 \right\}  & \mbox{if there exists $\epsilon_{0}^{x}>0$ as in Lemma~\ref{lem:lem1},} \\
0 & \mbox{otherwise}.
\end{array}
\right.
\end{equation*}
\end{definition}

\begin{lem}\label{lem:tilde_tau_x} For all $x > 0$, $\tilde{\tau}_{x}^{(\infty)}$ is well-defined, and is almost surely equal to
\begin{align*}
&\min\left\{t \geq 0:
E_{t}^{x} \cap H \neq \emptyset
\right\}.
\end{align*}
In particular, $\tilde{\tau}_{x}^{(\infty)} \neq 0$ a.s.
\end{lem}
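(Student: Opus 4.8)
The plan is to peel the definition of $\tilde{\tau}_{x}$ away from the $\epsilon$-balls $E_{t}^{\epsilon,x}$ and rephrase it purely in terms of the point-started process $E_{t}^{x}$, and then to trade the positivity of a volume for the non-emptiness of an intersection. First I would work on the almost sure event on which both conclusions of Lemma~\ref{lem:lem1} hold: a radius $\epsilon_{0}^{x}>0$ satisfying \eqref{eqn:equation_epsilon} exists, and for every $t\geq 0$ one has $\lim_{\epsilon\to 0}\Vol(E_{t}^{\epsilon,x}\cap\overline{HP}^{0})=\Vol(E_{t}^{x}\cap\overline{HP}^{0})$. On this event we are in the first branch of Definition~\ref{defn:tilde_tau_x}, and the limit there can be replaced, giving
\[
\tilde{\tau}_{x}=\min\{t\geq 0:\Vol(E_{t}^{x}\cap\overline{HP}^{0})>0\}.
\]

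The central step is then to establish, for every $t\geq 0$, the equivalence
\[
\Vol(E_{t}^{x}\cap\overline{HP}^{0})>0\iff E_{t}^{x}\cap\overline{HP}^{0}\neq\emptyset.
\]
The implication $\Rightarrow$ is trivial. For $\Leftarrow$ I would split according to the structure of $E_{t}^{x}$ dictated by Definition~\ref{defn:better_infty_dual}: before its first jump $E_{t}^{x}=\{(x,0)\}$ with $x>0$, which is disjoint from the open half-plane $\overline{HP}^{0}=\{x<0\}$, so there is nothing to prove; after the first jump $E_{t}^{x}$ is a finite union of closed filled ellipses (Definition~\ref{defn:notation_ellipse}). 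If such a union meets $\overline{HP}^{0}$, one of the ellipses, say $B$, contains a point $p$ with negative abscissa. Since $B$ is a convex body with non-empty interior, $p$ lies in the closure of $\mathrm{int}(B)$, so $B$ has interior points of negative abscissa; a small ball around any of them is contained in $B\cap\overline{HP}^{0}$, which therefore has positive volume. Hence the two families of times coincide and $\tilde{\tau}_{x}=\min\{t\geq 0:E_{t}^{x}\cap\overline{HP}^{0}\neq\emptyset\}$, as claimed.

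It remains to see that this minimum is genuinely attained and strictly positive. Writing $V(t):=\Vol(E_{t}^{x}\cap\overline{HP}^{0})$, the process $E_{t}^{x}$ is constant equal to $\{(x,0)\}$ up to its first jump time $t_{0}^{x}$ and increasing for inclusion thereafter, with only finitely many jumps on any bounded interval; consequently $V$ is non-decreasing, right-continuous and piecewise constant, so $\{t:V(t)>0\}$ is a half-line $[\tilde{\tau}_{x},+\infty)$ whose left endpoint is attained at a jump time, yielding a true minimum. The set is almost surely non-empty (so $\tilde{\tau}_{x}<+\infty$) because the growing region $E_{t}^{x}$ almost surely eventually overlaps $\overline{HP}^{0}$, which one obtains from a second Borel–Cantelli argument applied to a fixed finite chain of overlapping reproduction events marching leftward from a neighbourhood of $(x,0)$ into $\overline{HP}^{0}$, each link having probability bounded below in successive unit time intervals. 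Finally, for $t<t_{0}^{x}$ we have $E_{t}^{x}=\{(x,0)\}$, disjoint from $\overline{HP}^{0}$ since $x>0$, so $\tilde{\tau}_{x}\geq t_{0}^{x}$; as the reproduction events covering the fixed point $(x,0)$ form a time-Poisson process of finite rate $\int_{S_{\tilde{\mu}}}V_{a,b,\gamma}\,\tilde{\mu}(da,db,d\gamma)$, we have $t_{0}^{x}>0$ almost surely, hence $\tilde{\tau}_{x}\neq 0$ a.s.

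The main obstacle is the equivalence between non-empty intersection and positive-volume overlap: $\tilde{\tau}_{x}$ is defined through volumes while the target expression only asks for non-emptiness, and bridging the gap requires the openness of $\overline{HP}^{0}$ together with the full-dimensionality and convexity of the ellipses, plus the exclusion of the degenerate single-point phase. By contrast, the delicate $\epsilon\to 0$ passage is already handled by Lemma~\ref{lem:lem1}, so this part is essentially a matter of careful bookkeeping.
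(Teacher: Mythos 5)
Your proof is correct and follows the same overall skeleton as the paper's: reduce to the point-started process via Lemma~\ref{lem:lem1}, equate positive-volume overlap with non-empty intersection, and get strict positivity from $\tilde{\tau}_{x} \geq t_{0}^{x} > 0$ using the boundedness of the support of $\tilde{\mu}$. Where you genuinely diverge is in the key equivalence step. The paper argues probabilistically: for each fixed $(a,b,\gamma) \in S_{\tilde{\mu}}$, the set of centres $z$ for which $\bfrak_{a,b,\gamma}(z)$ meets $\overline{HP}^{0}$ with zero-volume overlap is Lebesgue-null, so almost surely no event in the Poisson process realises such a tangential configuration. You instead give a deterministic geometric argument: since $\overline{HP}^{0} = \{(x,y): x<0\}$ is \emph{open} and each filled ellipse is a full-dimensional convex body (hence the closure of its interior), any point of an ellipse lying in $\overline{HP}^{0}$ is approximated by interior points which also have negative abscissa, forcing positive volume. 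Your argument is in fact tighter — it shows the paper's ``bad set'' of centres is not merely null but empty, so no almost-sure qualifier is needed for this step — though it relies on the openness of $\overline{HP}^{0}$; the paper's null-set argument would survive if the half-plane were closed, where genuine tangencies exist. You also supply two pieces of bookkeeping the paper leaves implicit: attainment of the minimum (via right-continuity and piecewise constancy of $t \mapsto \Vol(E_{t}^{x}\cap\overline{HP}^{0})$) and almost-sure finiteness of $\tilde{\tau}_{x}$ via a Borel--Cantelli chain of leftward-marching events. The latter sketch is terse but standard (independent unit-time blocks, each succeeding with a uniform positive probability); note that in the paper finiteness is anyway recovered later from the express-chain bounds of Section~\ref{sec:lower_bound}, so it is not strictly needed here. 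No gaps.
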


\begin{proof}
Let $x > 0$. By Lemma~\ref{lem:lem1}, $\tilde{\tau}_{x}^{(\infty)}$ is well-defined and almost surely equal to
\begin{equation*}
\min\Big\{ t \geq 0 : \Vol\left(E_{t}^{x} \cap H \right) > 0 \Big\} \geq t_{0}^{x} >0,
\end{equation*}
The fact that $t_{0}^{x}>0$ a.s. is a consequence of the fact that $\mu$ is a finite measure with compact support and so the point $(x,0)$ is overlapped by an event of $\overleftarrow{\Pi}$ at a finite rate. Now, since $H$ is an open set we have for all $(a,b,\gamma) \in S_{\mu}$
\begin{equation*}
\left\{z \in \mathbb{R}^{2} : \bfrak_{a,b,\gamma}(z) \cap H \neq \emptyset \text{ and } \Vol\left(
\bfrak_{a,b,\gamma}(z) \cap H\right) = 0\right\} = \emptyset,
\end{equation*}
and so no events can lead to a situation where $E_t^x\cap H\neq \emptyset$ and $\Vol(E_t^x\cap H)=0$. Therefore, we can write
\begin{equation*}
\min\big\{t \geq 0 : \Vol\left(E_{t}^{x} \cap H\right) > 0 \big\}
= \min\left\{t \geq 0 :
E_{t}^{x} \cap H \neq \emptyset
\right\} \text{ a.s.,}
\end{equation*}
which allows us to conclude.
\end{proof}

The random variable $\tilde{\tau}_{x}^{(\infty)}$ can be interpreted as the equivalent of $\overrightarrow{\tau}\!_{x}^{\,(\infty)}$ for the dual process in the following sense.
\begin{lem}\label{lem:dualite_tau_x}
For all $x > 0$, $\tilde{\tau}_{x}^{(\infty)}$ and $\overrightarrow{\tau}\!_{x}^{\,(\infty)}$ have the same distribution.
\end{lem}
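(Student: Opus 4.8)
The plan is to prove the stronger statement that $\proba(\overrightarrow{\tau}\!_{x} \le t) = \bP(\tilde{\tau}_{x} \le t)$ for every $t \ge 0$, which determines the common law of these two nonnegative random variables. I would first rewrite both tail events in a form amenable to the duality relation. On the forward side, combining the definition of $\overrightarrow{\tau}\!_{x}$ with Lemma~\ref{lem:taux_forwards} (which ensures that once the limiting ball-average at $(x,0)$ drops below $1$ it stays below $1$) and with the minimality in the definition, one gets that a.s.
\[
\{\overrightarrow{\tau}\!_{x} \le t\} = \Big\{ \lim_{\epsilon \to 0} V_{\epsilon}^{-1} \int_{\bcal_{\epsilon}((x,0))} \omega_{t}^{HP}(z)\,dz < 1 \Big\}.
\]
On the dual side, since $(E_{t}^{x})_{t\ge0}$ is nondecreasing for inclusion and, by the measure-zero argument in the proof of Lemma~\ref{lem:tilde_tau_x}, the event $E_{t}^{x} \cap \overline{HP}^{0} \neq \emptyset$ is a.s. equivalent to $\Vol(E_{t}^{x}\cap\overline{HP}^{0})>0$, Lemma~\ref{lem:tilde_tau_x} gives a.s. $\{\tilde{\tau}_{x} \le t\} = \{\Vol(E_{t}^{x}\cap\overline{HP}^{0})>0\}$.

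The bridge between the two sides is the duality identity at a fixed scale $\epsilon$. Exactly as in the first part of the proof of Lemma~\ref{lem:taux_forwards}, applying Proposition~\ref{prop:duality_relation} with $E^{0} = \bcal_{\epsilon}((x,0))$ and $M^{0} = M^{HP}$, and using that $1 - \omega_{M^{HP}} = \mathds{1}_{\overline{HP}^{0}}$ so that $\int_{E}(1-\omega_{M^{HP}}(z))\,dz = \Vol(E\cap\overline{HP}^{0})$, yields for every $\epsilon>0$
\[
\proba\Big( V_{\epsilon}^{-1} \int_{\bcal_{\epsilon}((x,0))} \omega_{t}^{HP}(z)\,dz < 1 \Big) = \bP\big( \Vol(E_{t}^{\epsilon,x}\cap\overline{HP}^{0})>0 \big).
\]
The remaining work is to let $\epsilon\to0$ on both sides and match the limits with the two tail probabilities above.

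For the dual (right-hand) side, I would use Lemma~\ref{lem:lem1}: a.s. there is $\epsilon_{0}^{x}>0$ such that $E_{t}^{\epsilon,x} = E_{t}^{x}$ for all $t\ge t_{0}^{x}$ and all $\epsilon<\epsilon_{0}^{x}$, while for $t<t_{0}^{x}$ both $E_{t}^{x}=\{(x,0)\}$ and (for $\epsilon<x/2$) $E_{t}^{\epsilon,x}=\bcal_{\epsilon}((x,0))\subseteq HP^{0}$ meet $\overline{HP}^{0}$ in a Lebesgue-null set. Hence, for fixed $t$, the indicator $\un{\Vol(E_{t}^{\epsilon,x}\cap\overline{HP}^{0})>0}$ is eventually constant in $\epsilon$ and equal to $\un{\Vol(E_{t}^{x}\cap\overline{HP}^{0})>0}$; dominated convergence then shows that the right-hand side converges to $\bP(\Vol(E_{t}^{x}\cap\overline{HP}^{0})>0) = \bP(\tilde{\tau}_{x}\le t)$.

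For the forward (left-hand) side, I would fix a sequence $\epsilon_{n}\downarrow0$ and use the up-set property recorded in the proof of Lemma~\ref{lem:taux_forwards}: since the balls are nested, if $V_{\epsilon_{n}}^{-1}\int_{\bcal_{\epsilon_{n}}((x,0))}\omega_{t}^{HP}(z)\,dz=1$ for some $n$ then the same holds for all larger indices, so the events $\{V_{\epsilon_{n}}^{-1}\int_{\bcal_{\epsilon_{n}}((x,0))}\omega_{t}^{HP}<1\}$ are nonincreasing and their probabilities converge to $\proba\big(\bigcap_{n}\{V_{\epsilon_{n}}^{-1}\int_{\bcal_{\epsilon_{n}}((x,0))}\omega_{t}^{HP}<1\}\big)$. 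The delicate point, which I expect to be the main obstacle, is to identify this intersection with $\{\lim_{\epsilon\to0}V_{\epsilon}^{-1}\int_{\bcal_{\epsilon}((x,0))}\omega_{t}^{HP}<1\} = \{\overrightarrow{\tau}\!_{x}\le t\}$. The inclusion $\subseteq$ is immediate from the up-set property, but the reverse requires ruling out the event that all ball-averages stay $<1$ while their limit equals $1$; equivalently, that a.s. the ball-averages at $(x,0)$ are eventually constant. I expect to obtain this from the local finiteness of $\overleftarrow{\Pi}$: a.s. only finitely many reproduction events meet a fixed bounded neighbourhood of $(x,0)$ on $[0,t]$, so near $(x,0)$ the occupied region is a finite union of ellipses, whence a density-$1$ point of the type-$0$ region already has an entire small ball of type $0$ around it, forcing some average to equal exactly $1$ — this is essentially the content extracted in the proof of Lemma~\ref{lem:taux_forwards}. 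Granting it, the left-hand side converges to $\proba(\overrightarrow{\tau}\!_{x}\le t)$, and combining with the dual side gives $\proba(\overrightarrow{\tau}\!_{x}\le t)=\bP(\tilde{\tau}_{x}\le t)$ for all $t\ge0$, hence $\overrightarrow{\tau}\!_{x}\stackrel{d}{=}\tilde{\tau}_{x}$.
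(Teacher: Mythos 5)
Your overall route coincides with the paper's. The paper also proves the identity scale by scale: it introduces $\overrightarrow{\tau}\!_{x}^{\epsilon}$ and $\tilde{\tau}_{x}^{\epsilon}$, shows via Proposition~\ref{prop:duality_relation} that $\proba(\overrightarrow{\tau}\!_{x}^{\epsilon} > t) = \bP(\tilde{\tau}_{x}^{\epsilon} > t)$ for every $\epsilon > 0$ (the same computation as your fixed-$\epsilon$ bridge, using $1-\omega^{HP} = \mathds{1}_{\overline{HP}^{0}}$ so that $\int_{E}(1-\omega^{HP}(z))dz = \Vol(E \cap \overline{HP}^{0})$), and then removes $\epsilon$. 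The only organisational difference is that the paper passes to the limit in the random times themselves ($\tilde{\tau}_{x}^{\epsilon} \to \tilde{\tau}_{x}$ a.s.\ as in Lemma~\ref{lem:tilde_tau_x}, and $\overrightarrow{\tau}\!_{x}^{\epsilon}$ nondecreasing as $\epsilon \downarrow 0$ with a.s.\ limit $\overrightarrow{\tau}\!_{x}$, both directions of the latter resting on Lemma~\ref{lem:taux_forwards}), whereas you pass to the limit in the probabilities of fixed-$t$ events; these are equivalent reorganisations, and your treatment of the dual side by Lemma~\ref{lem:lem1} plus dominated convergence, together with the identification $\{\tilde{\tau}_{x} \leq t\} = \{\Vol(E_{t}^{x} \cap \overline{HP}^{0}) > 0\}$ a.s., matches the paper.

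The one genuine defect is exactly at the point you flag, in the mechanism you propose to close it. The argument ``a.s.\ only finitely many reproduction events meet a bounded neighbourhood of $(x,0)$ on $[0,t]$, so near $(x,0)$ the occupied region is a finite union of ellipses'' is not available in this framework: first, $\overleftarrow{\Pi}$ drives the \emph{dual} process on $(\mathbf{\Omega},\mathbf{\fcal},\bP)$, not the forward SLFV $(M_{t}^{HP})_{t\geq 0}$; second, and more importantly, when $\Vol(\fE_{0}) = +\infty$ the forward process is defined only as a measure-valued solution of the martingale problem, and the paper states immediately after Lemma~\ref{lem:taux_forwards} that the pathwise set-valued description does not hold for such initial conditions --- this is precisely why Lemma~\ref{lem:taux_forwards} is itself proved through the dual rather than forward in time. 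So you cannot argue directly that a density-one point of the type-$0$ region carries a whole type-$0$ ball, forcing some average to equal exactly $1$. The fix is the one you half-anticipate: the a.s.\ dichotomy ``the limit of the ball averages equals $1$ if, and only if some ball average equals $1$'' is the content extracted in the proof of Lemma~\ref{lem:taux_forwards} (via duality and the monotonicity, at each fixed $\epsilon$, of the ball averages in $t$), and the paper's proof of the present lemma invokes that lemma at exactly the step where you place your local-finiteness heuristic. Replacing the heuristic by a citation of Lemma~\ref{lem:taux_forwards} (or by the paper's monotone-limit argument $\overrightarrow{\tau}\!_{x} = \lim_{\epsilon \to 0}\overrightarrow{\tau}\!_{x}^{\epsilon}$ a.s.) completes your proof, which then agrees with the paper's.
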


\begin{proof}
Let $x > 0$. For all $\epsilon > 0$, let us set
\begin{align*}
\overrightarrow{\tau}\!_{x}^{\, \epsilon,(\infty)} &:= \min\bigg\{
t \geq 0 : V_{\epsilon}^{-1} \int_{\bcal_{\epsilon}((x,0))}w_{t}^{(\infty)}(z)dz > 0
\bigg\}, \qquad \hbox{and} \\
\tilde{\tau}\!_{x}^{\ \epsilon,(\infty)} &:= \min\big\{ t \geq 0 : \Vol\left(E_{t}^{\epsilon,x} \cap H \right) > 0 \big\}.
\end{align*}
If we can show that
\begin{align}
&\limeps \overrightarrow{\tau}\!_{x}^{\,\epsilon,(\infty)} = \overrightarrow{\tau}\!_{x}^{\,(\infty)} \quad \text{a.s.,} \label{eqn:cond_1_lemma_2_16} \\
&\limeps \tilde{\tau}\!_{x}^{\ \epsilon,(\infty)} = \tilde{\tau}\!_{x}^{\,(\infty)} \quad \text{a.s.,} \label{eqn:cond_2_lemma_2_16} \\
&\overrightarrow{\tau}\!_{x}^{\,\epsilon,(\infty)} \stackrel{(d)}{=} \tilde{\tau}\!_{x}^{\ \epsilon,(\infty)}\quad \hbox{for all } \epsilon > 0, \label{eqn:cond_3_lemma_2_16}
\end{align}
then we can conclude that $\tilde{\tau}\!_{x}^{\,(\infty)}$ and $\overrightarrow{\tau}\!_{x}^{\,(\infty)}$ have the same distribution.

Let us start with the proof of \eqref{eqn:cond_1_lemma_2_16}. For all $0 < \epsilon < \epsilon '$, as $w_{\overrightarrow{\tau}\!_{x}^{\,\epsilon,(\infty)}}^{(\infty)}$ is $\{0,1\}$-valued, we can write
$$
\int_{\bcal_{\epsilon '}((x,0))} w^{(\infty)}_{\overrightarrow{\tau}\!_{x}^{\,\epsilon,(\infty)}}(z)dz \geq \int_{\bcal_{\epsilon}((x,0))} w^{(\infty)}_{\overrightarrow{\tau}\!_{x}^{\,\epsilon,(\infty)}}(z)dz > 0
$$
by definition of $\overrightarrow{\tau}\!_{x}^{\,\epsilon,(\infty)}$. Therefore, $\overrightarrow{\tau}\!_{x}^{\,\epsilon',(\infty)} \leq \overrightarrow{\tau}\!_{x}^{\,\epsilon,(\infty)}$. In particular, this means that $\limeps \overrightarrow{\tau}\!_{x}^{\,\epsilon,(\infty)}$ exists.

Let $t \geq 0$. By Lemma~\ref{lem:property_2_density_slfv}, with probability $1$ there exists $\tilde{\epsilon}(z,t) > 0$ such that the function
\begin{equation*}
\epsilon \mapsto V_{\epsilon}^{-1} \int_{\bcal_{\epsilon}((x,0))} w_{t}^{(\infty)}(z)dz
\end{equation*}
is constant over $(0,\tilde{\epsilon}(z,t)]$, equal to a number $v\in\{0,1\}$. Consequently, we also have
$$
\lim_{\epsilon\rightarrow 0}V_{\epsilon}^{-1} \int_{\bcal_{\epsilon}((x,0))} w_{t}^{(\infty)}(z)dz = v.
$$
If $t\geq \overrightarrow{\tau}\!_{x}^{\,(\infty)}$, by the second part of Lemma~\ref{lem:random_growing_set} we know that
$$
1= \lim_{\epsilon\rightarrow 0}V_{\epsilon}^{-1} \int_{\bcal_{\epsilon}((x,0))} w_{t}^{(\infty)}(z)dz = V_{\epsilon}^{-1} \int_{\bcal_{\epsilon}((x,0))} w_{t}^{(\infty)}(z)dz \quad \hbox{for all }\epsilon \in (0,\tilde{\epsilon}(z,t)].
$$
Hence, $t\geq \overrightarrow{\tau}\!_{x}^{\,\epsilon,(\infty)}$ for all $\epsilon \in (0,\tilde{\epsilon}(z,t)]$, and using the monotonicity in $\epsilon$ of $\overrightarrow{\tau}\!_{x}^{\,\epsilon,(\infty)}$, we obtain that
$$
t\geq \lim_{\epsilon\rightarrow 0} \overrightarrow{\tau}\!_{x}^{\,\epsilon,(\infty)}.
$$
This implies that $\overrightarrow{\tau}\!_{x}^{\,(\infty)}\geq \lim_{\epsilon\rightarrow 0} \overrightarrow{\tau}\!_{x}^{\,\epsilon,(\infty)}$. Conversely, if $t<\overrightarrow{\tau}\!_{x}^{\,(\infty)}$ then
$$
0= \lim_{\epsilon\rightarrow 0}V_{\epsilon}^{-1} \int_{\bcal_{\epsilon}((x,0))} w_{t}^{(\infty)}(z)dz = V_{\epsilon}^{-1} \int_{\bcal_{\epsilon}((x,0))} w_{t}^{(\infty)}(z)dz \quad \hbox{for all }\epsilon \in (0,\tilde{\epsilon}(z,t)].
$$
By the first part of Lemma~\ref{lem:random_growing_set}, $ V_{\epsilon}^{-1} \int_{\bcal_{\epsilon}((x,0))} w_{t}^{(\infty)}(z)dz=0$ is equivalent to $t<\overrightarrow{\tau}\!_{x}^{\,\epsilon,(\infty)}$, and so necessarily, $t<\lim_{\epsilon\rightarrow 0} \overrightarrow{\tau}\!_{x}^{\,\epsilon,(\infty)}$. This gives us that $\overrightarrow{\tau}\!_{x}^{\,(\infty)}\leq \lim_{\epsilon\rightarrow 0} \overrightarrow{\tau}\!_{x}^{\,\epsilon,(\infty)}$. Combining the two inequalities, we obtain~\eqref{eqn:cond_1_lemma_2_16}.

Let us now prove \eqref{eqn:cond_2_lemma_2_16}. Constructing all the $\infty$-parent ancestral processes with the same Poisson point process ensures that $\epsilon \mapsto \tilde{\tau}\!_{x}^{\ \epsilon,(\infty)}$ is a.s. non-increasing and hence that $\limeps \tilde{\tau}\!_{x}^{\ \epsilon,(\infty)}$ exists with probability~1. By the proof of Lemma~\ref{lem:lem1}, with probability~1 there exists $\epsilon_{0}^x>0$ such that for all $t > 0$,
\begin{equation*}
\Vol\left(
E_{t}^{\epsilon,x} \cap H
\right) = \Vol\left(
E_{t}^{x} \cap H
\right) \qquad \hbox{for all } 0 < \epsilon \leq \epsilon_{0}^x.
\end{equation*}
This implies that
\begin{equation*}
\limeps \tilde{\tau}\!_{x}^{\ \epsilon,(\infty)} = \tilde{\tau}\!_{x}^{\,(\infty)} \quad \text{a.s.}
\end{equation*}

Finally, let us prove \eqref{eqn:cond_3_lemma_2_16}. Let $\epsilon > 0$ and $t \geq 0$. Using the first part of Lemma~\ref{lem:random_growing_set}, Proposition~\ref{prop:duality_relation}, the fact that $w_0^\infty=\mathds{1}_H$ and finally the fact that $(E_{t}^{\epsilon,x})_{t\geq 0}$ is non-decreasing for the inclusion, we can write
\begin{align*}
\proba\left(\overrightarrow{\tau}\!_{x}^{\,\epsilon,(\infty)} > t \right)& = \proba\left( \int_{\bcal_{\epsilon}((x,0))} w_{t}^{(\infty)}(z)dz = 0 \right)\\
& = \mathbf{P}\left( \int_{E_{t}^{\epsilon,x}} w_{0}^{(\infty)}(z)dz = 0 \right)\\
& = \mathbf{P}\left(\Vol\left(E_{t}^{\epsilon,x} \cap H \right) = 0\right) \\
& = \mathbf{P}\left(\tilde{\tau}\!_{x}^{\ \epsilon,(\infty)} > t \right).
\end{align*}
Since this result is true for any $t\geq 0$, we obtain the desired equality in distribution.
\end{proof}

For all $x \geq 0$, let $\hp^{x}$ stand for the half-plane
\begin{equation}\label{eqn:def Hx}
\hp^{x} := \left\{(x',y) \in \rde : x' \geq x\right\}.
\end{equation}
In particular, $H=(H^0)^c$ (this apparent contradiction in our notation will lead to simpler notation in the proofs).

In practice, the set of variables $(\tilde{\tau}_{x}^{(\infty)})_{x > 0}$ is not very convenient to use. Indeed, for $x < x'$, even if $\tilde{\tau}_{x}^{(\infty)}$ and $\tilde{\tau}_{x'}^{(\infty)}$ are constructed using the same underlying Poisson point process related events, we do not necessarily have $\tilde{\tau}_{x}^{(\infty)} \leq \tilde{\tau}_{x'}^{(\infty)}$ a.s., as the underlying $\infty$-parent ancestral processes have different starting positions and do not necessarily use the same sequence of events to reach $H$. Therefore, we define another sequence $(\overleftarrow{\tau}\!_{x}^{\,(\infty)})_{x > 0}$ such that for all $x > 0$, $\overleftarrow{\tau}\!_{x}^{\,(\infty)}$ and $\tilde{\tau}_{x}^{(\infty)}$ have the same distribution, but also such that all underlying $\infty$-parent ancestral processes start from the same location $(0,0)$, thereby ensuring that for all $0<x < x'$, we have $\overleftarrow{\tau}_{x}^{\,(\infty)} \leq \overleftarrow{\tau}_{x'}^{\,(\infty)}$ a.s. Moreover, for every $x > 0$, the $\infty$-parent ancestral process associated with $\overleftarrow{\tau}\!_{x}^{\,(\infty)}$ will have the same distribution as the ancestral process associated with $\tilde{\tau}_{x}^{(\infty)}$ reflected with respect to the axis $\{(x/2,y) : y \in \mathbb{R}\}$. In other words, we shall be interested in the expansion of the new process ``from left to right'', starting from $(0,0)$ and reaching abscissa $x$, instead of studying the expansion of  $E^x$ ``from right to left'', starting from $(x,0)$ and reaching abscissa~$0$.

Let $\mu^{\leftarrow}$ be the finite measure on $S_{\mu}$ defined by the property that for all measurable subsets $I,J$ of $(0,+\infty)$ and for all $-\pi/2 < \gamma_{1} \leq \gamma_{2} < \pi/2$,
\begin{equation}\label{eqn:mu back}
\mu^{\leftarrow}(I \times J \times [\gamma_{1},\gamma_{2}]) = \mu(I \times J \times [-\gamma_{2},-\gamma_{1}]).
\end{equation}
Let $(E_{t}^{\epsilon})_{t \geq 0}$, $\epsilon > 0$, be a sequence of $\infty$-parent ancestral processes associated with $\mu^{\leftarrow}$ with initial condition $\bcal_{\epsilon}((0,0))$, constructed using the same underlying Poisson point process $\overline{\Pi}$ (with intensity measure $dt\otimes dz\otimes \mu^{\leftarrow}(da,db,d\gamma)$), and let $(E_{t})_{t \geq 0}$ be the $\infty$-parent ancestral process with initial condition $\{(0,0)\}$ associated with $\mu^{\leftarrow}$, also constructed using the Poisson point process $\overline{\Pi}$. For definiteness, we suppose that these new objects are also constructed on the probability space $(\mathbf{\Omega},\mathbf{\fcal},\bP)$. By the reasoning that led to~\eqref{eqn:lem_2_13}, there exists almost surely $\epsilon_{0} > 0$ such that if $t_{0}$ is the first time at which $(0,0)$ is affected by a reproduction event in $\overline{\Pi}$,
\begin{equation}\label{eqn:new_condition_2_3_prime}
\forall\, \epsilon \in (0, \epsilon_{0}],\qquad E_{t_{0}}^{\epsilon} = E_{t_{0}}\text{ and }\forall\,  t\in [0,t_{0}),\ E_{t}^{\epsilon} = E_{0}^{\epsilon}.
\end{equation}

\begin{definition}\label{defn:dual_tau_x}
For all $x>0$, let
\begin{equation*}
\overleftarrow{\tau}\!_{x}^{\,(\infty)} := \left\lbrace
\begin{array}{ll}
\min \Big\{t \geq 0 :
\lim\limits_{\epsilon \to 0} \Vol\left(
E_{t}^{\epsilon} \cap \hp^{x}
\right) > 0\Big\}  & \mbox{if $\exists\, \epsilon_{0} > 0$ satisfying~\eqref{eqn:new_condition_2_3_prime},}\\
0 & \mbox{otherwise}.
\end{array}
\right.
\end{equation*}
\end{definition}

As for $(\tilde{\tau}\!_{x}^{\,(\infty)})_{x > 0}$, for all $x > 0$, $\overleftarrow{\tau}\!_{x}^{\,(\infty)}$ is well-defined and almost surely equal to
\begin{equation*}
\min\left\{
t \geq 0: E_{t} \cap \hp^{x} \neq \emptyset
\right\}.
\end{equation*}
Let us say that a point $z = (x,y) \in \rde$ is at \textit{horizontal separation} $d$ of the point $z' = (x',y') \in \rde$ if and only if $x - x' = d$. Then informally, $\overleftarrow{\tau}\!_{x}^{\,(\infty)}$ represents the first time the $\infty$-parent ancestral process starting from $(0,0)$ reaches points at horizontal separation of at least $x$ from the starting location. Moreover, we have the following lemma.

\begin{lem}\label{lem:la_suite_est_croissante}
The function $x \to \overleftarrow{\tau}\!_{x}^{\,(\infty)}$ is nondecreasing.
\end{lem}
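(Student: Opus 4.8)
The plan is to show that the event-defining sets in the almost-sure characterisation of $\overleftarrow{\tau}\!_{x}$ are nested, so that increasing $x$ can only increase the hitting time. Recall from the discussion just above the statement that, almost surely,
\begin{equation*}
\overleftarrow{\tau}\!_{x} = \min\left\{ t \geq 0 : E_{t} \cap HP^{x} \neq \emptyset \right\},
\end{equation*}
where $(E_{t})_{t \geq 0}$ is the single $\infty$-parent ancestral process started from $\{(0,0)\}$ associated to $\tilde{\mu}^{\leftarrow}$, constructed from the fixed Poisson point process $\overline{\Pi}$. The key structural fact I would exploit is that all the $\overleftarrow{\tau}\!_{x}$ are now read off from \emph{one and the same} realisation of the process $(E_{t})_{t \geq 0}$ --- this is precisely why the left-to-right reformulation was introduced --- so there is no issue of differing starting positions, unlike for $(\tilde{\tau}_{x})_{x > 0}$.

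First I would fix $0 < x \leq x'$ and observe the elementary set inclusion $HP^{x'} \subseteq HP^{x}$, which is immediate from the definition $HP^{x} = \{(x'',y) : x'' \geq x\}$. Consequently, for every $t \geq 0$,
\begin{equation*}
E_{t} \cap HP^{x'} \subseteq E_{t} \cap HP^{x},
\end{equation*}
so that $E_{t} \cap HP^{x'} \neq \emptyset$ implies $E_{t} \cap HP^{x} \neq \emptyset$. This shows the containment of events $\{ t : E_{t} \cap HP^{x'} \neq \emptyset \} \subseteq \{ t : E_{t} \cap HP^{x} \neq \emptyset \}$ of times at which each half-plane is reached. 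Taking the minimum over each set (a smaller set has a larger, or equal, infimum) then gives
\begin{equation*}
\min\left\{ t \geq 0 : E_{t} \cap HP^{x} \neq \emptyset \right\} \leq \min\left\{ t \geq 0 : E_{t} \cap HP^{x'} \neq \emptyset \right\},
\end{equation*}
which is exactly $\overleftarrow{\tau}\!_{x} \leq \overleftarrow{\tau}\!_{x'}$ almost surely.

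To turn this almost-sure monotonicity into the deterministic statement that the function $x \to \overleftarrow{\tau}\!_{x}$ is nondecreasing, I would be slightly careful about the order of quantifiers, since the almost-sure representation holds for each fixed $x$ on a full-measure event that a priori depends on $x$. The clean way to handle this is to note that it suffices to verify the inequality on a countable dense set of pairs: one restricts to rational $0 < x \leq x'$, intersects the corresponding countably many full-measure events to obtain a single event of probability one on which the representation and hence the inequality hold simultaneously for all rational pairs, and then upgrades to all real pairs using monotonicity along rationals together with the right-continuity or lower-semicontinuity of $t \to \Vol(E_{t} \cap HP^{x})$ in $x$. Alternatively, and more simply, one can argue directly from the set-valued dynamics: for the fixed realisation $(E_{t})_{t \geq 0}$, the map $x \mapsto \min\{t : E_t \cap HP^x \neq \emptyset\}$ is pathwise nondecreasing by the inclusion above, with no appeal to the probabilistic representation, so the monotonicity of $x \to \overleftarrow{\tau}\!_{x}$ follows on the almost-sure event where $\overleftarrow{\tau}\!_{x}$ coincides with this pathwise minimum for all $x$ at once.

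I do not expect any serious obstacle here: the result is essentially a monotonicity tautology, and the only subtlety worth a sentence is the quantifier-ordering issue above, namely ensuring the almost-sure identification of $\overleftarrow{\tau}\!_{x}$ with $\min\{t : E_t \cap HP^x \neq \emptyset\}$ holds on a \emph{common} full-measure event valid for all $x$ simultaneously rather than separately for each $x$. This is precisely the payoff of having rebuilt all the hitting times from the single process $(E_{t})_{t \geq 0}$ driven by one Poisson point process $\overline{\Pi}$, as emphasised in the paragraph preceding the lemma.
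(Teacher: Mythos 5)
Your core idea---the nestedness $HP^{x'} \subseteq HP^{x}$---is exactly the mechanism the paper's proof uses, and you are right to flag the quantifier issue; the problem is that neither of your proposed resolutions closes it, and resolution (b) rests on a false claim. There is no common full-measure event on which $\overleftarrow{\tau}\!_{x} = \min\{t \geq 0 : E_{t} \cap HP^{x} \neq \emptyset\}$ holds simultaneously for all $x > 0$: the identification (the analogue of Lemma~\ref{lem:tilde_tau_x}) is valid for each \emph{fixed} $x$, and its exceptional null set genuinely depends on $x$. Indeed, each ellipse in the path of $(E_{t})_{t \geq 0}$ has a unique right-most point (this is precisely the observation behind the express chain), so if $x^{*}$ denotes the abscissa of the right-most point of, say, the first ellipse covering $(0,0)$, then at the jump time $t_{0}$ one has $E_{t_{0}} \cap HP^{x^{*}} \neq \emptyset$ while $\Vol\left(E_{t_{0}} \cap HP^{x^{*}}\right) = 0$; since $\overleftarrow{\tau}\!_{x^{*}}$ is defined through volumes, it waits for the next overlapping event and is strictly larger than the set-hitting time at $x^{*}$. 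Hence almost surely there exist (random) values of $x$ at which your identification fails, and the ``event valid for all $x$ at once'' is null. Resolution (a) is also incomplete as stated: monotonicity along rational pairs does not transfer to real pairs without a pathwise one-sided regularity of $x \mapsto \overleftarrow{\tau}\!_{x}$, which you invoke vaguely (``right-continuity or lower-semicontinuity \ldots in $x$'') but do not establish, and which is not obviously easier than the lemma itself.

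The paper's proof avoids the hitting-time identification altogether and argues pathwise from Definition~\ref{defn:dual_tau_x} itself. On the event that some $\epsilon_{0}$ satisfying Condition~(\ref{eqn:equation_epsilon})' exists---an event that does \emph{not} depend on $x$---the stabilisation argument of Lemma~\ref{lem:lem1} shows, deterministically for the given realisation and simultaneously for all $x>0$, that $\Vol\left(E_{t}^{\epsilon} \cap HP^{x}\right) = \Vol\left(E_{t}^{\min(\epsilon_{0},x/2)} \cap HP^{x}\right)$ for every $\epsilon \leq \min(\epsilon_{0},x/2)$; then for $0 < x_{1} < x_{2}$, taking $\epsilon' = \min(\epsilon_{0},x_{1}/2,x_{2}/2)$, the inclusion $HP^{x_{2}} \subset HP^{x_{1}}$ and monotonicity of volumes give $\Vol\bigl(E_{\overleftarrow{\tau}\!_{x_{2}}}^{\epsilon'} \cap HP^{x_{1}}\bigr) \geq \Vol\bigl(E_{\overleftarrow{\tau}\!_{x_{2}}}^{\epsilon'} \cap HP^{x_{2}}\bigr) > 0$, i.e.\ $\overleftarrow{\tau}\!_{x_{1}} \leq \overleftarrow{\tau}\!_{x_{2}}$. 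When no such $\epsilon_{0}$ exists, $\overleftarrow{\tau}\!_{x} = 0$ for all $x$ by definition---a degenerate branch your proposal overlooks entirely. This yields monotonicity for \emph{every} realisation, with no exceptional null set: the fix you were looking for is to replace the set-hitting characterisation by the volume-based one, which does hold simultaneously in $x$. (Your fixed-pair argument is sound, and would in fact suffice for the way the lemma is used later, namely through monotonicity of $x \mapsto \bE[\overleftarrow{\tau}\!_{x}]$; but it does not prove the pathwise statement of the lemma as written.)
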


\begin{proof}
We distinguish two cases. If there does not exist $\epsilon_{0} > 0$ satisfying \eqref{eqn:new_condition_2_3_prime}, then for all $x > 0$, $\overleftarrow{\tau}\!_{x}^{(\infty)} = 0$, and we can conclude.

If there exists $\epsilon_{0} > 0$ satisfying \eqref{eqn:new_condition_2_3_prime}, let $0 < x_{1} < x_{2}$. By similar arguments as in Lemma~\ref{lem:lem1} and its proof, we can write that
\begin{align*}
\forall\, 0 < \epsilon \leq \min(\epsilon_{0},x_{1}/2),\quad \Vol\left(
E_{t}^{\epsilon} \cap \hp^{x_{1}}
\right) &= \Vol\left(
E_{t}^{\min(\epsilon_{0},x_{1}/2)} \cap \hp^{x_{1}}
\right) \\
\text{and } \quad \forall\, 0 < \epsilon \leq \min(\epsilon_{0},x_{2}/2),\quad \Vol\left(
E_{t}^{\epsilon} \cap \hp^{x_{2}}
\right) &= \Vol\left(
E_{t}^{\min(\epsilon_{0},x_{2}/2)}) \cap \hp^{x_{2}}
\right).
\end{align*}
Let us set $\epsilon ' = \min(\epsilon_{0},x_{1}/2,x_{2}/2)$. Then, $\hp^{x_{2}} \subset \hp^{x_{1}}$ and so
\begin{equation*}
\Vol\left(
E_{\overleftarrow{\tau}\!_{x_{2}}^{\,(\infty)}}^{\epsilon '} \cap \hp^{x_{1}}
\right) \geq \Vol\left(
E_{\overleftarrow{\tau}\!_{x_{2}}^{\,(\infty)}}^{\epsilon '} \cap \hp^{x_{2}}
\right) = \lim\limits_{\epsilon \to 0} \Vol\left(
E_{\overleftarrow{\tau}\!_{x_{2}}^{\,(\infty)}}^{\epsilon} \cap \hp^{x_{2}}
\right) > 0
\end{equation*}
by definition of $\overleftarrow{\tau}\!_{x_{2}}^{\,(\infty)}$, and so $\overleftarrow{\tau}\!_{x_{1}}^{\,(\infty)} \leq \overleftarrow{\tau}\!_{x_{2}}^{\,(\infty)}$ and we can conclude.
\end{proof}

Notice that we are now studying the expansion of the backwards-in-time process in the same direction as the occupied area in the forwards-in-time process. On the other hand, $(\tilde{\tau}\!_{x}^{\,(\infty)})_{x > 0}$ corresponds to the expansion of the $\infty$-parent ancestral process in the \textit{opposite direction}. Moreover, we recall that $(\overleftarrow{\tau}\!_{x}^{\,(\infty)})_{x > 0}$ can be seen as being constructed using an underlying $\infty$-parent ancestral process which is the reflection of the one used to construct $(\tilde{\tau}\!_{x}^{\,(\infty)})_{x > 0}$ with respect to the axis $\{(x/2,y) : y \in \mathbb{R}\}$.
This observation yields the following lemma.

\begin{lem}\label{lem:tilde_dual_tau_x}
For all $x > 0$, $\overleftarrow{\tau}\!_{x}^{\,(\infty)}$ and $\tilde{\tau}_{x}^{\,(\infty)}$ have the same distribution.
\end{lem}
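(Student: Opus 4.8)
The plan is to realise $\overleftarrow{\tau}\!_{x}$ as the image of $\tilde{\tau}_{x}$ under the affine reflection $\sigma_{x} : \rde \to \rde$, $\sigma_{x}(u,v) = (x-u,v)$, which is exactly the reflection with respect to the axis $\{(x/2,y) : y \in \mathbb{R}\}$ anticipated in the discussion preceding the statement. First I would reduce both stopping times to volume-based hitting times: by Lemma~\ref{lem:tilde_tau_x} and by the remark following Definition~\ref{defn:dual_tau_x}, almost surely $\tilde{\tau}_{x} = \min\{t \geq 0 : \Vol(E_{t}^{x} \cap \overline{HP}^{0}) > 0\}$ and $\overleftarrow{\tau}\!_{x} = \min\{t \geq 0 : \Vol(E_{t} \cap HP^{x}) > 0\}$, so it suffices to compare these two functionals. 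Working with volumes rather than with nonempty intersections is convenient, since $\sigma_{x}$ preserves Lebesgue measure.

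The key geometric computation is the behaviour of ellipses under $\sigma_{x}$. Writing $R = \mathrm{diag}(-1,1)$ for reflection about the vertical axis, one checks directly that $R A_{\gamma} R = A_{-\gamma}$, and then a substitution in Definition~\ref{defn:notation_ellipse} (using that $\theta \mapsto \pi-\theta$ is a bijection of $[0,2\pi)$) gives $\sigma_{x}(\bfrak_{a,b,\gamma}(z)) = \bfrak_{a,b,-\gamma}(\sigma_{x}(z))$: reflecting an ellipse preserves its semi-axes and only flips the orientation $\gamma \mapsto -\gamma$. I would then apply the mapping theorem for Poisson point processes to the bijection $(t,z,a,b,\gamma) \mapsto (t,\sigma_{x}(z),a,b,-\gamma)$. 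Since $dt$ is unchanged, $dz$ is invariant under the measure-preserving reflection $\sigma_{x}$, and the pushforward of $\tilde{\mu}$ under $\gamma \mapsto -\gamma$ is precisely $\tilde{\mu}^{\leftarrow}$ by its defining property, the image of $\overleftarrow{\Pi}$ is a Poisson point process with intensity $dt \otimes dz \otimes \tilde{\mu}^{\leftarrow}(da,db,d\gamma)$, that is, with the same law as $\overline{\Pi}$.

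Next I would establish pathwise equivariance of the construction: applying $\sigma_{x}$ to every state of $(E_{t}^{x})_{t \geq 0}$ yields, path by path, the $\infty$-parent ancestral process started from $\sigma_{x}(\{(x,0)\}) = \{(0,0)\}$ and driven by the reflected Poisson process. This is because both the jump condition (nonzero-volume overlap with the current state, and the single-point triggering condition of Definition~\ref{defn:better_infty_dual}) and the update rule $E_{t} = E_{t-} \cup \bfrak_{a,b,\gamma}(z)$ commute with $\sigma_{x}$, using $\sigma_{x}(\bfrak_{a,b,\gamma}(z)) = \bfrak_{a,b,-\gamma}(\sigma_{x}(z))$ together with the facts that $\sigma_{x}$ preserves Lebesgue measure and commutes with unions. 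Since the map from driving Poisson process to ancestral path is a fixed measurable map and the reflected process has the law of $\overline{\Pi}$, the process $(\sigma_{x}(E_{t}^{x}))_{t \geq 0}$ has the same law as $(E_{t})_{t \geq 0}$.

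Finally I would transport the hitting time. Because $\sigma_{x}$ preserves volume and $\sigma_{x}(\overline{HP}^{0}) = \{(u,v) : u > x\}$ differs from $HP^{x}$ only by the Lebesgue-null boundary line $\{u = x\}$, one has $\Vol(E_{t}^{x} \cap \overline{HP}^{0}) = \Vol(\sigma_{x}(E_{t}^{x}) \cap HP^{x})$ for every $t$. Hence, almost surely, $\tilde{\tau}_{x} = \min\{t \geq 0 : \Vol(\sigma_{x}(E_{t}^{x}) \cap HP^{x}) > 0\}$, and this quantity is the same measurable functional of $(\sigma_{x}(E_{t}^{x}))_{t \geq 0}$ as $\overleftarrow{\tau}\!_{x}$ is of $(E_{t})_{t \geq 0}$; by the previous paragraph it therefore has the same distribution as $\overleftarrow{\tau}\!_{x}$, which gives the claim. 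The main obstacle is not any single step but making the pathwise equivariance fully rigorous, namely verifying that the reflection intertwines the two Poisson intensities through $\tilde{\mu} \mapsto \tilde{\mu}^{\leftarrow}$ and that the measure-zero boundary discrepancy between $\sigma_{x}(\overline{HP}^{0})$ and $HP^{x}$ is harmless.
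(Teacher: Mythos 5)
Your proposal is correct and follows essentially the same route as the paper: the paper's proof also reflects the ancestral processes with respect to the axis $\{(x/2,y) : y \in \mathbb{R}\}$, observes that the reflected processes are $\infty$-parent ancestral processes driven by a Poisson point process with intensity $dt \otimes dz \otimes \tilde{\mu}^{\leftarrow}(da,db,d\gamma)$, and concludes via the equivalence $\Vol\big(E_{t}^{\epsilon,x} \cap \overline{HP}^{0}\big) > 0 \Leftrightarrow \Vol\left(\mathrm{Sym}(E_{t}^{\epsilon,x}) \cap HP^{x}\right) > 0$. The only cosmetic difference is that you reflect the point-started process $(E_{t}^{x})_{t \geq 0}$ using the a.s. identification from Lemma~\ref{lem:tilde_tau_x}, whereas the paper reflects the whole family $(E_{t}^{\epsilon,x})_{t \geq 0}$, $\epsilon > 0$; your explicit verification that $\sigma_{x}(\bfrak_{a,b,\gamma}(z)) = \bfrak_{a,b,-\gamma}(\sigma_{x}(z))$ is a detail the paper leaves implicit.
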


\begin{proof}
Let $x > 0$. For all $t \geq 0$ and $\epsilon > 0$, let $\mathrm{Ref}(E_{t}^{\epsilon,x})$ be the reflection of $E_{t}^{\epsilon,x}$ with respect to the axis $\{(x/2,y) : y \in \rmath\}$. Then, $(\mathrm{Ref}(E_{t}^{\epsilon,x}))_{t \geq 0}$, $\epsilon > 0$, is a sequence of $\infty$-parent ancestral processes with initial condition $\bcal_{\epsilon}((0,0))$, all constructed using the same Poisson point process having the same distribution as $\overline{\Pi}$ (that is, whose intensity measure is $dt\otimes dz\otimes \mu^{\leftarrow}(da,db,d\gamma)$), which can be used to construct $\overleftarrow{\tau}\!_{x}^{(\infty)}$. Moreover, for all $t \geq 0$ and $\epsilon > 0$,
\begin{equation*}
\mathrm{Vol}\big(E_{t}^{\epsilon,x} \cap H \big) > 0
\quad \text{if and only if} \quad \mathrm{Vol}\left(
\mathrm{Ref}(E_{t}^{\epsilon,x}) \cap \hp^{x}
\right) > 0,
\end{equation*}
which allows us to conclude.
\end{proof}

The following result is then a direct consequence of Lemmas~\ref{lem:dualite_tau_x} and~\ref{lem:tilde_dual_tau_x}.

\begin{prop}\label{prop:taux_some_distr}
For all $x > 0$, $\overleftarrow{\tau}\!_{x}^{\,(\infty)}$ and $\overrightarrow{\tau}\!_{x}^{\,(\infty)}$ have the same distribution.
\end{prop}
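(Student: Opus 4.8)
The plan is simply to concatenate the two distributional identities established just above, since both share the common middle term $\tilde{\tau}_{x}$. Lemma~\ref{lem:dualite_tau_x} gives that $\tilde{\tau}_{x}$ and $\overrightarrow{\tau}\!_{x}$ have the same distribution, and Lemma~\ref{lem:tilde_dual_tau_x} gives that $\overleftarrow{\tau}\!_{x}$ and $\tilde{\tau}_{x}$ have the same distribution. As equality in distribution is transitive, chaining the two immediately yields
\[
\overleftarrow{\tau}\!_{x} \;\stackrel{d}{=}\; \tilde{\tau}_{x} \;\stackrel{d}{=}\; \overrightarrow{\tau}\!_{x},
\]
which is exactly the assertion of the proposition. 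Both lemmas are stated for every $x>0$, so the same $x$ can be carried through.

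For completeness I would briefly recall where each of the two underlying equalities comes from, so that the reader sees that the substance lies there rather than in the proposition. The identity $\tilde{\tau}_{x}\stackrel{d}{=}\overrightarrow{\tau}\!_{x}$ rests on the duality relation of Proposition~\ref{prop:duality_relation}: realising $\overrightarrow{\tau}\!_{x}$ and $\tilde{\tau}_{x}$ as the decreasing limits of the $\epsilon$-indexed hitting times $\overrightarrow{\tau}\!_{x}^{\epsilon}$ and $\tilde{\tau}_{x}^{\epsilon}$, one matches, for each fixed $\epsilon$, the forward event $\{\overrightarrow{\tau}\!_{x}^{\epsilon}>t\}$ with the dual event $\{\tilde{\tau}_{x}^{\epsilon}>t\}$ through the functional $\widetilde{D}$. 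The identity $\overleftarrow{\tau}\!_{x}\stackrel{d}{=}\tilde{\tau}_{x}$ comes from reflecting the dual $\infty$-parent ancestral process across the axis $\{(x/2,y) : y \in \mathbb{R}\}$, which replaces $\tilde{\mu}$ by $\tilde{\mu}^{\leftarrow}$, exchanges the roles of $\overline{HP}^{0}$ and $HP^{x}$, and preserves the law of the driving Poisson point process.

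The main point to emphasise is that there is no genuine obstacle at this stage: the proposition is a one-line transitivity statement, and all the real work has already been done in Lemmas~\ref{lem:dualite_tau_x} and~\ref{lem:tilde_dual_tau_x} (and, further upstream, in the duality relation and the reflection symmetry of the Poisson intensity). The only thing worth checking is purely bookkeeping, namely that the two equalities indeed share the intermediate variable $\tilde{\tau}_{x}$ for the \emph{same} value of $x$, which is immediate from their statements.
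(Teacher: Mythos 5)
Your proof is correct and matches the paper exactly: the proposition is stated there as a direct consequence of Lemmas~\ref{lem:dualite_tau_x} and~\ref{lem:tilde_dual_tau_x}, chained through the common intermediate variable $\tilde{\tau}_{x}$ precisely as you do. Your supplementary remarks about where the two lemmas come from (the duality relation and the reflection of the Poisson intensity) are also accurate.
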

In other words, we can show Theorem~\ref{thm:speed_growth_infty_slfv} by proving a similar result for $\overleftarrow{\tau}\!_{x}^{\,(\infty)}$.

\subsection{A \textit{backwards in time} version of Theorem \ref{thm:speed_growth_k_slfv}}\label{subsec:back k}

We now turn to the case $k < +\infty$, and rephrase the probability of occupancy appearing in Theorem~\ref{thm:speed_growth_k_slfv} in terms of a reflected version of the $k$-parent ancestral process. Recall the definition of $\mu^{\leftarrow}$ given in~\eqref{eqn:mu back}, of the quenched $k$-parent ancestral process given in Definition~\ref{defn:quenched k-parent}, of the half-plane $H^x$ given in \eqref{eqn:def Hx}, and recall the notation $A(\Xi)$ for the set of atoms of the counting measure $\Xi$.
\begin{prop}\label{prop:backwards_k_slfv} Let $k \geq 2$.
Let $\overleftarrow{\Pi}^{+}$ be a Poisson point process on $\mathbb{R}_+\times \rde\times S_\mu\times (\rde)^\infty$ with intensity measure $dt \otimes dz \otimes \mu^{\leftarrow}(da,db,d\gamma)\tilde{u}_{(a,b,\gamma)}(d\mathbf{p})$ defined on the probability space $(\mathbf{\Omega}, \mathbf{\fcal}, \mathbf{P})$. Let $(\overleftarrow{\Xi}_{t}^{(k)})_{t \geq 0}$ be the quenched $k$-parent ancestral process with initial condition $\delta_{(0,0)}$ constructed using $\overleftarrow{\Pi}^{+}$. Then, for all $x > 0$ and $t > 0$,
\begin{equation*}
\proba\left(
\limeps V_{\epsilon}^{-1} \int_{\bcal_{\epsilon}((x,0))} w_{t}^{(k)}(z)dz = 0
\right) = \mathbf{P}\Big(
A\big(
\overleftarrow{\Xi}_{t}^{(k)}
\big) \cap \textit{\text{H}}^{x} = \emptyset
\Big).
\end{equation*}
\end{prop}

\begin{proof}
Let $x > 0$ and $t > 0$. Let $(w_{k,s}^{H})_{s \geq 0}$ be the density of $(M_{s}^{(k)})_{s \geq 0}$ obtained when following the Poisson point process construction described in Section~\ref{subsec:poisson_construction}, and using the Poisson point process $\Pi^{+}$. By Lemma~\ref{lem:property_2_density_slfv}, the value of
\begin{equation*}
\limeps V_{\epsilon}^{-1} \int_{\bcal_{\epsilon}((x,0))} w_{t}^{(k)}(z)dz
\end{equation*}
is almost surely equal to
\begin{equation*}
w_{k,t}^{\text{H}}((x,0)) = 1 - \left(
\prod_{z \in A\big(
\Xi_{k,t}^{\Pi^{+},t,\delta_{(x,0)}}
\big)} \big(1-\mathds{1}_{\text{H}}(z)\big)
\right),
\end{equation*}
where $(\Xi_{k,t'}^{\Pi^{+},t,\delta_{(x,0)}})_{t'\in[0, t]}$ is the quenched $k$-parent ancestral process associated with $\Pi^{+}$, started at time~$t$ and with initial condition $\delta_{(x,0)}$. Therefore,
\begin{align*}
\proba\left(
\limeps V_{\epsilon}^{-1} \int_{\bcal_{\epsilon}((x,0))} w_{t}^{(k)}(z)dz =0
\right) &= \proba\left(
A\Big(
\Xi_{k,t}^{\Pi^{+},t,\delta_{(x,0)}}
\Big) \cap \text{H} = \emptyset
\right) \\
&= \mathbf{P}\left(
A\big(
\overleftarrow{\Xi}_{t}^{(k)}
\big) \cap \text{H}^{x} = \emptyset
\right)
\end{align*}
by symmetry with respect to the axis $\{(x/2,y) : y \in \rmath\}$, which is the desired result.
\end{proof}

\section{Lower bound on the speed of growth of the $\infty$-parent SLFV}\label{sec:lower_bound}
In this section, we show that the growth of the occupied area in the $\infty$-parent SLFV, as well as the growth of its dual, is \textit{at least} linear in time. More precisely, we show the following result.

\begin{prop}\label{prop:lower_bound_growth}
There exists $\nu^{(\infty)} \geq 0$ such that
\begin{equation}\label{limite exp}
\lim\limits_{x \to + \infty} \frac{\bE\big[\overleftarrow{\tau}\!_{x}^{\,(\infty)}\big]}{x} = \nu^{(\infty)}
\end{equation}
and
\begin{equation}\label{limite as}
\lim\limits_{x \to + \infty} \frac{\overleftarrow{\tau}\!_{x}^{\,(\infty)}}{x} = \nu^{(\infty)} \quad \text{a.s.}
\end{equation}
\end{prop}
The proof of this result can be found at the end of Section~\ref{sec:section_2_3}. As $\overleftarrow{\tau}\!_{x}^{\,(\infty)}$ and $\overrightarrow{\tau}\!_{x}^{\,(\infty)}$ are equal in distribution by Proposition \ref{prop:taux_some_distr}, the convergence in \eqref{limite exp} also holds with $\overleftarrow{\tau}\!_{x}^{\,(\infty)}$ replaced by $\overrightarrow{\tau}\!_{x}^{\,(\infty)}$, and the convergence in \eqref{limite as} only holds in distribution (and therefore in probability, as the limit is a constant) when $\overleftarrow{\tau}\!_{x}^{\,(\infty)}$ is replaced by $\overrightarrow{\tau}\!_{x}^{\,(\infty)}$. Indeed, the a.s. convergence stated in \eqref{limite as} exploits the i.i.d. structure of the times needed for $(E_t)_{t\geq 0}$ to reach horizontal separation $n+1$ of $(0,0)$ starting from a single point at horizontal separation $n$ of $(0,0)$. Because there is no analogous subdivision of the reaching time of $(x,0)$ by the occupied area in $M^{(\infty)}$ (as a path of overlapping events reaching $(0,n+1)$ does not necessarily pass by $(0,n)$) and $\overleftarrow{\tau}\!_{x}^{\,(\infty)}$ and $\overrightarrow{\tau}\!_{x}^{\,(\infty)}$ are only equal in distribution, we cannot translate \eqref{limite as} into the a.s. convergence of $\overrightarrow{\tau}\!_{x}^{\,(\infty)}/x$ as $x\rightarrow \infty$.

The key difference with Theorem \ref{thm:speed_growth_infty_slfv} is that Proposition~\ref{prop:lower_bound_growth} does not require $\nu^{(\infty)}$ to be non-zero, and hence only means that the growth is at least linear in time (and faster if $\nu^{(\infty)} = 0$). In Section~\ref{sec:upper_bound}, we show  that indeed we have $\nu^{(\infty)} \neq 0$, or in other words, that the growth is exactly linear in time.

\begin{rem}
Since the limiting speed of growth is given by $(\nu^{(\infty)})^{-1}$, a lower bound on the speed of growth amounts to an \textit{upper bound} on $\lim\limits_{x \to + \infty} x^{-1}\bE\big[\overleftarrow{\tau}\!_{x}^{(\infty)}\big] $.
\end{rem}

In order to prove Proposition \ref{prop:lower_bound_growth}, we first establish a few auxiliary results.
In all that follows, let again $\overline{\Pi}$ be a Poisson point process on $\rmath_{+} \times \rde \times S_{\mu}$ with intensity measure $dt \otimes dz \otimes \mu^{\leftarrow}$, and let $(E_{t})_{t \geq 0}$ be the $\infty$-parent ancestral process with initial condition $\{(0,0)\}$ and associated with $\mu^{\leftarrow}$, constructed using the Poisson point process $\overline{\Pi}$. Here we use the modification of the $\infty$-parent ancestral process introduced earlier, which allows us to take single points as initial conditions.

\subsection{A sub-additivity result}\label{subsec:subadditivity}
We first introduce other $\infty$-parent ancestral processes, coupled to $(E_{t})_{t \geq 0}$ via the Poisson point process $\overline{\Pi}$. For all $z \in \rde$ and $s \geq 0$, let $(E_{t}^{z,s})_{t \geq 0}$ be the $\infty$-parent ancestral process with initial condition $\{z\}$ associated with $\mu^{\leftarrow}$, constructed using \textit{only the reproduction events in $\overline{\Pi}$ occurring strictly after time~$s$}. If $s = 0$, then $(E_{t}^{z,0})_{t \geq 0}$ is the regular $\infty$-parent ancestral process, but if $s \neq 0$, then the process is constant and equal to $\{z\}$ during the time interval $[0,s]$, and only after does it start following the dynamics of an $\infty$-parent ancestral process.

Since all the processes $(E_{t}^{z,s})_{t \geq 0}$, $z \in \rde$, $s \geq 0$ are constructed using the same underlying Poisson point process, we have the following result.

\begin{lem}\label{lem:inclusion_ancestral_process}
For all $z_{1}, z_{2} \in \rde$ and $0 < s_{1} < s_{2}$, if $z_{2} \in E_{s_{2}}^{z_{1},s_{1}}$, then a.s. for all $t \geq s_{2}$, we have
\begin{equation*}
E_{t}^{z_{2},s_{2}} \subseteq E_{t}^{z_{1},s_{1}}.
\end{equation*}
\end{lem}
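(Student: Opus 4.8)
The plan is to exploit the fact that all the processes $(E_{t}^{z,s})_{t \geq 0}$ are built from the \emph{same} Poisson point process $\overline{\Pi}$, restricted to events occurring strictly after the respective starting time. The statement is a natural monotonicity/containment property: if the second process has \emph{started} (i.e. the seed $z_{2}$ has been absorbed into the first process by time $s_{2}$), then from $s_{2}$ onwards it can never escape the first process. Intuitively this is because both processes evolve under the very same set of reproduction events available after $s_{2}$, and the first one already contains everything the second one starts with.

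\medskip

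\noindent\textbf{Key steps.} First I would fix $z_{1},z_{2}$ and $0<s_{1}<s_{2}$ and condition on the event $\{z_{2}\in E_{s_{2}}^{z_{1},s_{1}}\}$. The crucial observation is that both processes $(E_{t}^{z_{2},s_{2}})_{t\ge s_{2}}$ and $(E_{t}^{z_{1},s_{1}})_{t\ge s_{2}}$ are driven, on the time interval $[s_{2},+\infty)$, by exactly the same points of $\overline{\Pi}$, namely those with time-coordinate $>s_{2}$ (recall $(E_{t}^{z_{2},s_{2}})$ uses only events after $s_{2}$, while $(E_{t}^{z_{1},s_{1}})$ uses events after $s_{1}\supseteq$ those after $s_{2}$). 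At time $s_{2}$ we have the initial containment
\begin{equation*}
E_{s_{2}}^{z_{2},s_{2}} = \{z_{2}\} \subseteq E_{s_{2}}^{z_{1},s_{1}},
\end{equation*}
the inclusion holding precisely by the conditioning event. I would then argue by induction on the (a.s.\ finite in any compact time window, since $\tilde{\mu}^{\leftarrow}$ has bounded support) sequence of jump times $s_{2}<t_{1}<t_{2}<\cdots$ of the combined process after $s_{2}$. At each such jump $(t_{k},z,a,b,\gamma)\in\overline{\Pi}$, suppose inductively that $E_{t_{k}-}^{z_{2},s_{2}} \subseteq E_{t_{k}-}^{z_{1},s_{1}}$. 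If the smaller set gets updated, then $E_{t_{k}-}^{z_{2},s_{2}}\cap\bfrak_{a,b,\gamma}(z)$ has non-zero Lebesgue measure, hence so does $E_{t_{k}-}^{z_{1},s_{1}}\cap\bfrak_{a,b,\gamma}(z)$ by the inductive inclusion, so the larger set is updated too, giving
\begin{equation*}
E_{t_{k}}^{z_{2},s_{2}} = E_{t_{k}-}^{z_{2},s_{2}}\cup\bfrak_{a,b,\gamma}(z) \subseteq E_{t_{k}-}^{z_{1},s_{1}}\cup\bfrak_{a,b,\gamma}(z) = E_{t_{k}}^{z_{1},s_{1}}.
\end{equation*}
If instead the smaller set is not updated at $t_{k}$, then $E_{t_{k}}^{z_{2},s_{2}} = E_{t_{k}-}^{z_{2},s_{2}} \subseteq E_{t_{k}-}^{z_{1},s_{1}} \subseteq E_{t_{k}}^{z_{1},s_{1}}$, so inclusion is preserved regardless. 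Since the inclusion is preserved across every jump and is constant between jumps, it holds for all $t\ge s_{2}$.

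\medskip

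\noindent\textbf{Main obstacle.} The mechanics of the induction are routine; the point requiring a little care is the handling of the starting seed $\{z_{2}\}$ in the modified dynamics of Definition~\ref{defn:better_infty_dual}, where a singleton seed is absorbed only when $z_{2}$ lies \emph{inside} (not merely overlapping) a reproduction ellipse. I would note that at the very first post-$s_{2}$ event actually affecting $E^{z_{2},s_{2}}$, the singleton is replaced by the full ellipse $\bfrak_{a,b,\gamma}(z)$; since $z_{2}\in\bfrak_{a,b,\gamma}(z)$ and $z_{2}\in E_{t-}^{z_{1},s_{1}}$, this ellipse also intersects $E_{t-}^{z_{1},s_{1}}$ in positive measure (as $z_{2}$ is an interior point with probability one, the intersection has positive Lebesgue measure), so $E^{z_{1},s_{1}}$ is updated simultaneously and the inclusion is restored. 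After this first genuine jump, both processes are in $\ecal^{cf}$ and the clean set-valued induction above applies. One should also invoke the a.s.\ finiteness of the number of jumps in any bounded time interval (bounded support of $\tilde{\mu}^{\leftarrow}$) to make the inductive enumeration of jump times rigorous, together with the discarding of the Lebesgue-null event that $z_{2}$ lies on the boundary of a reproduction ellipse.
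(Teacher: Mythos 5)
Your proof is correct and follows exactly the argument the paper has in mind: the paper states Lemma~\ref{lem:inclusion_ancestral_process} without proof, treating it as immediate from the fact that all processes $(E_{t}^{z,s})_{t \geq 0}$ are driven by the same Poisson point process, and your jump-by-jump induction (inclusion preserved at each event because positive-measure overlap with the smaller set forces an update of the larger set) is precisely the standard formalisation of that coupling monotonicity. Your extra care with the singleton seed of Definition~\ref{defn:better_infty_dual} and with the a.s.\ finiteness of relevant jumps is sound and fills in details the paper leaves implicit.
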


We now introduce the following family of random variables. First, for all $n \in \nmath$, let
\begin{align*}
T_{0,n} :&= \min\left\{t \geq 0 : E_{t}^{(0,0),0} \cap \hp^{4n \rcal_{\mu}} \neq \emptyset\right\} = \min\left\{t \geq 0 : E_{t} \cap \hp^{4n \rcal_{\mu}} \neq \emptyset\right\} = \overleftarrow{\tau}\!_{4n \rcal_{\mu}}^{\,(\infty)} \quad \text{a.s.,}
\end{align*}
and let $P_{n}$ be sampled uniformly at random among the points in $E_{T_{0,n}}$ at horizontal separation of \textit{exactly} $4n \rcal_{\mu}$ from $(0,0)$. That is, $P_{n}$ is a uniform sample from the compact set
\begin{equation*}
E_{T_{0,n}} \cap \left\{
(x,y) \in \rde : x = 4n \rcal_{\mu}
\right\}.
\end{equation*}
Moreover, for all $0 \leq m \leq n \in \nmath$, let
\begin{equation*}
T_{m,n} := \min\left\{
t \geq 0 : E_{t+T_{0,m}}^{P_{m},T_{0,m}} \cap \hp^{4n \rcal_{\mu}} \neq \emptyset
\right\},
\end{equation*}
where $E_{t + T_{0,m}}^{P_{m},T_{0,m}}$ corresponds to the $\infty$-parent ancestral process started from $P_{m}$ at time $T_{0,m}$.

By construction, the family $(T_{m,n})_{0 \leq m \leq n}$ satisfies the following lemma.
\begin{lem}\label{lem:liggett_1} For all $0 < m < n \in \nmath$, we have
\begin{equation*}
T_{0,n} \leq T_{0,m} + T_{m,n}.
\end{equation*}
\end{lem}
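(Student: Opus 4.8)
The plan is to establish the subadditive inequality $T_{0,n} \leq T_{0,m} + T_{m,n}$ by exhibiting an explicit inclusion of ancestral processes and then comparing hitting times. The key idea is that the process started from the point $P_m$ at time $T_{0,m}$ is dominated by the global process $(E_t)_{t \geq 0} = (E_t^{(0,0),0})_{t \geq 0}$, because the former uses only a subset of the reproduction events available to the latter (those occurring strictly after $T_{0,m}$) and starts from a point $P_m$ that already belongs to $E_{T_{0,m}}$.

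First I would make the domination precise using Lemma~\ref{lem:inclusion_ancestral_process}. By definition $P_m \in E_{T_{0,m}} = E_{T_{0,m}}^{(0,0),0}$, and $P_m$ is trivially the starting point of $E^{P_m, T_{0,m}}$ at time $T_{0,m}$, so we are in the setting $z_1 = (0,0)$, $s_1 = 0$, $z_2 = P_m$, $s_2 = T_{0,m}$ with $z_2 \in E_{s_2}^{z_1,s_1}$. Applying the lemma gives, almost surely for all $t \geq T_{0,m}$,
\begin{equation*}
E_{t}^{P_m, T_{0,m}} \subseteq E_{t}^{(0,0),0} = E_t.
\end{equation*}

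Next I would translate the hitting-time definitions through this inclusion. By definition, $T_{m,n}$ is the first time $t \geq 0$ at which $E_{t + T_{0,m}}^{P_m, T_{0,m}}$ meets $HP^{4n\rcal_{\tilde\mu}}$; equivalently, writing $t' = t + T_{0,m}$, it is the value for which $E_{T_{0,m} + T_{m,n}}^{P_m, T_{0,m}} \cap HP^{4n\rcal_{\tilde\mu}} \neq \emptyset$ for the first such $t' \geq T_{0,m}$. Since the inclusion above holds at time $t' = T_{0,m} + T_{m,n} \geq T_{0,m}$, whatever point of $HP^{4n\rcal_{\tilde\mu}}$ lies in $E_{T_{0,m}+T_{m,n}}^{P_m,T_{0,m}}$ also lies in $E_{T_{0,m}+T_{m,n}}$. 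Hence $E_{T_{0,m}+T_{m,n}} \cap HP^{4n\rcal_{\tilde\mu}} \neq \emptyset$, and by minimality in the definition of $T_{0,n} = \min\{t \geq 0 : E_t \cap HP^{4n\rcal_{\tilde\mu}} \neq \emptyset\}$ we conclude $T_{0,n} \leq T_{0,m} + T_{m,n}$.

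The only genuine subtlety — and the step I would be most careful with — is the time-shift bookkeeping, namely that the process $E^{P_m, T_{0,m}}$ is frozen at $\{P_m\}$ on $[0, T_{0,m}]$ and only starts moving afterward, so one must verify that the inclusion of Lemma~\ref{lem:inclusion_ancestral_process} is being invoked at genuine times $t \geq T_{0,m}$ and that the additive shift $T_{0,m} + T_{m,n}$ in the index lands exactly where $T_{m,n}$ records the first entry into the half-plane. There is no hard analysis here; the result is essentially a deterministic consequence of the coupling through the common Poisson point process $\overline{\Pi}$, and the main work is simply aligning the three processes' starting points and starting times correctly.
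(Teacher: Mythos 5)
Your proposal is correct and follows essentially the same route as the paper's proof: both apply Lemma~\ref{lem:inclusion_ancestral_process} with $z_{1} = (0,0)$, $s_{1} = 0$, $z_{2} = P_{m}$, $s_{2} = T_{0,m}$ to get $E_{t+T_{0,m}}^{P_{m},T_{0,m}} \subseteq E_{t+T_{0,m}}$, evaluate at $t = T_{m,n}$, and conclude by minimality in the definition of $T_{0,n}$. Your extra attention to the time-shift bookkeeping is sound but adds nothing beyond what the paper's argument already contains.
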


\begin{proof}
Let $0 < m < n \in \nmath$. By definition,
\begin{equation*}
P_{m} \in E_{T_{0,m}} = E_{T_{0,m}}^{(0,0),0}.
\end{equation*}
Therefore, using Lemma \ref{lem:inclusion_ancestral_process}, we have a.s. for all $t \geq 0$
\begin{equation*}
E_{t + T_{0,m}}^{P_{m},T_{0,m}} \subseteq E_{t+  T_{0,m}}.
\end{equation*}
In particular, this is true for $t = T_{m,n}$, hence
\begin{equation*}
E_{T_{m,n}+T_{0,m}}^{P_{m},T_{0,m}} \cap \hp^{4n \rcal_{\mu}} \subseteq
E_{T_{m,n}+T_{0,m}} \cap \hp^{4n \rcal_{\mu}},
\end{equation*}
from which we deduce (by definition of $T_{0,m}$, $P_{m}$ and $T_{m,n}$) that
\begin{equation*}
E_{T_{m,n}+T_{0,m}} \cap \hp^{4n \rcal_{\mu}} \neq \emptyset.
\end{equation*}
Therefore,
\begin{align*}
T_{0,n} &= \min \left\{t > 0 : E_{t} \cap \hp^{4n \rcal_{\mu}} \neq \emptyset \right\} \leq T_{m,n} + T_{0,m}.
\end{align*}
\end{proof}

By invariance by translation of the underlying Poisson point processes, the following lemmas also hold true.
\begin{lem}\label{lem:liggett_2}
For all $n \in \nmath$, the joint distribution of $(T_{n+1,n+l+1})_{l \geq 1}$ is the same as that of $(T_{n,n+l})_{l \geq 1}$.
\end{lem}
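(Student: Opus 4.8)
The plan is to prove the slightly stronger statement that for \emph{every} $m \in \nmath$ the joint law of $(T_{m,m+k})_{k \geq 1}$ coincides with that of $(T_{0,k})_{k \geq 1} = (\overleftarrow{\tau}\!_{4k\rcal_{\tilde{\mu}}})_{k \geq 1}$, and in particular does not depend on $m$. Granting this, the lemma is immediate: taking $m=n$ identifies the law of $(T_{n,n+k})_{k\geq1}$, taking $m=n+1$ identifies the law of $(T_{n+1,(n+1)+k})_{k\geq1} = (T_{n+1,n+k+1})_{k\geq1}$, and the two laws then agree because both equal the law of $(T_{0,k})_{k\geq1}$.

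The engine is the strong Markov property of the driving Poisson point process $\overline{\Pi}$ together with its invariance under space-time translations. First I would observe that $T_{0,m}$ is a stopping time for the natural filtration of $\overline{\Pi}$, and that $P_m$ is measurable with respect to the information available at time $T_{0,m}$ (augmented by an independent source of randomness used for the uniform draw on $E_{T_{0,m}} \cap \{x = 4m\rcal_{\tilde{\mu}}\}$). By the restriction/memorylessness property of a Poisson point process, conditionally on $(T_{0,m},P_m)$ the atoms of $\overline{\Pi}$ with time coordinate strictly larger than $T_{0,m}$ form, after the time shift $t \mapsto t - T_{0,m}$, a Poisson point process with the same intensity $dt\otimes dz\otimes \tilde{\mu}^{\leftarrow}$ and independent of the past. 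Consequently $(E^{P_m,T_{0,m}}_{t+T_{0,m}})_{t\geq0}$ is, conditionally on $(T_{0,m},P_m)$, a fresh $\infty$-parent ancestral process started from the single point $P_m$.

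Next I would remove the dependence on the actual value of $P_m$. Writing $P_m = (4m\rcal_{\tilde{\mu}}, y_m)$ since $P_m$ lies on the line $\{x = 4m\rcal_{\tilde{\mu}}\}$, translating this fresh process by $-P_m$ yields an $\infty$-parent ancestral process started from $(0,0)$ and driven by a spatial translate of $\overline{\Pi}$; because the intensity $dz\otimes\tilde{\mu}^{\leftarrow}$ is invariant under every translation of $\rde$, this translate has the same law as $\overline{\Pi}$ itself. Under this shift the target half-plane $HP^{4(m+k)\rcal_{\tilde{\mu}}}$ becomes $HP^{4(m+k)\rcal_{\tilde{\mu}} - 4m\rcal_{\tilde{\mu}}} = HP^{4k\rcal_{\tilde{\mu}}}$, the vertical component $y_m$ being irrelevant since $HP^{x}$ depends only on the first coordinate. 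Hence, jointly in $k$ and conditionally on $(T_{0,m},P_m)$, the vector $(T_{m,m+k})_{k\geq1}$ has the same law as $\big(\min\{t\geq0 : E_t\cap HP^{4k\rcal_{\tilde{\mu}}}\neq\emptyset\}\big)_{k\geq1} = (T_{0,k})_{k\geq1}$, where $(E_t)_{t\geq0}$ is the process started from $(0,0)$. As this conditional law does not depend on $(T_{0,m},P_m)$, it is also the unconditional law, which establishes the claim.

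The step requiring the most care is the strong Markov reduction: one must verify that $T_{0,m}$ is genuinely a stopping time for $\overline{\Pi}$, that the uniform draw producing $P_m$ can be realised from randomness independent of the atoms of $\overline{\Pi}$ after $T_{0,m}$, and that $T_{0,m} < +\infty$ almost surely so that the restart is well-defined (this last point follows from $\overleftarrow{\tau}\!_x < +\infty$ a.s., i.e.\ the ancestral process a.s.\ reaches arbitrarily distant half-planes). Once these measurability and finiteness issues are settled, the translation-invariance reduction is routine and requires no further estimates.
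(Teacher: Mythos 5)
Your proof is correct and takes essentially the same route as the paper, which disposes of this lemma with the single remark that it holds ``by invariance by translation of the underlying Poisson point processes'': your strong-Markov restart at $(T_{0,m},P_m)$, followed by recentring by $-P_m$ and using the invariance of the intensity $dt\otimes dz\otimes\tilde{\mu}^{\leftarrow}$ (so that $HP^{4(m+k)\rcal_{\tilde{\mu}}}$ becomes $HP^{4k\rcal_{\tilde{\mu}}}$), is exactly the fleshed-out version of that remark, and in fact yields the slightly stronger conclusion that the law of $(T_{m,m+k})_{k\geq 1}$ is that of $(T_{0,k})_{k\geq 1}$ for every $m$. The caveats you flag are the right ones and pose no obstruction: $T_{0,m}$ is a stopping time because $(E_{t})_{t\geq 0}$ is an increasing right-continuous jump process, the uniform draw defining $P_m$ can be realised from randomness independent of the post-$T_{0,m}$ atoms of $\overline{\Pi}$, and $T_{0,m}<+\infty$ a.s.\ follows from the express-chain bound $\bE[T_{0,m}]\leq \bE[T_{0,m}^{express}]<+\infty$.
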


\begin{lem}\label{lem:liggett_3}
For each $l \in \nmath \setminus \{0\}$, the random variables $(T_{nl,(n+1)l})_{n \geq 1}$ are i.i.d. In particular, the sequence $(T_{nl,(n+1)l})_{n \geq 1}$ is ergodic.
\end{lem}

In order to use Theorem 1.10 from \cite{liggett1985improved} and conclude, all we need is to show that the following lemma is true.

\begin{lem}\label{lem:lem_liggett_4}
For all $n \in \nmath$, we have $\bE[T_{0,n}] < + \infty$.
\end{lem}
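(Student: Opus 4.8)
We must show that $\bE[T_{0,n}] < +\infty$ for every fixed $n \in \nmath$. Recall that $T_{0,n} = \overleftarrow{\tau}\!_{4n\rcal_{\tilde{\mu}}}$ is the first time the $\infty$-parent ancestral process $(E_t)_{t \geq 0}$, started from a single point $(0,0)$, reaches horizontal separation $4n\rcal_{\tilde{\mu}}$. The plan is to produce a crude but integrable upper bound on this hitting time by exhibiting an explicit (low-probability) mechanism by which the process advances horizontally, and then controlling the number of repetitions needed.

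**The approach.** The essential fact I would exploit is that each reproduction event in $\overline{\Pi}$ has shape parameters bounded by $\rcal_{\tilde{\mu}}$, so a single event can extend the occupied region horizontally by at most $2\rcal_{\tilde{\mu}}$; hence at least $\sim 2n$ events are needed to cross the strip, and I want each ``useful'' advance to occur within a time of finite expectation. First I would fix a deterministic sequence of target balls $B_0, B_1, \dots, B_K$ (with $K$ of order $n$) marching rightward in steps of size comparable to $\rcal_{\tilde{\mu}}$, overlapping consecutively, so that a reproduction event landing appropriately relative to $B_{i}$ and the current occupied set forces the process to reach into $B_{i+1}$. Since $(a,b)\in(0,\rcal_{\tilde{\mu}}]^2$ always, there is a uniform positive lower bound $p>0$ on the probability (per unit of the intensity $dz\,\tilde{\mu}^{\leftarrow}$) that an event occurring in a suitable bounded spatial window both overlaps the currently occupied region with positive measure and protrudes past the next target. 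I would then define a sequence of i.i.d.\ (or stochastically dominated) waiting times $\sigma_1, \sigma_2, \dots$, where $\sigma_{i}$ is the time, after the process has occupied a region reaching $B_{i}$, until the first event of $\overline{\Pi}$ falling into the relevant window and having the right shape to push past $B_{i+1}$; each $\sigma_i$ is dominated by an exponential random variable with a fixed positive rate, because the intensity of qualifying events in a bounded window is a fixed positive constant independent of $i$.

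**Assembling the bound.** Using Lemma~\ref{lem:inclusion_ancestral_process} (monotonicity of the coupled ancestral processes) I would argue that once the occupied set reaches $B_i$, restarting a single-point process from a suitable point in $B_i \cap E_t$ and following it is dominated by the true process $(E_t)_{t\geq 0}$; this lets me concatenate the advances and conclude
\begin{equation*}
T_{0,n} \leq \sum_{i=0}^{K-1} \sigma_i \quad \text{a.s.},
\end{equation*}
where $K = K(n)$ is a fixed finite number (of order $n$) and each $\sigma_i$ is stochastically dominated by an exponential variable of rate bounded below by a constant $\lambda > 0$ depending only on $\tilde{\mu}$ and $\rcal_{\tilde{\mu}}$. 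Taking expectations gives $\bE[T_{0,n}] \leq K/\lambda < +\infty$, which is exactly the claim. The finiteness of the support of $\tilde{\mu}$ (hence $\rcal_{\tilde{\mu}} < \infty$) is what guarantees both that finitely many steps suffice and that each step has a finite-rate triggering mechanism.

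**Main obstacle.** The delicate point is not the exponential domination of each waiting time but the geometric claim that a single well-placed event genuinely advances the \emph{connected} occupied region far enough to set up the next step, uniformly in the (random) current shape of $E_t$. Because the process only jumps when an event overlaps the current set in \emph{positive measure}, I must choose the target balls and the qualifying spatial windows carefully so that a qualifying event always has positive-measure overlap with whatever configuration has been reached, while still protruding past the next target by a definite amount; this requires the overlap between consecutive windows to be large enough that any ellipse of parameters in $S_{\tilde\mu}$ covering part of $B_i$ and centred in the window necessarily meets the already-occupied set. Making this uniform lower bound $p>0$ (and hence the rate $\lambda$) explicit and independent of the history is where the real work lies; everything else is a standard first-moment argument combined with the coupling of Lemma~\ref{lem:inclusion_ancestral_process}.
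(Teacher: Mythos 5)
Your overall mechanism --- a linear number of frontier-advancing stages, each completed within a waiting time stochastically dominated by a fixed exponential via the independence of the future increments of $\overline{\Pi}$ from the past --- is viable, and it is genuinely different from the paper's route: the paper introduces the \emph{express chain}, a single tracked point contained in $E_{t}$ which jumps at the constant rate $\Jm = \int_{S_{\tilde{\mu}}} V_{a,b,\gamma}\,\tilde{\mu}(da,db,d\gamma)$ to the rightmost point of each covering ellipse, and then combines Hoeffding's inequality for the i.i.d.\ bounded horizontal increments (of positive mean $\tabg$) with a Poisson Chernoff bound to get $\bE[T_{0,n}^{express}] \leq C_{2}n$, whence $\bE[T_{0,n}] < +\infty$ by the inclusion $C_{t}^{express} \in E_{t}$. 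However, your write-up contains a genuine error precisely at the point you flag as the crux. You derive the uniform rate bound $p>0$ from the fact that ``$(a,b)\in(0,\rcal_{\tilde{\mu}}]^{2}$ always''. That hypothesis is an \emph{upper} bound on the axes; $\tilde{\mu}$ carries no lower bound whatsoever on ellipse sizes, so it gives nothing about the probability that a single event protrudes past a target at distance comparable to $\rcal_{\tilde{\mu}}$. If $\tilde{\mu}$ concentrates near $a=b=0$, or on long thin vertical ellipses ($a$ tiny, $b$ near $\rcal_{\tilde{\mu}}$, $\gamma=0$ --- note that the minimality of $\rcal_{\tilde{\mu}}$ only forces mass with $\max(a,b)$ close to $\rcal_{\tilde{\mu}}$, not $\min(a,b)$), then no single event advances the frontier horizontally by order $\rcal_{\tilde{\mu}}$, your target balls spaced at that scale can never be bridged by one qualifying event, and your $p$ is zero. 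So both the claimed step size and the claimed origin of the uniform bound are wrong as stated.

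The gap is repairable, but from a different source than the one you invoke: since $\tilde{\mu}$ is a nonzero finite measure, continuity of measure from below yields some $\delta>0$ with $m_{\delta} := \tilde{\mu}\left([\delta,\rcal_{\tilde{\mu}}]^{2}\times(-\pi/2,\pi/2)\right) > 0$, and every ellipse with $a,b\geq\delta$ contains the ball of radius $\delta$ about its centre regardless of $\gamma$. Taking steps of order $\delta$ (so $K$ of order $n\rcal_{\tilde{\mu}}/\delta$, still finite and linear in $n$), if $P_{i}\in E_{t}$ is a current frontier point, any event with shape parameters in that set and centre in $\bcal_{\delta/4}(P_{i})$ has $P_{i}$ in the interior of its ellipse --- hence positive-measure overlap with $E_{t}$, since every neighbourhood of a point of a closed ellipse already contained in $E_{t}$ meets $E_{t}$ in positive measure --- and extends the frontier rightward by at least $\delta/2$. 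Such events arrive at rate at least $m_{\delta}\pi\delta^{2}/16 =: \lambda$, independently of the past, so your domination $T_{0,n}\leq\sum_{i}\sigma_{i}$ and the bound $\bE[T_{0,n}]\leq K/\lambda<+\infty$ then go through. It is worth noting what each approach buys: your repaired argument is more elementary and avoids concentration inequalities, but its constant degrades with $\delta$; the paper's express chain needs no lower bound on ellipse sizes at all (it exploits that $\tabg>0$ automatically) and produces the sharper linear estimate underlying the explicit lower bound $\Jm\tabg$ on the speed of growth used later in the paper.
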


Lemma \ref{lem:lem_liggett_4} is proved at the end of Section~\ref{sec:section_2_3}, using the so-called \textit{express chain} that we now define. We shall then use it to complete the proof of Proposition~\ref{prop:lower_bound_growth} (again at the end of Section~\ref{sec:section_2_3}).

\subsection{Definition of the express chain}\label{subsec:express chain}
The use of what we shall call the \textit{express chain} can be motivated by the following observation. In each ellipse with centre $z = (x,y) \in \rde$ and parameters $(a,b,\gamma)$, there exists exactly one point for which the horizontal separation from the centre is maximal: the point with coordinates
\begin{equation*}
(x + a\cos(\theta_{max})\cos(\gamma)-b\sin(\theta_{\max})\sin(\gamma),
y + a\cos(\theta_{max})\sin(\gamma)+b\sin(\theta_{max})\cos(\gamma)),
\end{equation*}
where $\theta_{max} = \arctan(-b a^{-1}\tan(\gamma))$. This point is at horizontal separation
\[\dhor\]
from $z$ (see Figure~\ref{fig:dessin_ellipse}).
\begin{figure}[t]
\centering
\includegraphics[width = 0.5\linewidth]{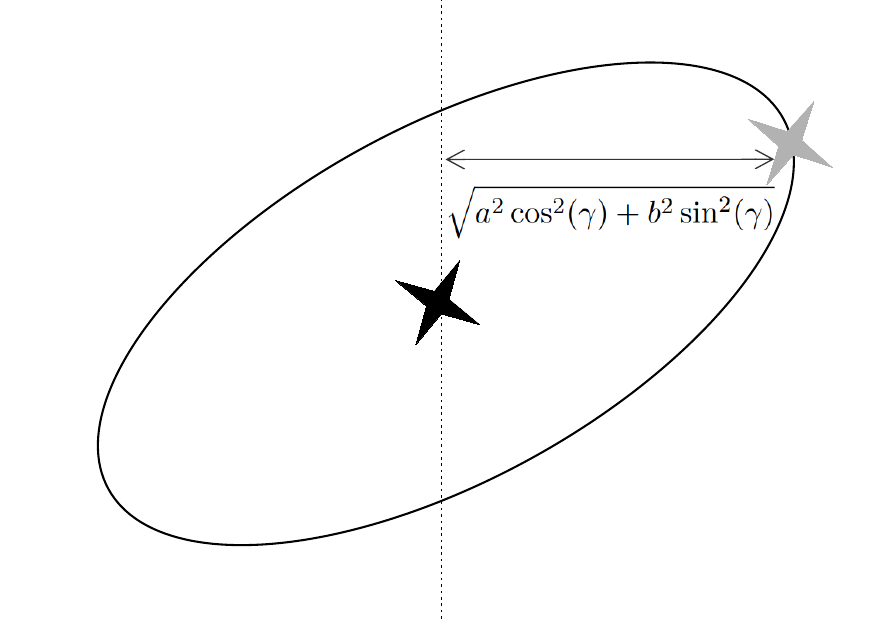}
\caption{Ellipse with parameters $(a,b,\gamma)$.}\label{fig:dessin_ellipse}
\end{figure}

If we take this potential parent, wait until it is affected by a new reproduction event, and repeat, we obtain a Markov jump process, jumping at rate
\begin{equation}\label{jump rate}
\Jm:= \int_{S_{\mu}} V_{a,b}\, \mu(da,db,d\gamma),
\end{equation}
and going away from $0$ at an average \textit{horizontal speed} of $\Jm\bE\left[\dhor\right]$ (modulo some stochasticity due to the location of the centre of the reproduction event). See Appendix \ref{appendix:geom_ellipse} for the proof of the geometrical properties of ellipses used throughout this section.

Formally, the express chain is defined as follows.

\begin{definition}
The express chain associated with $(E_{t})_{t \geq 0}$ (constructed using $\overline{\Pi}$), denoted $(C_{t}^{express})_{t \geq 0}$, is the $\rde$-valued Markov process defined as follows.

First, we set $C_{0}^{express} = (0,0)$. Then, for all $(t,z_{c},a,b,\gamma) \in \overline{\Pi}$, if $C_{t-}^{express} \in \bfrak_{a,b,\gamma}(z_{c})$ and $z_{c} = (x_{c},y_{c})$, we set:
\begin{equation*}
C_{t}^{express} = \left(X_{t}^{express}, Y_{t}^{express} \right)
\end{equation*}
where
\begin{align*}
X_{t}^{express} &= x_{c} + a\cos(\theta_{max})\cos(\gamma)-b\sin(\theta_{\max})\sin(\gamma), \\
Y_{t}^{express} &= y_{c} + a\cos(\theta_{max})\sin(\gamma)+b\sin(\theta_{max})\cos(\gamma)
\end{align*}
and $\theta_{max} = \arctan(-b a^{-1} \tan(\gamma))$.
\end{definition}
See Figure~\ref{fig:dessin_express_chain} for an illustration of how to construct the express chain.

\begin{figure}[t]
\centering
\begin{subfigure}[b]{0.45\textwidth}
\includegraphics[width = \linewidth]{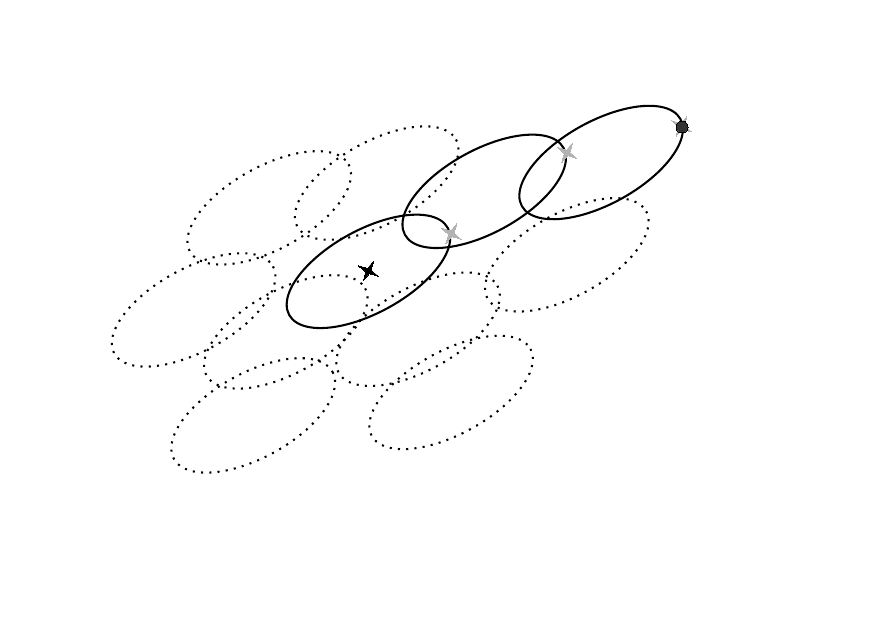}
\end{subfigure}
\begin{subfigure}[b]{0.45\textwidth}
\includegraphics[width = \linewidth]{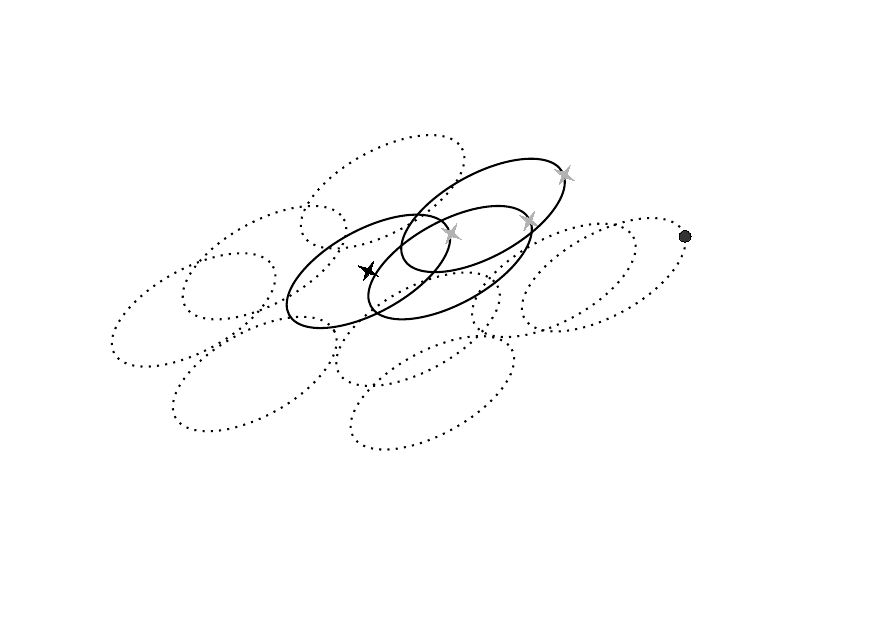}
\end{subfigure}
\caption{Comparison between the express chain and the $\infty$-parent ancestral process at time $t$, for two realisations of the $\infty$-parent ancestral process. The ellipses indicate the reproduction events which affected the $\infty$-parent ancestral process until time $t$. The crosses represent the successive positions of the express chain, and the black point indicates the location reached by the $\infty$-parent ancestral process at time $t$ which maximises the horizontal separation from the starting location. This location coincides with the one reached by the express chain in the first case, but not in the second case.
}\label{fig:dessin_express_chain}
\end{figure}

The interest of the express chain lies in the following observation, whose proof is a direct consequence of the definition of $T_{0,n}$.
\begin{lem}\label{lem:interet_express_chain}
Let $n \in \nmath\setminus\{0\}$. For all $t \geq 0$, we have $\{C_{t}^{express} \in \hp^{4n \rcal_{\mu}}\}\subset \{T_{0,n} \leq t\}$.
\end{lem}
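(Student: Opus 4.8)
The plan is to reduce the statement to the single set inclusion $C_{t}^{express} \in E_{t}$ holding for all $t \geq 0$ almost surely, where $(E_{t})_{t \geq 0}$ is the ancestral process driven by the same Poisson point process $\overline{\Pi}$. Indeed, recall that $T_{0,n} = \overleftarrow{\tau}\!_{4n\rcal_{\tilde{\mu}}} = \min\{t \geq 0 : E_{t} \cap HP^{4n\rcal_{\tilde{\mu}}} \neq \emptyset\}$. Hence on the event $\{C_{t}^{express} \in HP^{4n\rcal_{\tilde{\mu}}}\}$, once we know $C_{t}^{express} \in E_{t}$, the point $C_{t}^{express}$ witnesses $E_{t} \cap HP^{4n\rcal_{\tilde{\mu}}} \neq \emptyset$, so $T_{0,n} \leq t$ by definition of the hitting time and monotonicity of $(E_{t})_{t \geq 0}$ for the inclusion. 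This gives exactly the claimed inclusion of events.

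Next I would establish $C_{t}^{express} \in E_{t}$ by induction over the (almost surely discrete) sequence of jump times of the express chain. For the base case, both processes start at $(0,0)$, and the first event of $\overline{\Pi}$ triggering a jump is the same for both: the express chain jumps at the first $(t,z_{c},a,b,\gamma) \in \overline{\Pi}$ with $(0,0) \in \bfrak_{a,b,\gamma}(z_{c})$, which is precisely the first-jump condition for the point-initialised ancestral process of Definition~\ref{defn:better_infty_dual}. At that event $E_{t} = \bfrak_{a,b,\gamma}(z_{c})$, while $C_{t}^{express}$ is the point of maximal horizontal separation of that same ellipse, which lies on its closed boundary and hence belongs to $E_{t}$.

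For the inductive step, suppose $C_{s}^{express} \in E_{s}$ up to the time $s = t-$ of a new express jump, triggered by an event $(t,z_{c},a,b,\gamma) \in \overline{\Pi}$ with $C_{t-}^{express} \in \bfrak_{a,b,\gamma}(z_{c})$. The key point is that, almost surely, $C_{t-}^{express}$ lies in the \emph{interior} of $\bfrak_{a,b,\gamma}(z_{c})$: for fixed parameters and a centre with absolutely continuous law, the set of centres placing a given point exactly on the elliptical boundary has zero Lebesgue measure, so the degenerate tangency case is excluded. Consequently some ball around $C_{t-}^{express}$ is contained in $\bfrak_{a,b,\gamma}(z_{c})$; since moreover $C_{t-}^{express} \in E_{t-}$ and $E_{t-}$ is a finite union of (full, positive-volume) ellipses with no isolated points, that ball meets $E_{t-}$ in positive Lebesgue measure. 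Thus $\Vol(E_{t-} \cap \bfrak_{a,b,\gamma}(z_{c})) > 0$, so $(E_{t})_{t \geq 0}$ also jumps at this event and $E_{t} = E_{t-} \cup \bfrak_{a,b,\gamma}(z_{c}) \supseteq \bfrak_{a,b,\gamma}(z_{c}) \ni C_{t}^{express}$. Between consecutive express jumps $C^{express}$ is constant while $(E_{t})$ can only grow, so the inclusion is preserved, completing the induction.

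The only genuine obstacle is this last measure-theoretic point: the express chain jumps under the \emph{pointwise} condition $C_{t-}^{express} \in \bfrak_{a,b,\gamma}(z_{c})$, a null event for a single point, whereas $(E_{t})$ jumps under the \emph{positive-volume} condition. The resolution is exactly the interiority argument above, which upgrades point containment to positive-volume overlap after discarding an almost-surely negligible tangency event; everything else is routine monotonicity bookkeeping, which is why the authors can describe the proof as a direct consequence of the definition of $T_{0,n}$.
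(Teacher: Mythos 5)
Your proposal is correct and follows the same route as the paper, whose entire proof is the one-line observation that $C_{t}^{express} \in E_{t}$ ``by construction'', so that $C_{t}^{express} \in HP^{4n\rcal_{\tilde{\mu}}}$ forces $E_{t} \cap HP^{4n\rcal_{\tilde{\mu}}} \neq \emptyset$ and hence $T_{0,n} \leq t$. Your induction over jump times merely makes that ``by construction'' explicit, correctly handling the only delicate point (upgrading the pointwise jump condition of the express chain to the positive-volume jump condition of $(E_{t})_{t \geq 0}$ via the a.s.\ interiority of $C_{t-}^{express}$ in the triggering ellipse), which the paper leaves implicit.
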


\begin{proof}
Let $t \geq 0$. Since $C_{t}^{express}\in E_t$ by construction, the fact the $C_{t}^{express} \in \hp^{4n \rcal_{\mu}}$ implies that $E_t\cap \hp^{4n \rcal_{\mu}}\neq \emptyset$, and thus that $T_{0,n}\leq t$.
\end{proof}

Therefore, if for all $n \in \nmath$, we set
\begin{equation*}
T_{0,n}^{express} := \min\left\{ t \geq 0 : C_{t}^{express} \in \hp^{4n \rcal_{\mu}}\right\},
\end{equation*}
then for all $n \in \nmath$ we have
\begin{equation}\label{eqn:esp_comparaison_express_chain}
T_{0,n} \leq T_{0,n}^{express}\quad \text{a.s.}, \qquad \text{and so} \quad \bE[T_{0,n}]\leq \bE\big[T_{0,n}^{express}\big].
\end{equation}
In other words, in order to show Lemma~\ref{lem:lem_liggett_4}, it is sufficient to obtain a similar result on $\bE[T_{0,n}^{express}]$.

Before studying the properties of the express chain in the next section, we introduce the following notation. For all $t \geq 0$, let $N_{t}^{express}$ be the number of jumps of the express chain in the time interval $[0,t]$. For all $i \in \nmath \setminus \{0\}$, let $t_{i}$ be the instant of the $i$-th jump of the express chain, let $R_{i} = (R_{i}^{X},R_{i}^{Y})$ be the coordinates of the centre of the reproduction event triggering this jump, and let $(a_{i},b_{i},\gamma_{i})$ be the parameters of the ellipse affected by the reproduction event. We then set for all $i\in \nmath\setminus\{0\}$
\begin{equation*}
D_{i} = \sqrt{a_{i}^{2} \cos^{2}(\gamma_{i})+b_{i}^{2}\sin^{2}(\gamma_{i})}.
\end{equation*}
The random variable $D_i$ thus encodes the distance between the centre of the reproduction event and the right-most point in the corresponding ellipse. Note that the random variables $(D_{i})_{i\geq 1}$ are i.i.d.

\subsection{Properties of the express chain}\label{sec:section_2_3}
We now study some properties of the express chain, in order to obtain an upper bound on $\bE[T_{0,n}^{express}]$.

Let $t \geq 0$. By construction,
\begin{align}
X_{t}^{express} &= \sum_{i = 1}^{N_{t}^{express}}\left(
R_{i}^{X} - X_{t_{i}-}^{express} + D_{i}
\right) \label{eqn:express_chain} \\
&= \sum_{i = 1}^{N_{t}^{express}}\Big(R_{i}^{X} - X_{t_{i}-}^{express} + D_{i} - \bE[D_{1}]\Big) + \bE[D_{1}]N_{t}^{express}. \nonumber
\end{align}
The random variables $(R_{i}^{X}-X_{t_{i}-}^{express}+D_{i}-\bE[D_{i}])_{i \geq 1}$ are i.i.d, bounded and with mean $0$ (see Appendix~\ref{appendix:geom_ellipse}). We can then apply Hoeffding's inequality \cite{hoeffding1963probability} and obtain the following lemma.

\begin{lem}\label{lem:pos_express_chain_conditional}
There exists $C_{1} > 0$ such that for all $t \geq 0$ and $n \in \nmath \setminus \{0\}$, for all
$k > 4n \rcal_{\mu}\tabg^{-1}$, we have
\begin{align*}
&\bP\left(\xte < 4n \rcal_{\mu} \Big|\nte = k\right) \leq \exp\left(- \frac{\left( \tabg k - 4n \rcal_{\mu}\right)^{2}}{C_{1}k}\right).
\end{align*}
Consequently, for all $C' > 4n \rcal_{\mu}\tabg^{-1}$,
\begin{equation*}
\bP\left(\xte < 4n \rcal_{\mu}\Big|\nte > C'\right)\leq \exp\left(- \frac{\left(C' \tabg - 4n \rcal_{\mu}\right)^{2}}{C_{1}C'} \right).
\end{equation*}
\end{lem}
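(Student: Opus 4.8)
The plan is to exploit the compound-Poisson structure of the express chain. Since a single point of $\rde$ is covered by reproduction events at the constant rate $\Jm$ irrespective of its location, the counting process $(\nte)_{t\geq 0}$ is a Poisson process of rate $\Jm$, and by translation invariance the successive increments $Z_{i} := R_{i}^{X} - X_{t_{i}-}^{express} + D_{i}$ of $\xte$ are independent of the jump times. Consequently, conditioning on $\{\nte = k\}$ does not alter the joint law of $Z_{1},\dots,Z_{k}$: they remain i.i.d.\ copies of $R_{1}^{X} - X_{t_{1}-}^{express} + D_{1}$. Writing $\overline{Z}_{i} := Z_{i} - \bE[D_{1}]$, the variables $(\overline{Z}_{i})_{i\geq 1}$ are, as recalled just before the statement and justified in Appendix~\ref{appendix:geom_ellipse}, i.i.d., centred and bounded, the boundedness coming from $D_{i}\leq \rcal_{\tilde{\mu}}$ and $|R_{i}^{X}-X_{t_{i}-}^{express}|\leq \rcal_{\tilde{\mu}}$ (both a consequence of the bounded support of $\tilde{\mu}$). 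Let $L$ denote the length of an interval containing all the $\overline{Z}_{i}$.

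From \eqref{eqn:express_chain}, on the event $\{\nte = k\}$ we have $\xte = \sum_{i=1}^{k}\overline{Z}_{i} + \bE[D_{1}]\,k$, so that
\begin{equation*}
\bP\left(\xte < 4n\rcal_{\tilde{\mu}} \mid \nte = k\right) = \bP\left(\sum_{i=1}^{k}\overline{Z}_{i} < 4n\rcal_{\tilde{\mu}} - \bE[D_{1}]\,k\right).
\end{equation*}
For $k > 4n\rcal_{\tilde{\mu}}\bE[D_{1}]^{-1}$ the right-hand threshold is negative, so this is a lower-tail estimate for a sum of centred bounded i.i.d.\ variables, and Hoeffding's inequality gives
\begin{equation*}
\bP\left(\sum_{i=1}^{k}\overline{Z}_{i} < -\big(\bE[D_{1}]\,k - 4n\rcal_{\tilde{\mu}}\big)\right) \leq \exp\left(-\frac{2\big(\bE[D_{1}]\,k - 4n\rcal_{\tilde{\mu}}\big)^{2}}{k\,L^{2}}\right),
\end{equation*}
which is the first claimed bound with $C_{1} := L^{2}/2$.

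For the second inequality I would set $a := 4n\rcal_{\tilde{\mu}}$ and $g(k):=(\bE[D_{1}]\,k - a)^{2}/k = \bE[D_{1}]^{2}k - 2\bE[D_{1}]a + a^{2}/k$, whose derivative $g'(k)=\bE[D_{1}]^{2}-a^{2}/k^{2}$ is strictly positive exactly when $k > a\bE[D_{1}]^{-1}$; hence $k\mapsto \exp(-g(k)/C_{1})$ is decreasing on the range $k>4n\rcal_{\tilde{\mu}}\bE[D_{1}]^{-1}$. Thus every integer $k > C' > 4n\rcal_{\tilde{\mu}}\bE[D_{1}]^{-1}$ makes the first bound at most $\exp\big(-(C'\bE[D_{1}] - 4n\rcal_{\tilde{\mu}})^{2}/(C_{1}C')\big)$. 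Decomposing over the values of $\nte$,
\begin{equation*}
\bP\left(\xte < 4n\rcal_{\tilde{\mu}} \mid \nte > C'\right) = \sum_{k > C'} \bP\left(\xte < 4n\rcal_{\tilde{\mu}} \mid \nte = k\right)\,\bP\left(\nte = k \mid \nte > C'\right),
\end{equation*}
bounding each first factor by the $k$-independent quantity above and using that the weights sum to $1$ yields the second estimate.

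The main obstacle is the justification, in the first paragraph, that conditioning on $\{\nte = k\}$ preserves the i.i.d.\ centred structure of the increments; this is precisely where the constant coverage rate $\Jm$ of a single point (making $\nte$ genuinely Poisson and independent of the marks) and translation invariance are essential. Once this independence is secured, the remainder is a direct application of Hoeffding's inequality together with the elementary monotonicity of $g$.
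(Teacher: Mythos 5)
Your proof is correct and follows essentially the same route as the paper: rewrite $\xte$ via \eqref{eqn:express_chain} as a drift term plus a sum of i.i.d.\ centred bounded increments, apply Hoeffding's inequality conditionally on $\{\nte = k\}$, and deduce the second bound from the monotonicity of $k \mapsto (\tabg k - 4n\rcal_{\tilde{\mu}})^{2}/k$ on $k > 4n\rcal_{\tilde{\mu}}\tabg^{-1}$. You are in fact slightly more careful than the paper on two points it leaves implicit — the justification (via the constant coverage rate $\Jm$ and spatial homogeneity, which make the increments independent of the Poisson jump times) that conditioning on the number of jumps preserves the i.i.d.\ centred structure, and the explicit decomposition over $k > C'$ — and your tighter bounds on the increments only change the value of $C_{1}$, which is immaterial.
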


\begin{proof}
First, notice that the second part of the lemma is a direct consequence of the first part, along with the variations of the function $x \to \exp(-(xc-d)^{2} x^{-1})$, $c,d > 0$. As concerns the first part of the lemma, let $t \geq 0$, $n \in \nmath\setminus \{0\}$ and $k > 4n \rcal_{\mu}\tabg^{-1}$. Using (\ref{eqn:express_chain}), we have
\begin{align*}
&\bP\left(\xte < 4n\rcal_{\mu}\Big|\nte = k\right) \\
&\leq \bP\left(\sum_{i = 1}^{k} (R_{i}^{X}-X_{t_{i}-}^{express} + D_{i} - \bE[D_{1}]) < 4n \rcal_{\mu} - \tabg k\Big|\nte = k\right) \\
&= \bP\left(\sum_{i = 1}^{k} (-R_{i}^{X}+X_{t_{i}-}^{express} - D_{i} + \bE[D_{1}]) > \tabg k -4n \rcal_{\mu}\Big|\nte = k\right).
\end{align*}
We conclude by using Hoeffding's inequality together with the fact that for all $i \geq 1$,
\begin{align*}
\left|R_{i}^{X}-X_{t_{i}-}^{express}\right| &< 4\rcal_{\mu} \qquad \text{and} \qquad  \left|
D_{i}-\bE[D_{1}] \right| \leq 2\rcal_{\mu}.
\end{align*}
\end{proof}

In order to bound $\bE[T_{0,n}^{express}]$, we also need to control the number of jumps made by the express chain over the time interval $[0,t]$. Recall the definition of $\Jm$ given in \eqref{jump rate}.
\begin{lem}\label{lem:control_jumps}
There exists $C_{\otimes} > 0$ such that for all $t > 0$,
\begin{equation*}
\bP\left(\nte \leq 0.1 \Jm t\right) \leq \exp(-C_{\otimes}\Jm t).
\end{equation*}
\end{lem}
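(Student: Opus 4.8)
The plan is to first recognise $(\nte)_{t \geq 0}$ as a homogeneous Poisson counting process of rate $\Jm$, and then to control its lower tail with a one-line Chernoff bound.

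First I would argue that $\nte$ is the number of points in $[0,t]$ of a rate-$\Jm$ Poisson process. By construction the express chain jumps exactly when its current location is covered by a reproduction event of $\overline{\Pi}$. For any fixed $z \in \rde$, the rate at which $z$ is covered is obtained by integrating the admissible centres against the intensity,
\begin{equation*}
\int_{S_{\tilde{\mu}}} \Vol\big(\{z_{c} \in \rde : z \in \bfrak_{a,b,\gamma}(z_{c})\}\big)\,\tilde{\mu}^{\leftarrow}(da,db,d\gamma) = \int_{S_{\tilde{\mu}}} V_{a,b,\gamma}\,\tilde{\mu}^{\leftarrow}(da,db,d\gamma) = \Jm,
\end{equation*}
where I use that the set of centres $z_{c}$ for which a fixed point lies in the corresponding ellipse is a reflected translate of that ellipse, hence of volume $V_{a,b,\gamma}$, and that $V_{a,b,\gamma}$ does not depend on $\gamma$ (so the $\tilde{\mu}^{\leftarrow}$-integral coincides with the defining integral of $\Jm$). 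Crucially this rate is the same at every $z$, so by the strong Markov property of the chain together with the independence and translation invariance of $\overline{\Pi}$, the successive inter-jump times are i.i.d.\ exponential with parameter $\Jm$. Consequently $\nte$ follows the Poisson distribution with mean $\Jm t$.

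Second, I would apply the exponential Markov inequality to the lower tail. For every $s > 0$, using the Poisson moment generating function,
\begin{equation*}
\bP\big(\nte \leq 0.1\,\Jm t\big) = \bP\big(e^{-s\nte} \geq e^{-0.1 s \Jm t}\big) \leq e^{0.1 s \Jm t}\,\bE\big[e^{-s\nte}\big] = \exp\Big(\Jm t\,\big(0.1\,s + e^{-s} - 1\big)\Big).
\end{equation*}
Minimising the bracket over $s$ gives the optimal choice $s = \ln 10$, for which the bracket equals $0.1\ln 10 - 0.9 < 0$. Hence the claim holds with $C_{\otimes} = 0.9 - 0.1\ln 10 > 0$. (More generally, replacing $0.1$ by any $a \in (0,1)$ yields the analogous estimate with constant $1 - a + a\ln a > 0$.)

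The only mildly delicate point, and the step I would treat with the most care, is the first one: justifying rigorously that the inter-jump times of the express chain really are i.i.d.\ $\mathrm{Exp}(\Jm)$, i.e.\ that conditionally on its position at a jump time the chain waits an $\mathrm{Exp}(\Jm)$ time independent of the past. This is where the strong Markov property and the Poisson-point-process independence are invoked; once it is established, the tail bound is a routine Chernoff computation.
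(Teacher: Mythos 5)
Your proposal is correct and follows essentially the same route as the paper: identify $\nte$ as Poisson distributed with parameter $\Jm t$ (the paper simply asserts the jump rate is $\Jm$, whereas you spell out the volume computation and the reduction from $\tilde{\mu}^{\leftarrow}$ to $\tilde{\mu}$) and then apply a Chernoff bound to the lower tail. Your optimised exponential-Markov computation yields exactly the same exponent as the paper's pre-packaged bound $\bigl((e\Jm t)^{0.1\Jm t}e^{-\Jm t}\bigr)/(0.1\Jm t)^{0.1\Jm t}$, namely $C_{\otimes} = 0.9 - 0.1\ln 10$.
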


\begin{proof}
The proof relies on the fact that the express chain jumps at rate $\Jm$. Hence, $N_{t}^{express}$ is Poisson distributed with parameter $\Jm t$.

Let $t > 0$. Using a standard Chernoff bound, we obtain
\begin{align*}
\bP\left(
N_{t}^{express} \leq 0.1\Jm t \right) &\leq \frac{(e\Jm t)^{0.1\Jm t}e^{-\Jm t}}{(0.1\Jm t)^{0.1\Jm t}} \\
&= \frac{\exp(0.1\Jm t)\exp(-\Jm t)}{\exp(0.1\Jm t\,\ln(0.1))} \\
&= \exp(0.1\Jm t- \Jm t-0.1\Jm t \ln(0.1)),
\end{align*}
and so taking $C_{\otimes} = \Jm(0.9 - 0.1 \ln(10))$ allows us to conclude.
\end{proof}

Combining Lemmas \ref{lem:pos_express_chain_conditional} and \ref{lem:control_jumps}, we obtain an upper bound for $\bE[T_{0,n}^{express}]$.
\begin{lem}\label{lem:upper_bound_front}
There exists $C_{2}>0$ such that for all $n \in \nmath$,
\begin{equation*}
\bE\Big[T_{0,n}^{express}\Big] \leq C_{2} n.
\end{equation*}
\end{lem}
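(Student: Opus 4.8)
The goal is to bound $\bE[T_{0,n}^{express}]$, the expected time for the express chain to reach horizontal separation $4n\rcal_{\tilde{\mu}}$, by a linear function of $n$. The plan is to control $T_{0,n}^{express}$ via its tail probability, writing $\bE[T_{0,n}^{express}] = \int_{0}^{\infty} \bP(T_{0,n}^{express} > t)\,dt$, and then to split the integral at a threshold time proportional to $n$, beyond which the tail decays exponentially thanks to the two large-deviation estimates already established.

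The key observation is that $\{T_{0,n}^{express} > t\}$ is exactly the event $\{X_{t}^{express} < 4n\rcal_{\tilde{\mu}}\}$, since the express chain moves monotonically to the right in horizontal position and reaches $HP^{4n\rcal_{\tilde{\mu}}}$ precisely when its abscissa first exceeds $4n\rcal_{\tilde{\mu}}$. I would therefore decompose this event according to whether the chain has made sufficiently many jumps by time $t$. Concretely, for a suitable constant, one writes
\begin{align*}
\bP\big(X_{t}^{express} < 4n\rcal_{\tilde{\mu}}\big)
&\leq \bP\big(N_{t}^{express} \leq 0.1\,\Jm t\big) + \bP\big(X_{t}^{express} < 4n\rcal_{\tilde{\mu}} \,\big|\, N_{t}^{express} > 0.1\,\Jm t\big).
\end{align*}
The first term is bounded by $\exp(-C_{\otimes}\Jm t)$ via Lemma~\ref{lem:control_jumps}. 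For the second term, once we have chosen the integration threshold so that $0.1\,\Jm t$ exceeds $4n\rcal_{\tilde{\mu}}\tabg^{-1}$ — that is, once $t$ is larger than a constant multiple of $n$ — the second part of Lemma~\ref{lem:pos_express_chain_conditional} applies with $C' = 0.1\,\Jm t$ and gives a bound of the form $\exp\big(-(0.1\,\Jm t\,\tabg - 4n\rcal_{\tilde{\mu}})^{2}/(C_{1}\cdot 0.1\,\Jm t)\big)$, which for $t$ beyond the threshold decays exponentially in $t$ (uniformly once $0.1\,\Jm t\,\tabg \geq 2\cdot 4n\rcal_{\tilde{\mu}}$, say).

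I would then set the threshold $t^{*} = \kappa n$ for $\kappa$ a constant chosen so that both large-deviation bounds are in force (i.e. large enough that $0.1\,\Jm\,\kappa n\,\tabg \geq 8n\rcal_{\tilde{\mu}}$, which fixes $\kappa$ independently of $n$). Splitting the integral gives
\begin{equation*}
\bE\big[T_{0,n}^{express}\big] = \int_{0}^{t^{*}} \bP\big(T_{0,n}^{express} > t\big)\,dt + \int_{t^{*}}^{\infty} \bP\big(T_{0,n}^{express} > t\big)\,dt \leq t^{*} + \int_{t^{*}}^{\infty} \bP\big(X_{t}^{express} < 4n\rcal_{\tilde{\mu}}\big)\,dt,
\end{equation*}
where the first integral is bounded trivially by $t^{*} = \kappa n$. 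For the second integral, both tail contributions decay exponentially in $t$, so the integral over $[t^{*},\infty)$ is bounded by a constant (in fact by a quantity that is $O(1)$ in $n$, since the exponential rate in the percolation-type bound only improves as $n$ grows relative to the threshold). Collecting terms yields $\bE[T_{0,n}^{express}] \leq \kappa n + C$ for constants independent of $n$, and absorbing the additive constant into a slightly larger linear constant $C_{2}$ gives the claim.

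The main obstacle I anticipate is the careful bookkeeping in the second integral: one must verify that after the threshold the Hoeffding-type exponent $(0.1\,\Jm t\,\tabg - 4n\rcal_{\tilde{\mu}})^{2}/(C_{1}\cdot 0.1\,\Jm t)$ genuinely grows at least linearly in $t$ (so that the integral converges and is bounded uniformly in $n$), which requires choosing $\kappa$ so that the ``$-4n\rcal_{\tilde{\mu}}$'' correction is dominated by the leading term throughout the range $t \geq t^{*}$. A clean way to handle this is to substitute and note that for $t \geq t^{*}$ the exponent is bounded below by a fixed positive multiple of $\Jm t$, reducing the second integral to a convergent exponential integral whose value is $O(1/\Jm)$, hence $O(1)$ in $n$. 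The remaining steps are routine integration estimates.
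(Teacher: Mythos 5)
Your proposal is correct and follows essentially the same argument as the paper: both write $\bE\big[T_{0,n}^{express}\big] = \int_{0}^{\infty} \bP\big(X_{t}^{express} < 4n\rcal_{\tilde{\mu}}\big)\,dt$ using the monotonicity of $t \mapsto X_{t}^{express}$, split the integral at a threshold proportional to $n$ (the paper takes $100n\rcal_{\tilde{\mu}}/(\Jm\tabg)$, playing the role of your $\kappa n$), and bound the tail beyond it by combining Lemma~\ref{lem:control_jumps} with the conditional Hoeffding estimate of Lemma~\ref{lem:pos_express_chain_conditional} applied with $C' = \Jm t/10$. The only difference is bookkeeping: the paper expands the square in the Hoeffding exponent to show the residual integrals decay exponentially in $n$, whereas you note they are merely $O(1)$ uniformly in $n$, which is all the lemma requires.
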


\begin{proof}
Let $n \in \nmath$. Then, using Lemmas \ref{lem:pos_express_chain_conditional} and \ref{lem:control_jumps} to pass from the fourth to the fifth line below, we have
\begin{align}
\bE\big[T_{0,n}^{express}\big] = & \int_{0}^{\infty} \bP\left(T_{0,n}^{express} > t\right)dt \nonumber \\
=& \int_{0}^{\infty} \bP\left(X_{t}^{express} < 4n \rcal_{\mu}\right)dt \nonumber\\
\leq & \frac{100n \rcal_{\mu}}{\Jm\tabg} + \int_{\frac{100n  \rcal_{\mu}}{\Jm\tabg}}^{\infty} \bP\left(X_{t}^{express} < 4n\rcal_{\mu} \right)dt \nonumber\\
\leq & \frac{100n  \rcal_{\mu}}{\Jm\tabg} + \int_{\frac{100n \rcal_{\mu}}{\Jm\tabg}}^{\infty} \bP\left(N_{t}^{express} \leq \frac{\Jm t}{10} \right)dt \nonumber\\
&+ \int_{\frac{100n \rcal_{\mu}}{\Jm \tabg}}^{\infty} \bP\left(\left\{\xte < 4n   \rcal_{\mu}
\right\} \cap \left\{ \nte > \frac{\Jm t}{10}\right\}\right)dt \nonumber\\
\leq & \frac{100n \rcal_{\mu}}{\Jm \tabg} + \int_{\frac{100n \rcal_{\mu}}{\Jm\tabg}}^{\infty} \exp(-C_{\otimes} \Jm t)dt \label{eqn:dans_align}\\
&+ \int_{\frac{100n \rcal_{\mu}}{\Jm \tabg}}^{\infty}\exp\left(
-\frac{10}{C_{1}\Jm t} \left(\frac{\Jm t \tabg}{10} - 4 n \rcal_{\mu}
\right)^{2}\right)dt. \nonumber
\end{align}
Moreover, we have
\begin{align*}
 &\int_{\frac{100n \rcal_{\mu}}{\Jm\tabg}}^{\infty}\exp\bigg(-\frac{10}{C_{1}\Jm t}\left(\frac{\Jm t \tabg}{10} - 4 n   \rcal_{\mu}\right)^{2}\bigg)dt \\
&= \exp\left(\frac{2\tabg}{C_1}   4n   \rcal_{\mu}\right)\int_{\frac{100n  \rcal_{\mu}}{\Jm\tabg}}^{\infty} \exp\left(-\frac{10}{C_{1}\Jm t}  \frac{t^{2} \Jm^2\tabg^{2}}{100}\right)   \exp\left(-\frac{10}{C_{1}\Jm t}  16n^{2}\rcal_{\mu}^{2}\right)dt \\
&\leq \exp\left(\frac{8n}{C_{1}}  \rcal_{\mu} \tabg\right)  \int_{\frac{100n  \rcal_{\mu}}{\Jm\tabg}}^{\infty} \exp\left(-\frac{\Jm t}{10C_{1}}  \tabg^{2}\right)dt \\
&= \exp\left(\frac{8n}{C_{1}}  \rcal_{\mu} \tabg\right)\frac{10C_{1}}{\Jm \tabg^{2}}   \exp\left(
-\frac{\Jm \tabg^{2}}{10C_{1}}  \frac{100n  \rcal_{\mu}}{\Jm\tabg}\right) \\
&= \frac{10C_{1}}{\Jm \tabg^{2}}  \exp\left(-\frac{2n}{C_{1}}  \rcal_{\mu} \tabg\right).
\end{align*}
Since the first term on the r.h.s of (\ref{eqn:dans_align}) is proportional to $n$, and the second and third terms decrease exponentially fast in $n$, we can conclude.
\end{proof}

We can now conclude the proof of Lemma \ref{lem:lem_liggett_4}.
\begin{proof}(Lemma \ref{lem:lem_liggett_4}) Let $n \in \nmath$. By Lemma \ref{lem:upper_bound_front},
\begin{equation*}
\bE\big[T_{0,n}^{express}\big] < + \infty,
\end{equation*}
and so by (\ref{eqn:esp_comparaison_express_chain}),
\begin{equation*}
\bE\big[T_{0,n}\big] < + \infty.
\end{equation*}
\end{proof}

We conclude this section with the proof of Proposition \ref{prop:lower_bound_growth}.
\begin{proof}
(Proposition \ref{prop:lower_bound_growth})
Combining Lemmas \ref{lem:upper_bound_front}, \ref{lem:liggett_1}, \ref{lem:liggett_2} and \ref{lem:liggett_3} with the fact that $T_{0,n}\geq 0$ for all~$n$, we obtain that the family $(T_{m,n})_{0 \leq m \leq n}$ satisfies all the assumptions (1.3), (1.7), (1.8) and (1.9) of Theorem 1.10 from \cite{liggett1985improved}.
Therefore, there exists $\nu^{(\infty)} \geq 0$ such that
\begin{equation*}
\lim\limits_{n \to + \infty} \frac{\bE\big[T_{0,n}\big]}{4n\rcal_{\mu}} =
\lim\limits_{n \to + \infty} \frac{\bE\big[\overleftarrow{\tau}\!_{4n \rcal_{\mu}}^{\,(\infty)}\big]}{4n\rcal_{\mu}} = \nu^{(\infty)}.
\end{equation*}
Moreover, using the ergodicity property stated in Lemma~\ref{lem:liggett_3}, we also have
\begin{equation*}
\lim\limits_{n \to + \infty} \frac{T_{0,n}}{4n \rcal_{\mu}} =
\lim\limits_{n \to + \infty} \frac{\overleftarrow{\tau}\!_{4n \rcal_{\mu}}^{\,(\infty)}}{4n \rcal_{\mu}} = \nu^{(\infty)} \quad \text{ a.s.}
\end{equation*}
(We have multiplied $n$ by the constant $4\rcal_{\mu}$ in the denominator so that the limit $\nu^{(\infty)}$ coincides with the limit announced in Theorem~\ref{thm:speed_growth_infty_slfv}). We conclude by standard upper and lower bounding arguments, using the fact that $x \to \bE[\overleftarrow{\tau}\!_{x}^{\,(\infty)}]$ and $x \to \overleftarrow{\tau}\!_{x}^{\,(\infty)}$ are nondecreasing functions (which is a consequence of Lemma~\ref{lem:la_suite_est_croissante}).
\end{proof}

\begin{rem}\label{rem:speed in example}
If we see $(X_{t}^{express})_{t \geq 0}$ as a cumulative process (in the sense of Chapter~VI.3 in \cite{asmussen2008applied}), it is possible to use Theorem~3.1 from this chapter and show that the limiting horizontal speed of advance of the express chain is equal to $\Jm \tabg$, yielding an explicit lower bound on the speed of growth of the occupied area in the $\infty$-parent SLFV. In particular, if all reproduction ellipses have the same shape parameters $(a,b,\gamma)$ and if the total mass of $\mu$ is $V_{a,b}^{-1}$, which corresponds to the case investigated using numerical simulations in Section~\ref{sec:numerical_simulations}, then the lower bound on the speed of growth is given by $\sqrt{a^{2}\cos^{2}(\gamma)+b^{2}\sin^{2}(\gamma)}$.
\end{rem}

\section{Upper bound on the speed of growth of the $\infty$-parent SLFV}\label{sec:upper_bound}
Recall that $(E_{t})_{t \geq 0}$ is the $\infty$-parent ancestral process with initial condition $\{(0,0)\}$ associated with $\mu^{\leftarrow}$, constructed using $\overline{\Pi}$.

In this section, we complete the result proved in Section~\ref{sec:lower_bound} by showing that the growth of the $\infty$-parent SLFV and of its dual counterpart are \textit{at most} linear in time. This can be rewritten as a limiting property of $\bE[\overleftarrow{\tau}\!_{x}^{\,(\infty)}]$ as follows.

\begin{prop}\label{prop:upper_bound_growth}
\begin{equation*}
\lim\limits_{x \to + \infty}\frac{\bE\big[\overleftarrow{\tau}\!_{x}^{\,(\infty)}\big]}{x} > 0.
\end{equation*}
\end{prop}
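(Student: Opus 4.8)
The plan is to reduce the continuous growth of $(E_{t})_{t \geq 0}$ to a first-passage percolation problem on a lattice of boxes. Two structural facts make this possible: every reproduction ellipse $\bfrak_{a,b,\gamma}(z)$ is contained in the ball $\bcal_{\rcal_{\tilde{\mu}}}(z)$, so occupation can only spread \emph{locally}; and $\tilde{\mu}^{\leftarrow}$ is a \emph{finite} measure, so the rate at which occupation can enter any fixed region of $\rde$ is bounded by a constant independent of the (possibly very large) current size of $E_{t}$. The latter is precisely what rules out the acceleration that a long, tall front might naively be expected to produce.

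First I would tile $\rde$ by boxes $B_{\mathbf{i}} = \ell \mathbf{i} + [0,\ell)^{2}$, $\mathbf{i} \in \zmath^{2}$, with side length $\ell$ of order $\rcal_{\tilde{\mu}}$, and build a box-valued growth process $(\mathcal{I}_{t})_{t \geq 0}$ dominating the occupation of $(E_{t})$. I set $\mathcal{I}_{0}$ to be the box containing the origin, and for each atom $(s,z,a,b,\gamma)$ of $\overline{\Pi}$ whose ellipse meets a box already in $\mathcal{I}_{s-}$, I add to $\mathcal{I}_{s}$ every box that this ellipse intersects. Since each ellipse has diameter at most $2\rcal_{\tilde{\mu}}$, it meets only $O(1)$ boxes, all within a bounded box-distance $\rho := \lceil 2\rcal_{\tilde{\mu}}/\ell \rceil + 1$ of one another, so $\mathcal{I}$ has finite interaction range. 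A straightforward induction on the jump times of $E$ then gives $\{\mathbf{i} : \Vol(E_{s} \cap B_{\mathbf{i}}) > 0\} \subseteq \mathcal{I}_{s}$ for all $s$: a jump of $E$ adds an ellipse meeting $E_{s-}$, hence meeting some box of $\mathcal{I}_{s-}$, which forces $\mathcal{I}$ to swallow all the new boxes of $E_{s}$.

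Next I would bound the spreading rate of $\mathcal{I}$. A box $B_{\mathbf{i}}$ is added only by an event whose ellipse meets $B_{\mathbf{i}}$; such an ellipse has its centre within distance $\rcal_{\tilde{\mu}}$ of $B_{\mathbf{i}}$, i.e.\ in a fixed region of area $O(\rcal_{\tilde{\mu}}^{2})$. Hence the candidate events for infecting $B_{\mathbf{i}}$ arrive at rate at most $\lambda := C\rcal_{\tilde{\mu}}^{2}\,\tilde{\mu}^{\leftarrow}(S_{\tilde{\mu}}) < +\infty$, uniformly in the configuration and independently of the overall size of $E_{t}$. Thus $(\mathcal{I}_{t})$ is an instance of a Richardson-type growth model with bounded flip rate and finite range $\rho$. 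I would then compare it to first-passage percolation on $\zmath^{2}$: the infection times are bounded below by FPP passage times whose edge weights are built from the exponential inter-arrival times of the relevant Poisson events. These weights are almost surely strictly positive (their law has no atom at $0$), so the time constant $\mu$ of the model is strictly positive by the standard criterion (see, \emph{e.g.}, \cite{auffinger201750}). Consequently the time for $\mathcal{I}$ to reach horizontal box-index $n$ is at least $\mu n(1+o(1))$ in expectation. Since reaching $HP^{x}$ forces $\mathcal{I}$ to contain a box of horizontal index at least $x/\ell - 1$, and $E$ is dominated by $\mathcal{I}$, we get $\overleftarrow{\tau}\!_{x} \geq \mu(x/\ell - 1)(1+o(1))$, whence $\liminf_{x \to +\infty} \bE[\overleftarrow{\tau}\!_{x}]/x \geq \mu/\ell > 0$. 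As Proposition~\ref{prop:lower_bound_growth} already guarantees that $\bE[\overleftarrow{\tau}\!_{x}]/x$ converges, this liminf \emph{is} the limit $\nu$, giving $\nu > 0$, which is the claim.

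I expect the main obstacle to be the last step: transferring the finite-range, bounded-rate box process to a genuine FPP model and invoking positivity of the time constant, while controlling the dependence between neighbouring passage times that arises because a single event may infect several boxes at once. I would deal with this either by a coarse-graining/renormalisation that restores enough independence between well-separated regions, or by a direct greedy-path argument showing that every infection path of box-length $n$ has total duration stochastically bounded below by a sum of $n$ independent strictly positive delays, the minimum over the exponentially many paths still growing linearly precisely because the per-edge law carries no mass at $0$.
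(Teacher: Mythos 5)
Your strategy is in essence the paper's own: dominate the elliptical events by balls of radius $\rcal_{\tilde{\mu}}$, discretise space into a lattice of cells, dominate the ancestral process by a finite-range, bounded-rate lattice growth process, compare with a first-passage percolation problem whose time constant is positive, and upgrade the resulting liminf to a limit via Proposition~\ref{prop:lower_bound_growth} --- that last step is identical to the paper's. The one genuine difference is the geometry of the discretisation, and it sits exactly where you flag the main obstacle. Because you tile the plane completely, a single event infects several adjacent boxes at once, so the induced edge weights are dependent and have an atom at zero; your mid-proof claim that ``these weights are almost surely strictly positive'' is therefore false as stated, and the real work is deferred to the coarse-graining/greedy-path sketch of your final paragraph. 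That sketch is sound: any infection chain reaching box-distance $n$ uses at least $n/\rho$ distinct Poisson events, each delayed, conditionally on the past, by an exponential time of bounded rate (the centres capable of extending the chain occupy a region of bounded area, and $\tilde{\mu}^{\leftarrow}$ is finite), and a union bound over the exponentially many coarse paths, with Gamma tail estimates, closes the argument. The paper engineers this difficulty away instead: after reducing to balls of radius $1$ (Lemma~\ref{lem:lem2}), it uses cells of side $2$ centred on the sparse grid $\{(4i,4j) : (i,j) \in \zmath^{2}\}$, separated by gaps, so that a.s.\ each event intersects at most one cell and any path of events avoiding all cells stays confined to a single white region; activation of a cell given an active neighbour is then stochastically bounded below by an $Exp(16\pi^{-1})$ variable (Proposition~\ref{prop:comparison_discr_fpp}), and positivity of the time constant is quoted from Smythe--Wierman (Lemma~\ref{lem:cvg_esp_fpp}). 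Your full tiling buys a trivial domination lemma and dispenses with any confinement property; the paper's gaps buy the at-most-one-cell property and a clean per-edge exponential bound --- and note that even then the paper settles for a stochastic comparison rather than a coupling, for exactly the residual-correlation reason you identified.
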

The proof can be found at the end of Section \ref{sec:section_4_3}. Combining this result with Propositions~\ref{prop:taux_some_distr} and \ref{prop:lower_bound_growth} gives us the result of Theorem~\ref{thm:speed_growth_infty_slfv}.

To show the result of Proposition~\ref{prop:upper_bound_growth}, we first observe that it is sufficient to focus on the case in which all reproduction events are balls of fixed radius. Indeed, we have the following comparison result.

\begin{lem}\label{lem:lem2} Let $\overline{\Pi}^{\rcal_{\mu}}$ be a Poisson point process on $\mathbb{R}_{+} \times \mathbb{R}^{2} \times S_{\mu}$ with intensity measure
\begin{equation*}
\mu^{\leftarrow}(S_{\mu}) dt \otimes dz \otimes \delta_{\rcal_{\mu}}(da) \otimes \delta_{\rcal_{\mu}}(db) \otimes \delta_{0}(\gamma).
\end{equation*}
Let $(E_{t}^{\rcal_{\mu}})_{t \geq 0}$ be the $\infty$-parent ancestral process with initial condition $\{(0,0)\}$ constructed using $\overline{\Pi}^{\rcal_{\mu}}$, and for all $x > 0$, let $\overleftarrow{\tau}\!_x^{\,\rcal_{\mu}}$ be the first time $(E_{t}^{\rcal_{\mu}})_{t \geq 0}$ reaches $\hp^{x}$, in the sense of Definition~\ref{defn:dual_tau_x}.
Then, for all $x > 0$,
\begin{equation*}
\bE\big[\overleftarrow{\tau}\!_{x}^{\,(\infty)}\big] \geq \bE\left[
\overleftarrow{\tau}\!_{x}^{\,\rcal_{\mu}}
\right].
\end{equation*}
\end{lem}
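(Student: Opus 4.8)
The plan is to couple the two driving Poisson point processes so that they share the same event times and centres, to show pathwise that the ball-driven ancestral process dominates the elliptical one, and finally to take expectations of the resulting ordering of hitting times.

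First I would construct the coupling. Both $\overline{\Pi}$ and $\overline{\Pi}^{\rcal_{\tilde{\mu}}}$ have the same projection onto $\rmath_{+} \times \rde$, namely a Poisson point process with intensity $\tilde{\mu}^{\leftarrow}(S_{\tilde{\mu}})\, dt \otimes dz$: indeed $\tilde{\mu}^{\leftarrow}$ is finite with total mass $\tilde{\mu}^{\leftarrow}(S_{\tilde{\mu}})$, and the ball-marks $\delta_{(\rcal_{\tilde{\mu}},\rcal_{\tilde{\mu}},0)}$ carry exactly this same total mass. Hence I can realise both on a single $(t,z)$-point process: to each point $(t,z)$ I attach a mark $(a,b,\gamma)$ sampled from $\tilde{\mu}^{\leftarrow}/\tilde{\mu}^{\leftarrow}(S_{\tilde{\mu}})$ to form $\overline{\Pi}$, and the deterministic mark $(\rcal_{\tilde{\mu}},\rcal_{\tilde{\mu}},0)$ to form $\overline{\Pi}^{\rcal_{\tilde{\mu}}}$. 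Under this coupling the two marked processes have the correct laws, and every event $(t,z,a,b,\gamma)$ of $\overline{\Pi}$ is matched by an event $(t,z,\rcal_{\tilde{\mu}},\rcal_{\tilde{\mu}},0)$ of $\overline{\Pi}^{\rcal_{\tilde{\mu}}}$ sharing the same time and centre. The geometric input is that $a,b \leq \rcal_{\tilde{\mu}}$ on $S_{\tilde{\mu}}$, so $\bfrak_{a,b,\gamma}(z) \subseteq \bcal_{\rcal_{\tilde{\mu}}}(z)$ for every $(a,b,\gamma) \in S_{\tilde{\mu}}$ and every $z \in \rde$ — exactly the inclusion already used in the proof following Definition~\ref{defn:dual_process}.

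Next I would prove, by induction on the common (a.s. discrete and ordered) sequence of events, that $E_{t} \subseteq E_{t}^{\rcal_{\tilde{\mu}}}$ for all $t \geq 0$, almost surely. Both processes start from $\{(0,0)\}$, so the inclusion holds until the first event. For the inductive step, assume $E_{t-} \subseteq E_{t-}^{\rcal_{\tilde{\mu}}}$ and let an event occur at the matched centre $z$. If $E$ does not jump, then $E_{t} = E_{t-} \subseteq E_{t-}^{\rcal_{\tilde{\mu}}} \subseteq E_{t}^{\rcal_{\tilde{\mu}}}$ and we are done. If $E$ does jump, then either $E_{t-}=\{(0,0)\}$ with $(0,0) \in \bfrak_{a,b,\gamma}(z)$, or $\Vol(E_{t-} \cap \bfrak_{a,b,\gamma}(z)) > 0$; in both cases the inclusions $E_{t-} \subseteq E_{t-}^{\rcal_{\tilde{\mu}}}$ and $\bfrak_{a,b,\gamma}(z) \subseteq \bcal_{\rcal_{\tilde{\mu}}}(z)$ force the ball process to jump as well (using that $(0,0)$ a.s. lies in the interior of $E_{t-}^{\rcal_{\tilde{\mu}}}$ once the ball process has left its single-point state, so that the positive-overlap condition is met). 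Hence $E_{t}^{\rcal_{\tilde{\mu}}} = E_{t-}^{\rcal_{\tilde{\mu}}} \cup \bcal_{\rcal_{\tilde{\mu}}}(z) \supseteq E_{t-} \cup \bfrak_{a,b,\gamma}(z) = E_{t}$, which closes the induction.

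Finally, since $E_{t} \subseteq E_{t}^{\rcal_{\tilde{\mu}}}$ holds for all $t$, and since $\overleftarrow{\tau}_{x}$ and $\overleftarrow{\tau}_{x}^{\rcal_{\tilde{\mu}}}$ are a.s. equal to the first hitting times of $HP^{x}$ by $E$ and $E^{\rcal_{\tilde{\mu}}}$ respectively (as noted just after Definition~\ref{defn:dual_tau_x}), any time $t$ with $E_{t} \cap HP^{x} \neq \emptyset$ also satisfies $E_{t}^{\rcal_{\tilde{\mu}}} \cap HP^{x} \neq \emptyset$. Therefore $\overleftarrow{\tau}_{x}^{\rcal_{\tilde{\mu}}} \leq \overleftarrow{\tau}_{x}$ almost surely, and taking expectations yields $\bE[\overleftarrow{\tau}_{x}] \geq \bE[\overleftarrow{\tau}_{x}^{\rcal_{\tilde{\mu}}}]$. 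The main obstacle is the bookkeeping in the jump case of the induction, namely verifying that a jump of the elliptical process always triggers a jump of the ball process; this is where the single-point initial condition needs a little care, as one must argue that $(0,0)$ is a.s. an interior point of $E_{t-}^{\rcal_{\tilde{\mu}}}$ once that process has grown beyond a single point, so that positive-volume overlap is guaranteed. The null boundary events — where $(0,0)$ lies exactly on the boundary of some ellipse or ball — can be discarded throughout.
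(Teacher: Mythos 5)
Your proof is correct and takes essentially the same approach as the paper: the same coupling in which $\overline{\Pi}^{\rcal_{\tilde{\mu}}}$ is obtained from $\overline{\Pi}$ by replacing each mark with $(\rcal_{\tilde{\mu}},\rcal_{\tilde{\mu}},0)$, the inclusion $\bfrak_{a,b,\gamma}(z) \subseteq \bcal_{\rcal_{\tilde{\mu}}}(z)$ yielding $E_{t} \subseteq E_{t}^{\rcal_{\tilde{\mu}}}$ pathwise, and then monotonicity of the hitting times of $HP^{x}$ followed by taking expectations. The only difference is that you spell out the event-by-event induction (including the care needed for the single-point initial condition) that the paper states as an immediate consequence of the coupling.
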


\begin{proof}
The proof relies on the following coupling between $(E_{t})_{t \geq 0}$ and $(E_{t}^{\rcal_{\mu}})_{t \geq 0}$. Instead of being independent from $\overline{\Pi}$, the Poisson point process $\overline{\Pi}^{\rcal_{\mu}}$ is constructed using the points from $\overline{\Pi}$, as follows:
\[\overline{\Pi}^{\rcal_{\mu}} := \big\{(t,z,\rcal_{\mu},\rcal_{\mu},0):\ (t,z,a,b,\gamma)\in \overline{\Pi}\big\}.\]
That is, we replace the elliptical area of each reproduction event in $\overline{\Pi}$ by a ball of maximal radius $\rcal_{\mu}$. Since $\bfrak_{a,b,\gamma}(z) \subseteq \bcal_{\rcal_{\mu}}(z)$, this coupling ensures that
\begin{equation*}
\forall\, t \geq 0, E_{t} \subseteq E_{t}^{\rcal_{\mu}} \text{ a.s.}
\end{equation*}
Therefore, for all $x > 0$,
\begin{equation*}
\min\left\{
t \geq 0 : E_{t} \cap \hp^{x} \neq \emptyset
\right\} \geq \min\left\{
t \geq 0 : E_{t}^{\rcal_{\mu}} \cap \hp^{x}
\right\} \text{ a.s.},
\end{equation*}
which allows us to conclude.
\end{proof}
To alleviate the notation, we only provide the proof of Proposition~\ref{prop:upper_bound_growth} when $\mu^{\leftarrow}=\pi^{-1} \delta_1\otimes \delta_1\otimes \delta_0$, so that all reproduction events happen in balls of radius $1$ and the rate at which a given point in space is overlapped by an event is equal to $1$. The proof can be easily generalised to balls of arbitrary fixed radius and any intensity of events by a simple scaling of time and space. We can then use Lemma~\ref{lem:lem2} to obtain the corresponding result for ellipses with bounded parameters.

\subsection{A first-passage percolation problem}\label{subsec:percolation}
We consider the graph $\gcal$ on the vertex set $\zmath^{2}$, in which $(i,j)$ and $(i',j')$ are connected by an edge if and only if
\begin{align*}
(i',j') \in \big\{(i+1,j),(i-1,j),(i,j+1),(i,j-1),&(i-1,j-1),(i-1,j+1),\\
& \qquad (i+1,j-1),(i+1,j+1)\big\}.
\end{align*}

To each edge $e$ of $\gcal$, we associate an independent exponential random variable
\begin{equation*}
\ecal_{e} \sim Exp(16\times \pi^{-1}).
\end{equation*}
$\ecal_{e}$ corresponds to the time needed to pass through the corresponding edge. Following standard terminology in first-passage percolation, we call it the \textit{passage time} of the edge. The choice of the rate of the exponential distribution ensures we can later compare the growth of the $\infty$-parent ancestral process to the first-passage percolation problem we now introduce.

If $\Gamma$ is a (potentially infinite) path formed by the edges $e_{1},... , e_{n},...$, then the passage time of the path $\Gamma$ is defined as
\begin{equation*}
\ecal_{\Gamma} = \sum_{e \in \Gamma} \ecal_{e}.
\end{equation*}
If $z_{1}, z_{2} \in \zmath^{2}$, we define the first-passage time $\ecal_{z_{1},z_{2}}$ from $z_{1}$ to $z_{2}$ (or equivalently, from $z_{2}$ to $z_{1}$ since $\gcal$ is not oriented) as the minimum over the passage times of all the (finite) paths going from $z_{1}$ to $z_{2}$. We then define
\begin{equation*}
\overleftarrow{\tau}\!_{n}^{fpp} := \min\left\{
t \geq 0: \exists m \in \zmath, \ecal_{(0,0),(n,m)} \leq t
\right\}.
\end{equation*}
In other words, $\overleftarrow{\tau}\!_{n}^{fpp}$ is the time needed to reach a point at horizontal separation $n$ from the origin, starting from the origin.

The interest of this first-passage percolation problem lies in the fact that since the passage time of any given edge is a.s. strictly positive, generalising Theorem~6.7 from \cite{smythe1978first}
to our lattice yields the following result.

\begin{lem}\label{lem:cvg_esp_fpp} We have
\begin{equation*}
\lim\limits_{n \to + \infty} \frac{\bE\big[\overleftarrow{\tau}\!_{n}^{fpp}\big]}{n} > 0.
\end{equation*}
\end{lem}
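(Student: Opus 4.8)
The plan is to recognise $\overleftarrow{\tau}\!_{n}^{fpp}$ as a point-to-line first-passage time on the lattice $\gcal$ and to separate the statement into two parts: existence of the limit and its strict positivity. Existence is the standard part. The point-to-line passage times carry the usual subadditive structure of first-passage percolation (to reach the line $\{x=n\}$ one may first reach $\{x=m\}$ and then travel the remaining horizontal distance from the hitting point), so the convergence of $\bE[\overleftarrow{\tau}\!_{n}^{fpp}]/n$ to some constant $\mu \ge 0$ follows, exactly as in Theorem~6.7 of \cite{smythe1978first}, once that argument is transcribed to the eight-neighbour lattice $\gcal$ (the only change being the set of admissible steps). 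The whole content of the lemma is therefore to show $\mu > 0$, i.e. that the growth is genuinely linear rather than sublinear.

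For positivity I would argue directly by a path-counting bound. The key geometric observation is that every edge of $\gcal$ changes the first coordinate by at most $1$, so any path issued from $(0,0)$ reaching the line $\{x=n\}$ uses at least $n$ edges. Writing $\lambda = 16\pi^{-1}$ for the rate of the $\ecal_e$ and $S_k$ for a sum of $k$ i.i.d.\ $\mathrm{Exp}(\lambda)$ variables, and bounding the number of paths of length $k$ from the origin by $8^{k}$, a union bound over paths grouped by length gives, for every $t \ge 0$ and every $\theta > 0$,
\begin{equation*}
\bP\!\left(\overleftarrow{\tau}\!_{n}^{fpp} \le t\right) \le \sum_{k \ge n} 8^{k}\,\bP\!\left(S_k \le t\right) \le \sum_{k \ge n} 8^{k} e^{\theta t}\left(\frac{\lambda}{\lambda+\theta}\right)^{k},
\end{equation*}
the last step being Chernoff's inequality. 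Choosing $\theta > 7\lambda$ so that $\rho := 8\lambda/(\lambda+\theta) < 1$ and taking $t = cn$, the right-hand side is at most $(1-\rho)^{-1}\, e^{\theta c n}\rho^{n}$. Since $\rho < 1$, one can fix $c > 0$ small enough that $\theta c + \ln\rho < 0$, and then $\bP(\overleftarrow{\tau}\!_{n}^{fpp} \le cn)$ decays exponentially in $n$.

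Consequently $\bP(\overleftarrow{\tau}\!_{n}^{fpp} > cn) \to 1$, whence
\begin{equation*}
\bE\!\left[\overleftarrow{\tau}\!_{n}^{fpp}\right] \ge cn\,\bP\!\left(\overleftarrow{\tau}\!_{n}^{fpp} > cn\right) = cn\,(1 - o(1)),
\end{equation*}
so $\liminf_{n} \bE[\overleftarrow{\tau}\!_{n}^{fpp}]/n \ge c > 0$; combined with the existence of the limit this gives $\mu \ge c > 0$. The delicate point, and the main obstacle, is precisely this lower bound: one minimises over exponentially many competing paths, so a naive estimate fails. What rescues the argument is that the $\mathrm{Exp}(\lambda)$ passage times are continuous, i.e. $\bP(\ecal_e = 0) = 0$, which lets the Chernoff factor $(\lambda/(\lambda+\theta))^{k}$ beat the $8^{k}$ growth in the number of paths. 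Conceptually this is the same mechanism as Kesten's criterion $\mu > 0 \iff \bP(\ecal_e = 0) < p_c(\gcal)$, the threshold $p_c(\gcal)$ being strictly positive for the locally finite lattice $\gcal$; the direct estimate above is simply the self-contained version adapted to $\gcal$.
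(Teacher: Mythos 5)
Your proof is correct, but it takes a genuinely different --- and more self-contained --- route than the paper. The paper disposes of this lemma in a single sentence: since the passage time $\ecal_{e}$ of any edge is a.s. strictly positive, it invokes Theorem~6.7 of Smythe and Wierman and asserts that the argument generalises from the square lattice to the eight-neighbour lattice $\gcal$; no explicit estimate is given, and both the existence of the limit and its positivity are outsourced to that reference. You instead defer only the existence of the limit to the standard subadditive machinery (matching the paper's citation) and prove the real content, strict positivity, from scratch by the classical path-counting/Chernoff argument: every edge of $\gcal$ changes the abscissa by at most $1$, so any path from the origin to the line $\{x=n\}$ has length at least $n$; there are at most $8^{k}$ paths of length $k$; and for $\theta > 7\lambda$ the exponential-moment factor $(\lambda/(\lambda+\theta))^{k}$ beats the entropy $8^{k}$, giving exponential decay of $\bP(\overleftarrow{\tau}\!_{n}^{fpp} \leq cn)$ for small $c>0$ and hence $\liminf_{n} \bE[\overleftarrow{\tau}\!_{n}^{fpp}]/n \geq c$. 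One small point of hygiene: for the identity $\bP(\ecal_{\Gamma} \leq t) = \bP(S_{k} \leq t)$ you need the $k$ edges of $\Gamma$ to be distinct, so the union bound should be taken over \emph{self-avoiding} paths only --- this is harmless, since loop-erasure can only decrease the passage time of a path with nonnegative weights, and the count $8 \cdot 7^{k-1} \leq 8^{k}$ still holds, but it deserves a line. What your route buys is a quantitative and verifiable statement: an explicit lower bound $c$ on the time constant and exponential concentration, valid verbatim on $\gcal$ without having to check that the cited proof survives the change of lattice (which the paper leaves implicit). What the paper's route buys is brevity, and, through the cited theorem, coverage of general passage-time distributions with an atom at zero below the critical probability, of which your continuous exponential case --- as you correctly note via Kesten's criterion --- is the easy instance.
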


In order to use this lemma and show Proposition~\ref{prop:upper_bound_growth}, we need to be able to compare $\overleftarrow{\tau}\!_{x}^{\,(\infty)}$ and $\overleftarrow{\tau}\!_{n}^{\,fpp}$. The main obstacle to this comparison lies in the fact that the $\infty$-parent ancestral process is continuous in space, while the first-passage percolation problem is defined on a graph. Therefore, we now introduce a way to discretise the $\infty$-parent ancestral process.

\subsection{Discretisation of the $\infty$-parent ancestral process}\label{subsec:discretisation}
In order to discretise the $\infty$-parent ancestral process, we first place a grid on $\rmath^{2}$, and associate a cell to each site of the lattice. Let
\begin{equation*}
\vcal := \left\{
(4i,4j) : (i,j) \in \zmath^{2}
\right\}
\end{equation*}
be the underlying grid, and for all $(i,j) \in \zmath^{2}$, let $\ccal_{i,j}$ be the square with centre $(4i,4j)$ and side length~$2$.
That is,
\begin{equation*}
\ccal_{i,j} := \left\{
(x,y) \in \mathbb{R}^{2} : |x - 4i| \leq 1 \text{ and } |y-4j| \leq 1
\right\}.
\end{equation*}
Each $\ccal_{i,j}$, $i,j \in \zmath$ corresponds to the cell associated with the site $(4i,4j)$ of the grid $\vcal$. See Figure~\ref{fig:lattice_ball} for an illustration.
\begin{figure}[t]
\centering
\includegraphics[width = 0.7\linewidth]{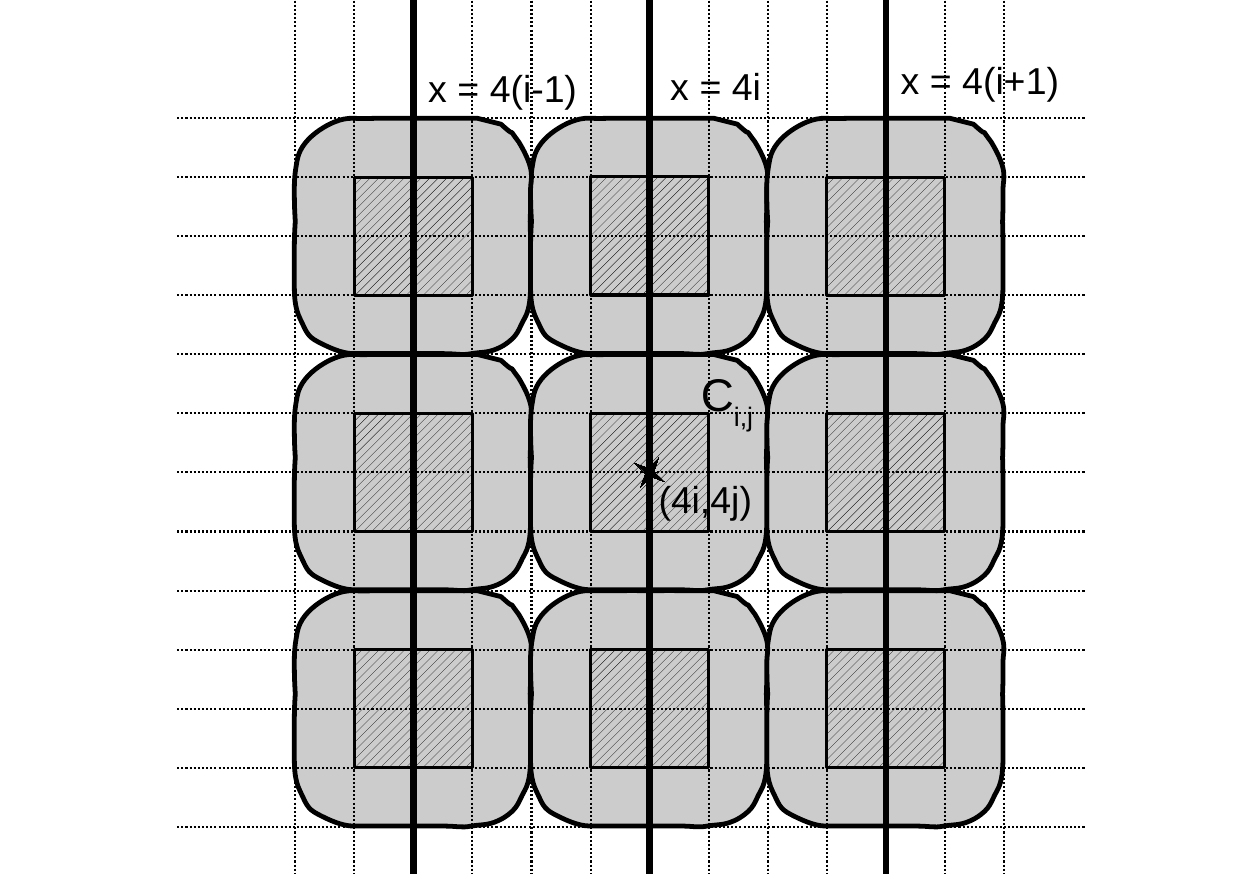}
\caption{Grid used to discretise the $\infty$-parent ancestral process when reproduction events are balls. Each hatched square corresponds to a cell. The grey area around the site $(4i,4j)$, indicated by a black cross, corresponds to the area in which centres of reproduction events have to fall to intersect the cell $\ccal_{i,j}$. The white areas contain all the centres of reproduction events which do not intersect any cell.}\label{fig:lattice_ball}
\end{figure}

This construction satisfies the two following key properties, that are consequences of the fact that all reproduction events are balls of radius $1$.
\begin{enumerate}
\item For all $z = (x,y) \in \rde$, unless $x = 4i+2$, $i \in \mathbb{Z}$ or $y = 4j+2$, $j \in \mathbb{Z}$, the ball $\bcal_{1}(z)$ intersects at most one cell.

In other words, each reproduction event intersects at most one cell a.s. Moreover, if $\bcal_{1}(z)$ does not intersect any cell, then it means that $z$ fell into one of the areas in white on Figure~\ref{fig:lattice_ball}.
\item If we refer to a sequence of reproduction events occurring in chronological order and such that each reproduction event intersects the previous one as a \textit{path of reproduction events}, then any path of reproduction events for which none of the corresponding balls intersect a cell is almost surely confined in one of the white areas on Figure~\ref{fig:lattice_ball}, in the sense that all the reproduction event centres fall into the same white area almost surely.
\end{enumerate}

The discretised version of the $\infty$-parent ancestral process, denoted by $(D_{t})_{t \geq 0}$, is then defined as follows. For all $t \geq 0$, let
\begin{equation*}
D_{t} := \left\{
(i,j) \in \zmath^{2} : \ccal_{i,j} \cap E_{t} \neq \emptyset
\right\}
\end{equation*}
be the set of all cells which intersect the $\infty$-parent ancestral process associated with $\mu^{\leftarrow}=\pi^{-1}\delta_1\otimes\delta_1\otimes\delta_0$ (and with initial condition $(0,0)$) at time $t$. Let $(\overleftarrow{\tau}\!_{n}^{discr})_{n \geq 0}$ be the random variables defined for all $n \geq 0$ by
\begin{equation*}
\overleftarrow{\tau}\!_{n}^{\,discr} := \min \left\{
t \geq 0 : \exists m \in \zmath, (n,m) \in D_{t}
\right\}.
\end{equation*}
Due to the structure of the grid and the size of the cells, we have the following result.

\begin{lem}\label{lem:link_taux_taudisc}
For all $n \in \nmath \setminus \{0\}$, we have
\begin{equation*}
\overleftarrow{\tau}\!_{n}^{\,discr} \leq \overleftarrow{\tau}\!_{4n}^{\,(\infty)} \quad \text{a.s}.
\end{equation*}
\end{lem}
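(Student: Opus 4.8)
The plan is to prove the pointwise statement that, almost surely, at the (random) time $t=\overleftarrow{\tau}\!_{4n}$ there exists $m\in\zmath$ with $(n,m)\in D_t$; since both $(E_t)_{t\geq 0}$ and $(D_t)_{t\geq 0}$ are non-decreasing for the inclusion, this immediately yields $\overleftarrow{\tau}\!_{n}^{discr}\leq\overleftarrow{\tau}\!_{4n}$ (and there is nothing to prove when $\overleftarrow{\tau}\!_{4n}=+\infty$). Fix $t=\overleftarrow{\tau}\!_{4n}$. First I would recall that, once it has performed its first jump at time $t_0$, the process $(E_s)_{s\geq 0}$ is a finite union of radius-$1$ balls each overlapping the previously occupied region and containing $(0,0)$, so that $E_t$ is connected; moreover $t\geq t_0$, because before $t_0$ the process is the single point $\{(0,0)\}$, which does not meet $HP^{4n}$ for $n\geq 1$. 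By definition of $t$ there is a point $p\in E_t$ with $p_x\geq 4n$, while $(0,0)\in E_t$ has first coordinate $0<4n$. Since the first-coordinate map is continuous and $E_t$ is connected, its image is an interval containing $[0,4n]$, so there exists $q=(4n,q_y)\in E_t$ lying exactly on the line $\{x=4n\}$.

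Next I would split on the position of $q_y$. The cells of column $n$ are $\ccal_{n,m}=[4n-1,4n+1]\times[4m-1,4m+1]$, so if $q_y\in[4m-1,4m+1]$ for some $m$ then $q\in\ccal_{n,m}$ directly, whence $E_t\cap\ccal_{n,m}\neq\emptyset$ and $(n,m)\in D_t$, as wanted. The only remaining case is $q_y$ in a gap $(4m+1,4m+3)$. Here I would use that $q$, not being the isolated initial point, belongs to some accepted reproduction ball $\bcal_1(z^{\ast})\subseteq E_t$; since this ball has radius $1$ and contains $q$ with $q_x=4n$, its centre satisfies $z^{\ast}_x\in[4n-1,4n+1]$.

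Finally I would rule out, using the two key geometric properties of the grid stated just before the definition of $(D_t)_{t\geq 0}$, every possibility except that $\bcal_1(z^{\ast})$ meets a column-$n$ cell. If $\bcal_1(z^{\ast})$ met no cell at all, then $z^{\ast}$ would lie in one of the white regions; each white region is contained in a square of side $2$ centred at a corner point $(4i+2,4j+2)$, so $z^{\ast}_x\in(4i+1,4i+3)$ for some $i$, which is incompatible with $z^{\ast}_x\in[4n-1,4n+1]$ (the two ranges are disjoint for every $i$). Hence $\bcal_1(z^{\ast})$ meets a cell, and by the first grid property it meets exactly one, say $\ccal_{i,j}$; intersecting a radius-$1$ ball centred at $z^{\ast}$ forces $z^{\ast}_x\in[4i-2,4i+2]$, which together with $z^{\ast}_x\in[4n-1,4n+1]$ gives $i=n$. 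Thus $E_t\cap\ccal_{n,j}\neq\emptyset$ and $(n,j)\in D_t$, completing the argument.

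The main obstacle is this final geometric case analysis, and specifically the quantitative claim that the white regions lying between two consecutive columns of cells have first coordinates confined to an open interval $(4i+1,4i+3)$ that stays strictly to the left of the left edge $x=4n-1$ of the column-$n$ cells. This is exactly where the proportions of the construction (cells of side $2$, grid spacing $4$, balls of radius $1$) are used: they guarantee both that a ball squeezing through a $y$-gap cannot reach abscissa $4n$ while avoiding the column-$n$ cells, and that each ball meets at most one cell. Verifying these extremal distances carefully, rather than simply reading them off Figure~\ref{fig:lattice_ball}, is the only delicate point; the connectedness and monotonicity steps are routine.
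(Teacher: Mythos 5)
Your proof is correct and rests on the same idea as the paper's: the paper simply takes the ball of the reproduction event occurring at time $\overleftarrow{\tau}\!_{4n}$ (whose centre necessarily has abscissa in $[4n-1,4n+1]$, since the ball must reach $HP^{4n}$ while overlapping the previously covered region, which lies to the left of $x=4n$) and asserts that such a ball a.s.\ meets some cell $\ccal_{n,j}$, so your connectedness/intermediate-value step and white-region case analysis just reconstruct this geometric core explicitly. One remark: the cell intersection follows in one line by vertical projection --- for some $j$ one has $|z^{\ast}_{y}-4j|\leq 2$, hence the point of $\{z^{\ast}_{x}\}\times[4j-1,4j+1]$ nearest to $z^{\ast}$ lies within distance $1$ of $z^{\ast}$ and belongs to $\ccal_{n,j}$ since $z^{\ast}_{x}\in[4n-1,4n+1]$ --- so your classification of the white regions, while correctly verified, is not actually needed.
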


\begin{proof}
Let $n \in \nmath \setminus \{0\}$. Let $z \in \rmath^{2}$ be the centre of the reproduction event which occurs at time $\overleftarrow{\tau}\!_{4n}^{\,(\infty)}$ and makes the $\infty$-parent ancestral process reach for the first time a point at horizontal separation $4n$ from the origin. Then, a.s. there exists $j \in \zmath$ such that
\begin{equation*}
\bcal_{1}(z) \cap \ccal_{n,j} \neq \emptyset,
\end{equation*}
and hence $\overleftarrow{\tau}\!_{n}^{\,discr} \leq \overleftarrow{\tau}\!_{4n}^{\,(\infty)}$.
\end{proof}

In all that follows, for all $(i,j), (i',j') \in \zmath^{2}$, we say that
\begin{itemize}
\item the cell $\ccal_{i,j}$ is \textit{active} at time $t$ if $(i,j) \in D_{t}$;
\item the cell $\ccal_{i,j}$ is \textit{activated} at time $t$ if $(i,j) \in D_{t}$ and $(i,j) \notin D_{t-}$;
\item the cell $\ccal_{i',j'}$ \textit{activates} $\ccal_{i,j}$ at time $t$ if there exists $s < t$ such that $\ccal_{i',j'}$ is active at time $s$, and if there exists a path of reproduction events starting from $\ccal_{i',j'}$ at time $s$, initially overlapping an area of $\ccal_{i',j'}$ containing type~$1$ individuals, and reaching $\ccal_{i,j}$ for the first time at time $t$ while not intersecting any other cell in the time interval $[s,t]$.
\end{itemize}
Notice that under this terminology, a cell can activate another one which is already active. Moreover, with probability one the only cells that the cell $\ccal_{i',j'}$ can activate are its nearest neighbours in the graph $\gcal$, that is, the cells $\ccal_{i,j}$ such that
\begin{equation*}
\sqrt{(i-i')^{2}+(j-j')^{2}} \leq \sqrt{2}.
\end{equation*}

\subsection{Comparison to the first-passage percolation problem}\label{sec:section_4_3}
Let us now compare the growth of the discretised $\infty$-parent ancestral process $(D_{t})_{t \geq 0}$ to that of the first-passage percolation problem introduced earlier. Recall that
\begin{equation*}
\overleftarrow{\tau}\!_{n}^{fpp} := \min\left\{
t \geq 0: \exists m \in \zmath, \ecal_{(0,0),(n,m)} \leq t
\right\}
\end{equation*}
is the time needed  by the process associated with the first-passage percolation problem to reach a point at horizontal separation $n$ from the origin, starting from the origin.

\begin{prop}\label{prop:comparison_discr_fpp}
The random variable $\overleftarrow{\tau}\!_{n}^{\,discr}$ is stochastically bounded from below by $\overleftarrow{\tau}\!_{n}^{\,fpp}$. That is, for all $t \geq 0$,
\begin{equation*}
\bP\left(
\overleftarrow{\tau}\!_{n}^{\,discr} \geq t
\right) \geq \bP\left(
\overleftarrow{\tau}\!_{n}^{\,fpp} \geq t
\right).
\end{equation*}
\end{prop}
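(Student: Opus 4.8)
The plan is to build a coupling between the discretised process $(D_t)_{t\geq 0}$ and the first-passage percolation growth on $\gcal$ under which the set of FPP-infected sites always contains the set of active cells, so that $\overleftarrow{\tau}_n^{discr}\geq \overleftarrow{\tau}_n^{fpp}$ holds almost surely in the coupling and the stochastic domination follows. The whole argument rests on bounding, conditionally on the past, the rate at which an inactive cell can be activated.

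First I would isolate the reproduction events relevant to a given cell. For $(i,j)\in\zmath^{2}$ set $B_{i,j}:=[4i-2,4i+2]\times[4j-2,4j+2]$, the square of side $4$ centred at $(4i,4j)$. Since all events are balls of radius $1$, an event can intersect $\ccal_{i,j}$ only if its centre lies within distance $1$ of $\ccal_{i,j}$, hence only if its centre lies in $B_{i,j}$. The boxes $B_{i,j}$ tile $\rde$ with pairwise intersections of zero Lebesgue measure, so the point processes of event-centres falling into distinct boxes are independent Poisson processes; as $\overline{\Pi}$ has spatial intensity $\pi^{-1}$ and $\mathrm{Vol}(B_{i,j})=16$, the events with centre in $B_{i,j}$ form a Poisson process of rate $16\times\pi^{-1}$. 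Combined with the already-noted fact that a cell can only be activated by one of its $\gcal$-neighbours, this shows that $\ccal_{i,j}$ can become active only (a) after some $\gcal$-neighbour is active, and (b) through an event with centre in $B_{i,j}$.

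From this I would extract the key per-cell rate bound. Let $A_{i,j}$ be the activation time of $\ccal_{i,j}$ and let $\tau_{i,j}:=\min\{A_{i',j'}:(i',j')\sim(i,j)\}$ be the first time a neighbour is active. Since activation requires an event with centre in $B_{i,j}$ occurring after $\tau_{i,j}$, the strong Markov property applied to the rate-$(16\times\pi^{-1})$ Poisson process on $B_{i,j}$ gives $A_{i,j}\geq \tau_{i,j}+R_{i,j}$, where the residual waiting time $R_{i,j}$ is $Exp(16\times\pi^{-1})$-distributed and independent of the history up to $\tau_{i,j}$. Equivalently, conditionally on the full past, the instantaneous rate at which any inactive cell possessing an active neighbour becomes active is at most $16\times\pi^{-1}$. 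I would then compare with the FPP growth written in its equivalent Richardson form: by the memorylessness of the $Exp(16\times\pi^{-1})$ edge weights, the infected set grows Markovianly, each inactive site with $k\geq 1$ infected neighbours becoming infected at rate $16k\times\pi^{-1}\geq 16\times\pi^{-1}$. Since the discretised activation rate never exceeds this infection rate, a standard monotone (graphical) coupling of the two growth dynamics keeps the active set inside the infected set for all times, whence $A_{i,j}\geq \ecal_{(0,0),(i,j)}$ for every $(i,j)$; taking the minimum over $\{(n,m):m\in\zmath\}$ yields $\overleftarrow{\tau}_n^{discr}\geq\overleftarrow{\tau}_n^{fpp}$ in the coupling and hence the claimed stochastic bound.

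The main obstacle is the conditional rate bound and its use in the coupling. The activation times of different cells are far from independent — they are all produced by the single connected ancestral process $(E_t)_{t\geq 0}$, and $R_{i,j}$ is a residual time after the complicated stopping time $\tau_{i,j}$ — so the delicate point is to verify that, conditionally on the entire history, the activation rate of each not-yet-active cell with an active neighbour is genuinely at most $16\times\pi^{-1}$, uniformly, so that the monotone coupling with the Richardson model is valid. Care is also needed to legitimately identify the edge-weighted FPP first-passage times with the infection times of the Richardson growth; this is where the choice of an exponential passage-time law, and the matching rate $16\times\pi^{-1}$, is used.
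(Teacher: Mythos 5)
Your argument is correct, and it reaches the conclusion by a genuinely different final step from the paper. The shared core is the same geometric rate bound: an event can touch $\ccal_{i,j}$ only if its centre lies within distance $1$ of the cell, a region of area $12+\pi \leq 16$, so under the spatial intensity $\pi^{-1}$ the activation intensity of any fixed cell is at most $16\pi^{-1}$ uniformly in the past; your box $B_{i,j}$ of area exactly $16$ dominates this region, and your strong-Markov statement $A_{i,j} \geq \tau_{i,j} + R_{i,j}$ with $R_{i,j} \sim Exp(16\times\pi^{-1})$ is essentially the edge-wise comparison the paper makes. Where you diverge is in how this bound yields the domination. The paper explicitly declines to build a coupling (its closing remark notes that correlations between activations of neighbouring cells would make a coupling delicate) and instead argues directly that the time for a cell to activate, once a neighbour is active, stochastically dominates a single edge passage time $\ecal_{e}$, concluding from there rather tersely. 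You sidestep the correlation problem by abandoning the edge-weight picture: you pass to the Markovian (Richardson) representation of FPP with $Exp(16\times\pi^{-1})$ weights, in which an uninfected site with $k \geq 1$ infected neighbours is infected at rate $16k\times\pi^{-1} \geq 16\times\pi^{-1}$, and you run a monotone thinning/superposition coupling driven by domination of conditional intensities. This is valid: the intensity bound holds conditionally on the entire history (activation requires a Poisson point with centre in a fixed region, whatever the configuration); activation a.s.\ occurs only at cells having an already active --- hence, inductively, infected --- $\gcal$-neighbour; at most one cell is activated at any event time; and a counting process whose jump intensities relative to the enlarged filtration are functions of the infected set alone is indeed Markov with those rates. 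Your route buys a pathwise inequality $A_{i,j} \geq \ecal_{(0,0),(i,j)}$ inside the coupling, which makes the passage from the per-cell bound to the statement about $\overleftarrow{\tau}\!_{n}^{discr}$ completely transparent, and it is arguably more robust than the paper's unelaborated ``we can conclude''; the paper's route is shorter and avoids justifying a coupling construction. One small remark: the independence of the Poisson processes on distinct boxes, while true (the boxes overlap in Lebesgue-null sets only), is never actually needed --- the uniform conditional-intensity bound alone drives the monotone coupling.
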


\begin{proof}
We recall that for all $e \in \mathcal{G}$, $\ecal_{e} \sim \ecal\text{xp}(16\pi^{-1})$ is the passage time of edge~$e$. In order to show Proposition~\ref{prop:comparison_discr_fpp}, we show that cells are activated faster in the first-passage percolation problem than in the discretized $\infty$-parent ancestral process, and conclude by induction on the number of cells reached.

First, we observe that for both processes, cells are activated one after another a.s., and a cell cannot be activated if all its neighbours are inactive. Therefore, we can focus on the time needed for a cell to become active once (a.s. exactly) one of its neighbours becomes active. Regarding the first-passage percolation process, this time is bounded from above by $\ecal_{e}$, where $e$ is the edge connecting the active neighbouring cell to the focal cell. Regarding the discretised $\infty$-parent ancestral process, this time is bounded from below by the time needed for the cell to be intersected by a reproduction event (which is a prerequisite for the cell to become active). Such reproduction events occur at a rate bounded from above by $16\pi^{-1}$. Therefore, the time needed for the cell to become active in the discretised $\infty$-parent ancestral process is stochastically bounded from below by $\ecal_{e}$.
\end{proof}

\begin{rem}
Due to correlations between activations by neighbouring cells, it would be more difficult to construct a coupling with the first-passage percolation problem. However, the stochastic comparison we obtained is sufficient.
\end{rem}

We can now show Proposition~\ref{prop:upper_bound_growth}.

\begin{proof}(Proposition~\ref{prop:upper_bound_growth})
By Proposition \ref{prop:comparison_discr_fpp} and Lemma \ref{lem:link_taux_taudisc}, for all $n \in \nmath \setminus \{0\}$,
\begin{equation*}
\bE\big[\overleftarrow{\tau}\!_{4n}^{\,(\infty)}\big] \geq \bE\big[\overleftarrow{\tau}\!_{n}^{\,discr}\big] \geq \bE\big[\overleftarrow{\tau}\!_{n}^{\,fpp}\big].
\end{equation*}
By Lemma \ref{lem:cvg_esp_fpp}, we know that
\begin{equation*}
\lim\limits_{n \to + \infty} \frac{\bE\big[\overleftarrow{\tau}\!_{n}^{\,fpp}\big]}{4n} > 0.
\end{equation*}

Since we know that $\lim\limits_{n \to + \infty} \bE\big[\overleftarrow{\tau}\!_{n}^{\,(\infty)}\big] \times n^{-1}$ exists by Proposition~\ref{prop:lower_bound_growth}, we obtain that
\begin{equation*}
\lim\limits_{n \to + \infty} \frac{\bE\big[\overleftarrow{\tau}\!_{4n}^{\,(\infty)}\big]}{4n} > 0.
\end{equation*}
We conclude by using the fact that $x \to \overleftarrow{\tau}\!_{x}^{\,(\infty)}$ is nondecreasing by Lemma~\ref{lem:la_suite_est_croissante}.
\end{proof}

We conclude this section with the proof of Theorem~\ref{thm:speed_growth_infty_slfv}.

\begin{proof}(Theorem \ref{thm:speed_growth_infty_slfv})
By Proposition~\ref{prop:lower_bound_growth}, we know that there exists $\nu^{(\infty)} \geq 0$ such that
\begin{equation*}
\lim\limits_{x \to + \infty} \frac{\bE\big[\overleftarrow{\tau}\!_{x}^{(\infty)}\big]}{x} = \nu^{(\infty)}.
\end{equation*}
Moreover, by Proposition~\ref{prop:upper_bound_growth}, we have
\begin{equation*}
\lim\limits_{x \to + \infty} \frac{\bE\big[\overleftarrow{\tau}\!_{x}^{(\infty)}\big]}{x} > 0.
\end{equation*}
Therefore, $\nu^{(\infty)} > 0$ and we conclude by using Proposition~\ref{prop:taux_some_distr}.
\end{proof}

\section{Speed of growth of the $k$-parent SLFV}\label{sec:k-parent}
In this section, we prove Theorem~\ref{thm:speed_growth_k_slfv}. To this end, we combine Proposition~\ref{prop:backwards_k_slfv} with estimates on the probability that the reflected ancestral process $\overleftarrow{\Xi}^{(k)}$ reaches horizontal distance $x$ before time~$t$, that are derived using a weaker version of the subadditivity argument developped in Section~\ref{sec:lower_bound}.

Fix $k\geq 2$. Recall that $\overleftarrow{\Pi}^{+}$ is a Poisson point process on $\mathbb{R}_+\times \rde\times S_\mu\times (\rde)^\infty$ with intensity measure $dt \otimes dz \otimes \mu^{\leftarrow}(da,db,d\gamma)\tilde{u}_{(a,b,\gamma)}(d\mathbf{p})$ defined on the probability space $(\mathbf{\Omega}, \mathbf{\fcal}, \mathbf{P})$, and that $(\overleftarrow{\Xi}_{t}^{(k)})_{t \geq 0}$ is the $k$-parent ancestral process with initial condition $\delta_{(0,0)}$ constructed using $\overleftarrow{\Pi}^{+}$. For all $x > 0$, we set
\begin{align*}
\overleftarrow{\tau}\!_{x}^{\,(k)} &:= \min\left\{
t \geq 0 : A\left(
\overleftarrow{\Xi}_{t}^{(k)}
\right) \cap \text{H}^{x} \neq \emptyset
\right\} \\
\text{and } \overleftarrow{T}\!_{x}^{\,(k)} &:= \min\left\{
t \geq 0 : \forall t' \geq t, A\left(
\overleftarrow{\Xi}_{t'}^{(k)}
\right) \cap \text{H}^{x} \neq \emptyset
\right\}.
\end{align*}
The strategy is as follows. First, we use an approach analogous to that in Section~\ref{sec:lower_bound} to obtain that:
\begin{lem}\label{lem:cvg_tau_k}
There exists $\nu^{(k)} \geq \nu^{(\infty)}$ such that
\begin{equation*}
\lim\limits_{x \to + \infty} \frac{\bE\Big[
\overleftarrow{\tau}\!_{x}^{\,(k)}
\Big]}{x} = \nu^{(k)}.
\end{equation*}
\end{lem}
The proof of Lemma~\ref{lem:cvg_tau_k} uses an \textit{express chain} which encodes a special line of ascent (which jumps onto the location of the ``right-most parent'' each time the current position of the chain is overlapped by an event). We then use this express chain to show that:
\begin{lem}\label{lem:k_tau_T}
There exists $C_1>0$ such that for all $x\geq 0$,
\begin{equation*}
\bE\left[
\overleftarrow{\tau}\!_{x}^{\,(k)}
\right] \leq \bE\left[
\overleftarrow{T}\!_{x}^{\,(k)}
\right] \leq \bE\left[
\overleftarrow{\tau}\!_{x}^{\,(k)}
\right] + C_1.
\end{equation*}
\end{lem}
Using Lemma~\ref{lem:k_tau_T}, we shall be able to conclude that $\nu^{(k)}$ is also the limit of $x^{-1}\bE[\overleftarrow{T}\!_{x}^{\,(k)}]$, and then a simple use of the Markov inequality will give us the desired result. The proof of Theorem~\ref{thm:speed_growth_k_slfv} can be found at the end of this section.

\begin{proof}(Lemma~\ref{lem:cvg_tau_k}.)
We follow the same strategy as in the proof of Proposition~\ref{prop:lower_bound_growth} and show that the sequence $(\bE[\overleftarrow{\tau}\!_{4n\rcal_{\mu}}^{(k)}])_{n \geq 0}$ is subadditive. To do so, we introduce the following family of random variables.
For all $n \in \mathbb{N}$, let
\begin{align*}
T_{0,n}^{(k)} &:= \min\left\{t \geq 0 : A\left(
\overleftarrow{\Xi}_{t}^{(k)}
\right) \cap \hp^{4n\rcal_{\mu}} \neq \emptyset
\right\} = \overleftarrow{\tau}\!_{4n\rcal_{\mu}}^{\,(k)},
\end{align*}
and let $P_{n}^{(k)}$ be sampled uniformly at random among the points in
\begin{equation*}
A(\overleftarrow{\Xi}_{T_{0,n}^{(k)}}^{(k)}) \cap \hp^{4n\rcal_{\mu}}.
\end{equation*}
Note that in contrast to the analogous variable $P_n$ defined in Section~\ref{subsec:subadditivity}, almost surely $P_n^{(k)}$ is not at horizontal separation of exactly $4n\rcal_{\mu}$ of $(0,0)$ (\textit{i.e.}, it has a first coordinate $(P_n^{(k)})_1>4n\rcal_{\mu}$ a.s.).

Next, for all $m \in \mathbb{N}$, let $(\overleftarrow{\Xi}_{t}^{(k),m})_{t \geq 0}$ be the $k$-parent ancestral process started from $\delta_{P_{m}^{(k)}}$, constructed using only the reproduction events in $\overleftarrow{\Pi}^{+}$ occurring after time $T_{0,m}^{(k)}$. We then set for all $n\geq m$
\begin{equation*}
T_{m,n}^{(k)} := \min\left\{
t \geq 0 : A\left(
\overleftarrow{\Xi}_{t+T_{0,m}^{(k)}}^{(k),m}
\right) \cap \hp^{4n\rcal_{\mu}} \neq \emptyset
\right\}.
\end{equation*}
By construction, we have that for all $0 \leq m < n \in \nmath$,
\begin{align*}
T_{0,n}^{(k)} &\leq T_{0,m}^{(k)} + T_{m,n}^{(k)} \\
\text{and} \quad \bE\left[
T_{m,n}^{(k)}
\right] &\leq \bE\left[
\overleftarrow{\tau}\!_{4(n-m)\rcal_{\mu}}^{\,(k)}
\right].
\end{align*}
Therefore,
\begin{equation*}
\bE\left[
\overleftarrow{\tau}\!_{4n\rcal_{\mu}}^{\,(k)}
\right] \leq \bE\left[
\overleftarrow{\tau}\!_{4m\rcal_{\mu}}^{\,(k)}
\right] + \bE\left[
\overleftarrow{\tau}\!_{4(n-m)\rcal_{\mu}}^{\,(k)}
\right],
\end{equation*}
and the family $(\bE[\overleftarrow{\tau}\!_{4n\rcal_{\mu}}^{\,(k)}])_{n \geq 0}$ is subadditive.

In order to apply Fekete's subadditive lemma~\cite{fekete1923} and conclude, we now need to show that there exists $c_k > 0$ such that
\begin{equation*}
\forall n \geq 0,\quad  \bE\left[
\overleftarrow{\tau}\!_{4n\rcal_{\mu}}^{\,(k)}
\right] \leq c_k n.
\end{equation*}
To do so, we proceed again as in the proof of Proposition~\ref{prop:lower_bound_growth}, and construct an express chain for the $k$-parent SLFV, denoted $(C_{t}^{(k)})_{t \geq 0}$ and defined as follows.
\begin{itemize}
\item First we set $C_{0}^{(k)} = (0,0)$.
\item Then, for all $(t,z_{c},a,b,\gamma,(p_{n})_{n \geq 1}) \in \overleftarrow{\Pi}^{+}$, if $C_{t-}^{(k)} \in \bfrak_{a,b,\gamma}(z_{c})$, we write $z_{c} = (x_{c},y_{c})$ and $p_{n} = (x_{n},y_{n})$. We define $L$ to be the index of the (almost surely unique) point in $p_1,\ldots,p_k$ with maximal abscissa (that is, the right-most ``potential parent'' sampled). Recalling that all $p_n$ are sampled from $\bfrak_{a,b,\gamma}((0,0))$, we set
$C_{t}^{(k)} = z_{c}+p_{L}$.
\item We do nothing otherwise.
\end{itemize}
By construction (since $k\geq 2$), $\bE[x_{L}-x_{c}] > 0$, and for all $n \in \nmath \backslash \{0\}$ and $t \geq 0$, we have
\begin{equation*}
\left\{
C_{t}^{(k)} \in \text{H}^{4n\rcal_{\mu}}
\right\} \subseteq \left\{
\overleftarrow{\tau}\!_{4n\rcal_{\mu}}^{\,(k)} \leq t
\right\}.
\end{equation*}
For all $n \in \nmath$, we set
\begin{equation*}
T_{0,n}^{\text{express},(k)} := \min\left\{
t \geq 0 : C_{t}^{(k)} \in \text{H}^{4n\rcal_{\mu}}
\right\}.
\end{equation*}
Using the same arguments as in Section~\ref{sec:section_2_3}, we obtain the existence of $c_k>0$ such that for all $n \in \nmath$,
\begin{equation*}
\bE\left[
\overleftarrow{\tau}\!_{4n\rcal_{\mu}}^{\,(k)}
\right] \leq \bE\left[
T_{0,n}^{\text{express},(k)}
\right] \leq c_k n.
\end{equation*}
We can now apply Fekete's subadditive lemma and obtain the existence of $\nu^{(k)} \geq 0$ such that
\begin{equation*}
\lim\limits_{n \to + \infty} \frac{\bE\left[
\overleftarrow{\tau}\!_{4n\rcal_{\mu}}^{(k)}
\right]}{4n\rcal_{\mu}} = \nu^{(k)},
\end{equation*}
from which we deduce that
\begin{equation*}
\lim\limits_{x \to + \infty} \frac{\bE\big[
\overleftarrow{\tau}\!_{x}^{\,(k)}
\big]}{x} = \nu^{(k)}
\end{equation*}
by monotonicity. The inequality $\nu^{(k)}\geq \nu^{(\infty)}$ comes from the obvious fact that for all $x>0$,
\begin{equation*}
\bE\left[
\overleftarrow{\tau}\!_{x}^{\,(k)}
\right] \geq \bE\left[\overleftarrow{\tau}\!_{x}^{\,(\infty)}\right].
\end{equation*}
\end{proof}
\begin{proof}(Lemma~\ref{lem:k_tau_T}.) Let $x\geq 0$. By construction, $\overleftarrow{\tau}\!_{x}^{\,(k)} \leq \overleftarrow{T}\!_{x}^{\,(k)}$, which gives us the first inequality. For the second inequality, observe first that for all $z\in \mathbb{R}^2$,
\begin{equation}\label{eqn:eventually occupied}
\mathbf{P}\Big(\exists t\geq 0:\ \forall t'\geq t,\, A\left(\overleftarrow{\Xi}_{t'}^{(k)}\right)\cap H^x \neq \emptyset \, \Big|\, \overleftarrow{\Xi}_{0}^{(k)} = \delta_{z}\Big) =1,
\end{equation}
and there exists $p_0\in (0,1)$ independent of $x$ such that for all $z\in H^x$,
\begin{equation}\label{eqn:always occupied}
\mathbf{P}\Big(\forall t\geq 0,\, A\left(\overleftarrow{\Xi}_{t}^{(k)}\right)\cap H^x \neq \emptyset \, \Big|\, \overleftarrow{\Xi}_{0}^{(k)} = \delta_{z}\Big) \geq p_0.
\end{equation}
Indeed, starting the \textit{express chain} $(C_t^{(k)})_{t\geq 0}$ from $z\in \mathbb{R}^2$, we obtain a random walk whose jumps are bounded and have a positive average first coordinate. Therefore, with probability $1$ the random walk made of the first coordinate of $C^{(k)}$ drifts towards $+\infty$ and there exists $t\geq 0$ such that for all $t'\geq t$, $C^{(k)}_{t'}\in H^x$. In addition, when $z\in H^x$, the probability that this time $t$ is equal to $0$ (and thus $C^{(k)}$ has its whole trajectory in $H^x$) is positive and by the invariance properties of its dynamics, this probability is independent of the second coordinate of $z$ and is minimal for $z_1=x$. We write $p_0>0$ for this minimal probability. Since $C_t^{(k)}$ always belongs to the set of atoms of $\overleftarrow{\Xi}^{(k)}_t$ (by construction), the first part of our argument yields~\eqref{eqn:eventually occupied} and the second part yields~\eqref{eqn:always occupied}. In particular, this tells us that $\overleftarrow{T}\!_x^{\,(k)}<\infty$ almost surely. Pushing the analysis of the \textit{express chain} one step further and decomposing the trajectory of $C^{(k)}$ into excursions away from $H^x$ (whose time lengths are integrable, independently of $x$, since the first coordinate of $C^{(k)}$ performs a one-dimensional random walk with bounded jumps of positive mean) until the first time at which the trajectory of $C^{(k)}$ remains in $H^x$ for ever after (the number of such excursions being stochastically bounded by a geometric random variable with parameter $p_0$), we obtain that there exists $C_1>0$ satisfying
\begin{equation}\label{eqn:expected return}
\sup_{z\in H^x} \mathbf{E}\Big[\inf\Big\{t\geq 0:\ \forall t'\geq t,\, C_{t'}^{(k)}\in H^x\Big\}\, \big|\, C^{(k)}_0=z\Big] \leq C_1.
\end{equation}
Coming back to the case where $\overleftarrow{\Xi}^{(k)}_0=\delta_{(0,0)}$, using the strong Markov property of $\overleftarrow{\Xi}^{(k)}$ at time $\overleftarrow{\tau}\!_x^{\,(k)}$ and the fact that for every $t$, $C_t^{(k)}$ belongs to the set of atoms of $\overleftarrow{\Xi}^{(k)}_t$, enables us to write that
\begin{equation}\label{eqn:control expt}
\mathbf{E}\Big[\overleftarrow{T}\!_x^{\,(k)} - \overleftarrow{\tau}\!_x^{\,(k)}\Big] \leq C_1,
\end{equation}
which gives us the desired result.
\end{proof}

Let us conclude this section with the proof of Theorem~\ref{thm:speed_growth_k_slfv}.
\begin{proof} (Theorem~\ref{thm:speed_growth_k_slfv}.) By Lemmas~\ref{lem:cvg_tau_k} and \ref{lem:k_tau_T}, we have
$$
\lim\limits_{x \to + \infty} \frac{\bE\Big[\overleftarrow{T}\!_{x}^{\,(k)}\Big]}{x} = \nu^{(k)}.
$$
Therefore, for every $\varepsilon>0$, there exists $x_0(\varepsilon)>0$ such that for all $x\geq x_0(\varepsilon)$,
\begin{equation}\label{eqn:inequ}
\bE\Big[\overleftarrow{T}\!_{x}^{\,(k)}\Big] \leq \big(\nu^{(k)} +\varepsilon\big)x.
\end{equation}
Fix $\varepsilon>0$ and let $x\geq x_0(\varepsilon)$. By Proposition~\ref{prop:backwards_k_slfv}, we have for every $t>0$
\begin{align*}
\proba\left(
\limeps V_{\epsilon}^{-1} \int_{\bcal_{\epsilon}((x,0))} w_{t}^{(k)}(z)dz = 0
\right) &= \mathbf{P}\left(
A\left(
\overleftarrow{\Xi}_{t}^{(k)}
\right) \cap \text{H}^{x} = \emptyset
\right) \\
&\leq \mathbf{P}\left(
\exists t' \geq t, A\left(
\overleftarrow{\Xi}_{t'}^{(k)}
\right) \cap \text{H}^{x} = \emptyset
\right) \\
&= \mathbf{P}\left(
\overleftarrow{T}\!_{x}^{(k)} > t
\right).
\end{align*}
We conclude by using the Markov inequality together with \eqref{eqn:inequ}.
\end{proof}

\section{Numerical simulations}\label{sec:numerical_simulations}
The goal of this section is to obtain an approximation for the limiting speed of growth $(\nu^{(\infty)})^{-1}$ by means of numerical simulations, and compare it to the lower bound given by the proof of Theorem~\ref{thm:speed_growth_infty_slfv}. To do so, we use the fact that
\begin{equation*}
(\nu^{(\infty)})^{-1} = \left(\lim\limits_{x \to + \infty} x^{-1}\bE[\overleftarrow{\tau}\!_{x}^{\,(\infty)}]\right)^{-1},
\end{equation*}
which is a consequence of Theorem~\ref{thm:speed_growth_infty_slfv} combined with Proposition~\ref{prop:taux_some_distr}. Indeed, the $\infty$-parent ancestral process is easier to simulate than its forwards-in-time counterpart, since it jumps at a finite rate at any time, and since there are no border effects to take into account while simulating the process on an appropriately chosen compact subset of $\mathbb{R}^{2}$.

We focus on the case in which all ellipses have the same shape parameters. In order to be able to compare the speed of growth of the occupied regions in $\infty$-parent SLFV with different shape parameters, we assume that any given location $z \in \mathbb{R}^{2}$ is affected by a reproduction event at rate~$1$. Therefore, we take
\begin{equation*}
\mu(da,db,d\gamma) = V_{a_0,b_0}^{-1} \delta_{a_{0}}(da) \otimes \delta_{b_{0}}(db) \otimes \delta_{0}(d\gamma),
\end{equation*}
where $a_{0} \in (0,+\infty)$ and $b_{0}$ is chosen such that the volume of the corresponding ellipse is equal to $\pi$.

For $9$ different values of $a$ ranging from $0.33$ to $3$, we simulate $30$ $\infty$-parent ancestral processes with initial condition $\{(0,0)\}$ and parameters $(a,b,0)$, where $b$ is chosen as stated above. This ensures that we compare $\infty$-parent SLFV processes for which reproduction events have the same scale.
\begin{figure}[t]
\centering
\includegraphics[width = 0.7\linewidth]{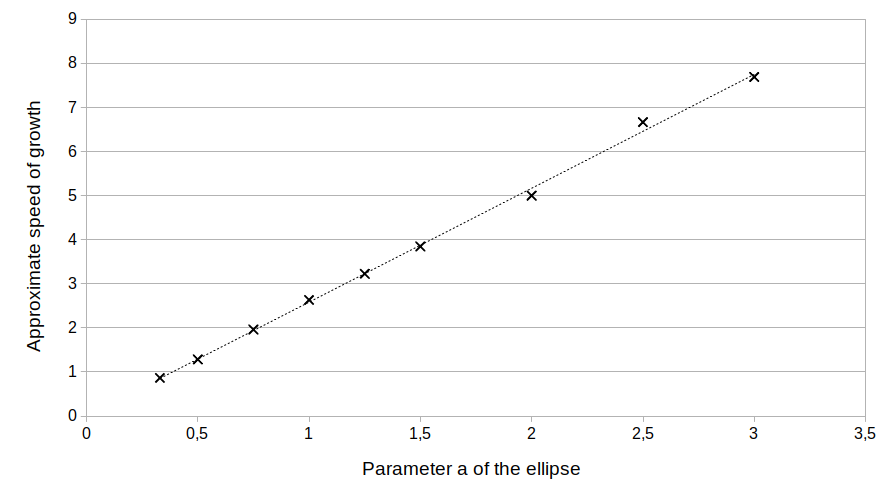}
\caption{Approximate speed of growth of the occupied area in the $\infty$-parent SLFV process, as a function of $a$. For each value of $a$, $30$ $\infty$-parent ancestral processes with parameters $(a,b,0)$ were simulated, in order to compute the expected time to reach horizontal distance $x$ for large values of $x$. The crosses indicate the approximate values, and the dotted line corresponds to the linear function $\nu^{-1}(a) = 2.58a$.}\label{fig:speed of growth}
\end{figure}
The results can be found in Figure~\ref{fig:speed of growth}. The numerical simulations show that the speed of growth is a linear function of $a$ (over the range of $a$-values considered). This speed is around $2.6$ times higher than the lower bound obtained in Section~\ref{sec:lower_bound} (equal to $a$ when $\gamma = 0$, see Remark~\ref{rem:speed in example}), which we had initially conjectured to be the limiting speed of the process.

\begin{figure}[t]
\centering
\begin{subfigure}[b]{0.2\textwidth}
\includegraphics[width = 0.8\linewidth]{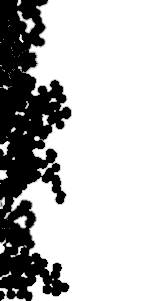}
\end{subfigure}
\begin{subfigure}[b]{0.2\textwidth}
\includegraphics[width = 0.8\linewidth]{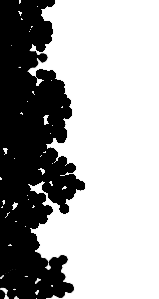}
\end{subfigure}
\begin{subfigure}[b]{0.2\textwidth}
\includegraphics[width = 0.8\linewidth]{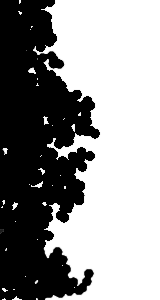}
\end{subfigure}
\begin{subfigure}[b]{0.2\textwidth}
\includegraphics[width = 0.8\linewidth]{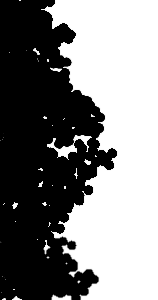}
\end{subfigure}\vspace{0.2cm}
\caption{Illustration of the growth dynamics of the occupied region in the $\infty$-parent SLFV, with $(a,b,\gamma) = (1,1,0)$. The images represent the same $\infty$-parent SLFV at four instants $0 < t_{1} < t_{2} < t_{3} < t_{4}$. The black area represents the area occupied by real individuals, and the white area is empty. The expansion starts from the left-most part of the image, and goes towards the right-most part of the image.}\label{fig:spikes}
\end{figure}

Numerical simulations of the $\infty$-parent SLFV itself suggest that the growth of the process is driven by ``spikes'' in the expansion direction, see Figure~\ref{fig:spikes} for an example. These spikes are relatively rare, but then they thicken and grow sideways, bridging the gap with the rest of the population. Therefore, they can increase the speed of growth of the process significantly. Moreover, a ``spike-driven'' growth would have important implications in terms of genetic diversity at the front edge. The study of the contribution of the spikes to the overall growth will be the object of a follow-up study.

\smallskip
\begin{acknowledgements}
\quad This work was partly supported by a grant from the British Royal Society and by the \textit{chaire} programme ``Mathematical Modelling and Biodiversity'' of Veolia Environnement -- Ecole Polytechnique -- Museum National d'Histoire Naturelle -- Fondation X. AL would like to thank Matt Roberts, Laetitia Colombani and Carl Graham for insightful discussions. AL and AV thank the two Reviewers and the Associate Editor for their very careful reading and their relevant suggestions.
\end{acknowledgements}

\bibliographystyle{plain}
\bibliography{bibliographie_vitesse_croissance_infty_slfv}

\appendix
\section{Geometrical properties of ellipses}\label{appendix:geom_ellipse}
In this section, we show some geometrical properties of ellipses, which are used in other sections. In all that follows, let $z_{c} = (x_{c},y_{c}) \in \rde$, $(a,b) \in (0,+\infty)$ and $\gamma \in (-\pi/2,\pi/2)$. Recall that $\bfrak_{a,b,\gamma}(z)$ is the ellipse defined by:
\begin{equation*}
\bfrak_{a,b,\gamma}(z_{c}) = \left\{\begin{pmatrix}
x_{c} \\
y_{c}
\end{pmatrix}
+ A_{\gamma} \begin{pmatrix}
ar\cos(\theta) \\
br \sin(\theta)
\end{pmatrix} : r \in [0,1], \theta \in [0,2\pi)
\right\}
\end{equation*}
where
\begin{equation*}
A_{\gamma} = \begin{pmatrix}
\cos(\gamma) & - \sin(\gamma) \\
\sin(\gamma) & \cos(\gamma)
\end{pmatrix}.
\end{equation*}

The first lemma gives the maximal \textit{horizontal separation} between a point in the ellipse and its centre. This result is used in Section \ref{sec:lower_bound} to construct the express chain.
\begin{lem}\label{lem:ellipse_1}
Let $f : [0,1] \times [-\pi,\pi) \to \rmath$ be the function defined by
\begin{equation*}
\forall\, (r,\theta) \in [0,1] \times [-\pi,\pi),
f(r,\theta) = ar\cos(\theta)\cos(\gamma)-br\sin(\theta)\sin(\gamma).
\end{equation*}
Then, $f$ reaches its maximum for
\begin{equation*}
(r_{max},\theta_{\max}) = \left(1, \arctan\left(
-\frac{b}{a}\tan(\gamma)
\right)\right),
\end{equation*}
and
\begin{equation*}
f(r_{max},\theta_{max}) = \dhor.
\end{equation*}
\end{lem}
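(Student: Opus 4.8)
The plan is to exploit the fact that $f$ is linear in $r$ and a single trigonometric harmonic in $\theta$, so that the two variables decouple. First I would factor out $r$ and write $f(r,\theta) = r\, g(\theta)$, where
\[
g(\theta) := a\cos(\gamma)\cos(\theta) - b\sin(\gamma)\sin(\theta).
\]
This reduces the problem to maximising $g$ over $\theta \in [-\pi,\pi)$ and then choosing the optimal $r \in [0,1]$ afterwards.

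Next, I would recognise $g$ as an expression of the form $A\cos(\theta) + B\sin(\theta)$ with $A = a\cos(\gamma)$ and $B = -b\sin(\gamma)$, and rewrite it in amplitude--phase form $g(\theta) = R\cos(\theta - \phi)$, where $R = \sqrt{A^{2}+B^{2}} = \dhor$ and $\phi$ satisfies $\tan(\phi) = B/A = -(b/a)\tan(\gamma)$. The maximum of $g$ over $\theta$ is then $R$, attained at $\theta = \phi$. The point requiring care is to verify that the principal branch $\phi = \arctan(-(b/a)\tan(\gamma)) \in (-\pi/2,\pi/2)$ yields the maximum and not the minimum: since $\gamma \in (-\pi/2,\pi/2)$ we have $\cos(\gamma) > 0$, hence $A = a\cos(\gamma) > 0$, so $\cos(\phi) > 0$ on this branch. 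A direct substitution using $\cos(\phi) = A/R$ and $\sin(\phi) = B/R$ then gives $g(\phi) = A\cos(\phi) + B\sin(\phi) = (A^{2}+B^{2})/R = R > 0$, confirming that $\theta_{max} = \phi$ is indeed a maximiser.

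Finally, since $g(\theta_{max}) = R > 0$ and $r$ ranges over $[0,1]$, the product $r\,g(\theta_{max})$ is maximised at $r = 1$, giving $r_{max} = 1$ and $f(r_{max},\theta_{max}) = R = \dhor$, as claimed. I expect no genuine obstacle here, the computation being elementary; the only subtlety is the branch selection for $\arctan$ in the previous step, which is settled entirely by the sign of $\cos(\gamma)$ on the admissible range of $\gamma$.
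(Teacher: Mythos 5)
Your proof is correct, and it takes a genuinely different route from the paper's. You factor $f(r,\theta)=r\,g(\theta)$ and rewrite $g(\theta)=A\cos(\theta)+B\sin(\theta)$ (with $A=a\cos(\gamma)>0$, $B=-b\sin(\gamma)$) in amplitude--phase form $R\cos(\theta-\phi)$ with $R=\sqrt{A^{2}+B^{2}}=\dhor$, so that the maximal value, the maximiser $\theta_{max}=\phi$, and the branch selection ($\cos(\phi)=A/R>0$, hence $\phi=\arctan(-(b/a)\tan(\gamma))$ lies on the principal branch) all fall out of a single identity. The paper instead proceeds by calculus: it restricts to $r=1$, argues via sign considerations on $\cos$ and $\sin$ that the maximiser must lie in $(-\pi/2,\pi/2)$, differentiates $g$ and solves $g'(\theta)=0$ to obtain the same $\arctan$ expression, and then evaluates $g(\theta_{max})$ through an explicit multi-line computation using $\cos(\arctan(x))=1/\sqrt{1+x^{2}}$ and $\sin(\arctan(x))=x/\sqrt{1+x^{2}}$. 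Your approach buys economy: the second-order/maximality check and the value computation are automatic (the paper's critical-point equation alone does not distinguish maximum from minimum, which is why it needs the preliminary sign argument), and you never have to manipulate trigonometric functions of an $\arctan$. The paper's approach is more pedestrian but requires no preliminary rewriting; both rest on the same single fact, namely $\cos(\gamma)>0$ for $\gamma\in(-\pi/2,\pi/2)$. One micro-remark: you should note explicitly that $R\geq\min(a,b)>0$, which you implicitly use both to divide by $R$ and to conclude $r_{max}=1$; this is immediate since $a,b>0$.
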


\begin{proof}
First, it is obvious that $r_{max} = 1$. Moreover, $\cos(\theta_{max})$ must be of the same parity as $\cos(\gamma)$, and $\sin(\theta)$ must be of opposite parity from $\sin(\gamma)$. As $\gamma \in (-\pi/2,\pi/2)$, we obtain that $\theta_{\max} \in (-\pi/2,\pi/2)$ too (that is, it cannot take the values $\pm \pi/2$).

The function $f_{\theta} : (-\pi/2,\pi/2) \to \rmath$ such that for all $\theta \in (-\pi/2,\pi/2)$,
\begin{equation*}
f_{\theta}(\theta) := a\cos(\theta)\cos(\gamma)-b\sin(\theta)\sin(\gamma)
\end{equation*}
is differentiable, and for all $\theta \in (-\pi/2,\pi/2)$,
\begin{equation*}
f'_{\theta}(\theta) = -a\sin(\theta)\cos(\gamma)-b\cos(\theta)\sin(\gamma).
\end{equation*}
Therefore,
\begin{align*}
 f'_{\theta}(\theta_{max}) = 0 &\Longleftrightarrow  \theta_{max} = \arctan\left(-\frac{b}{a}\tan(\gamma)\right).
\end{align*}
Moreover, since $\cos(\gamma) > 0$, we can write
\begin{align*}
&a\cos(\gamma)\cos(\theta_{max}) - b\sin(\gamma)\sin(\theta_{max}) \\
&= a\cos(\gamma)\cos\left(\arctan\left(-\frac{b}{a}\tan(\gamma)\right)\right) - b \sin(\gamma)\sin\left(\arctan\left(-\frac{b}{a}\tan(\gamma)\right)\right) \\
&= a \cos(\gamma) \frac{1}{\sqrt{1 + \frac{b^{2}}{a^{2}}\tan^{2}(\gamma)}}
+ b \sin(\gamma) \frac{\frac{b}{a}\tan(\gamma)}{\sqrt{1 + \frac{b^{2}}{a^{2}}\tan^{2}(\gamma)}} \\
&= \frac{a^{2}\cos^{2}(\gamma)}{a\cos(\gamma) \sqrt{1 + \frac{b^{2}}{a^{2}}\tan^{2}(\gamma)}}
+ \frac{1}{a\cos(\gamma)}\frac{b^{2}\sin^{2}(\gamma)}{\sqrt{1 + \frac{b^{2}}{a^{2}}\tan^{2}(\gamma)}} \\
&= \frac{a^{2}\cos^{2}(\gamma)+b^{2}\sin^{2}(\gamma)}{\sqrt{a^{2}\cos^{2}(\gamma)+b^{2}\sin^{2}(\gamma)}} \\
&= \sqrt{a^{2}\cos^{2}(\gamma)+b^{2}\sin^{2}(\gamma)},
\end{align*}
which concludes the proof.
\end{proof}

The second lemma means that the mean horizontal separation of a point in the ellipse from its centre is equal to $0$. Therefore, when a point is affected by a reproduction event, the mean horizontal separation of the centre of the corresponding ellipse from the point is equal to $0$. That is:
\begin{lem}\label{lem:ellipse_2}
Let $Z = (x+X,y+Y)$ be sampled uniformly at random in the ellipse $\bfrak_{a,b,\gamma}(z)$. Then,
\begin{equation*}
\bE[X] = 0.
\end{equation*}
\end{lem}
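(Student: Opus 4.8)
The plan is to exploit the fact that $\bfrak_{a,b,\gamma}(z)$ is the affine image of the unit disk, so that the uniform law on the ellipse is the pushforward of the uniform law on the disk. Concretely, write $z$ for the centre, let $D := \{(u,v) : u^{2}+v^{2} \leq 1\}$, and consider the affine bijection $T(u,v) = z + A_{\gamma}(au, bv)$. Its linear part $A_{\gamma}\,\mathrm{diag}(a,b)$ has constant Jacobian determinant $\det(A_{\gamma})\,ab = ab$, so $T$ carries the uniform probability measure on $D$ to the uniform probability measure on $\bfrak_{a,b,\gamma}(z) = T(D)$. Hence I may represent $Z = T(U,V)$ with $(U,V)$ uniform on $D$, which yields for the horizontal displacement
$$X = a\cos(\gamma)\,U - b\sin(\gamma)\,V.$$

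Next I would use the central symmetry of $D$: the uniform law on $D$ is invariant under $(u,v) \mapsto (-u,-v)$, so $\bE[U] = \bE[V] = 0$, and by linearity
$$\bE[X] = a\cos(\gamma)\,\bE[U] - b\sin(\gamma)\,\bE[V] = 0,$$
which is the claim. Equivalently and without computing anything, central symmetry of $D$ gives $(U,V) \stackrel{d}{=} (-U,-V)$, whence $(X,Y) \stackrel{d}{=} (-X,-Y)$ and therefore $\bE[X] = \bE[-X] = -\bE[X]$, forcing $\bE[X] = 0$.

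For readers preferring an explicit calculation, one can instead integrate directly in the $(r,\theta)$ coordinates of the defining parametrization. The change of variables $(r,\theta) \mapsto (ar\cos\theta, br\sin\theta)$ has Jacobian $abr$, and composing with the rotation $A_{\gamma}$ (Jacobian $1$) produces the area element $abr\,dr\,d\theta$ on $\bfrak_{a,b,\gamma}(z)$. Thus
$$\bE[X] = \frac{1}{V_{a,b,\gamma}} \int_{0}^{2\pi}\!\!\int_{0}^{1} \bigl(ar\cos\theta\cos\gamma - br\sin\theta\sin\gamma\bigr)\,abr\,dr\,d\theta,$$
and the $\theta$-integrals of $\cos\theta$ and $\sin\theta$ over $[0,2\pi)$ both vanish. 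There is no genuine obstacle in this lemma: the result is a direct consequence of the central symmetry of the ellipse about its centre, and the only point deserving a line of justification is that the uniform law on $\bfrak_{a,b,\gamma}(z)$ is indeed the pushforward of the uniform law on the disk, i.e. that the Jacobian of the defining affine map is constant. Once this is noted, the conclusion is immediate.
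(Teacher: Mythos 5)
Your proof is correct. Note that the paper itself gives no proof of Lemma~\ref{lem:ellipse_2} at all --- the statement closes the appendix and is left as an immediate consequence of the central symmetry of the ellipse about its centre --- so there is no argument to compare against; your write-up simply supplies the justification the authors treated as obvious. Both of your routes are sound: representing $Z = z + A_{\gamma}(aU,bV)$ with $(U,V)$ uniform on the unit disk (valid because the linear part $A_{\gamma}\,\mathrm{diag}(a,b)$ has constant Jacobian determinant $ab$, so uniform pushes forward to uniform) and then invoking $(U,V) \stackrel{d}{=} (-U,-V)$, or alternatively integrating $ar\cos\theta\cos\gamma - br\sin\theta\sin\gamma$ against the area element $abr\,dr\,d\theta$ and noting that the $\theta$-integrals of $\cos\theta$ and $\sin\theta$ over a full period vanish (here $V_{a,b,\gamma} = \pi ab$, consistent with your normalisation). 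The symmetry argument $\bE[X] = -\bE[X]$ is legitimate since $X$ is bounded, so the expectation exists; no gap remains.
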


\end{document}